\newtheorem{theorem}{Theorem}[section] 
\newtheorem{lemma}[theorem]{Lemma}     
\newtheorem{corollary}[theorem]{Corollary}
\newtheorem{proposition}[theorem]{Proposition}
\theoremstyle{definition}
\newtheorem{remark}[theorem]{Remark}
\newcommand{\n}{\mathbb{N}}
\newcommand{\z}{\mathbb{Z}}
\newcommand{\re}{\mathbb{R}}
\newcommand{\al}{\alpha}
\newcommand{\te}{\theta}
\newcommand{\la}{\lambda}
\newcommand{\g}{\gamma}
\newcommand{\ep}{\varepsilon}
\newcommand{\de}{\delta}
\begin{document}
\title[On special flows that are not isomorphic to their inverses]
{On special flows over IETs that are not isomorphic to their inverses}
\author[P. Berk \and K. Fr\k{a}czek ]{Przemysław Berk \and Krzysztof Fr\k{a}czek}

\address{Faculty of Mathematics and Computer Science\\ Nicolaus
Copernicus University\\ ul. Chopina 12/18\\ 87-100 Toru\'n, Poland} \email{zimowy@mat.umk.pl, fraczek@mat.umk.pl}
\date{\today}

\subjclass[2000]{37A10, 37E35}  \keywords{}
\thanks{Research partially supported by the Narodowe Centrum Nauki Grant
DEC-2011/03/B/ST1/00407.} \maketitle

\begin{abstract}
In this paper we give a criterion for a special flow to be not isomorphic to its inverse which  is a refine of a result in \cite{Fr-Ku-Le}.
We apply this criterion to special flows $T^f$ built over ergodic interval exchange transformations $T:[0,1)\to[0,1)$ (IETs) and under piecewise absolutely
continuous roof functions $f:[0,1)\to\re_+$. We show that for almost every IET $T$ if $f$ is absolutely continuous over exchanged intervals and has
non-zero sum of jumps then the special flow $T^f$ is not isomorphic to its inverse. The same conclusion is valid for a typical piecewise constant roof function.
\end{abstract}

\section{Introduction}\label{sec:intro}
The problem of isomorphism of  probability measure-preserving systems to their own inverse was already stated by Halmos-von Neumann in their seminal paper
\cite{Hal-vNeu}. In \cite{Hal-vNeu} the authors found a complete invariant for ergodic systems with discrete spectrum and then they applied it to prove
that any ergodic measure preserving transformation with pure point spectrum is isomorphic to its own inverse. Moreover, Halmos-von Neumann conjectured that
the same result is valid for an arbitrary measure preserving transformation. The first counter-example to this conjecture was given by Anzai in \cite{Anz},
it was so called Anzai skew product. Anzai counter-example gave the impetus for further research on the isomorphism problem of measure-preserving systems
to their inverse. As shown in \cite{dJun} (for automorphisms) and in \cite{DaRy} (for flows), the property of being non-isomorphic to the inverse is typical.
For a fairly  detailed introduction to the problem we refer also to \cite{Fr-Ku-Le}.

Recall that a measure-preserving flow $\{T_t\}_{t\in\re}$ on a standard probability Borel space $(X,\mathcal{B},\mu)$ is isomorphic to its inverse if there exists
measure-preserving automorphism $S:X\to X$, such that
\[
T_t\circ S=S\circ T_{-t}\text{ for all }t\in\re.
\]
For any ergodic measure-preserving automorphism $T:X\to X$ and  a positive integrable roof function  $f:X\to\re_+$
let us consider $X^f:=\{(x,r)\in X\times\re,0\le r<f(x)\}$. We deal with the \emph{special flow} $T^f$  on $X^f$ (see e.g.\ \cite{Co-Fo-Si}, Ch.11), that is the
flow which moves points in $X^f$ vertically upwards with unit speed and we identify the point $(x,f(x))$ with $(Tx,0)$.
If $T$ is an IET then the flows $T^f$ arise naturally as special representation of flows on compact surfaces.

In \cite{Fr-Ku-Le} the authors developed techniques to prove non-isomorphism of a flow $T^f$ to its inverse that based on
studying the weak closure of off-diagonal $3$-self-joinings. The idea of detecting non-isomorphism of a dynamical system and its inverse by studying the weak closure of off-diagonal $3$-self-joinings was already used by Ryzhikov in \cite{Ryz1}.
The tools developed in \cite{Fr-Ku-Le} were applied to the special flow built over irrational rotations and under piecewise absolutely continuous roof functions. The main aim of this paper is to extend the techniques of \cite{Fr-Ku-Le} to special flows over ergodic IETs. The paper is motivated by the desire to understand the problem of isomorphism of translation flows on translation surfaces to their own inverse. For the background material concerning translation surfaces, see \cite{Viana}. Recall that for every translation surface from any hyperelliptic component in the moduli space the vertical flow is isomorphic to its inverse. We conjecture that for a typical translation surface from any non-hyperelliptic component the vertical flow is not isomorphic to its inverse. We believe that the results of Section~\ref{sec:pieconst} lead to
the density of  translation surfaces for which the vertical flow is not isomorphic to its inverse in each such component. This gives a step toward the proof of the topological version of the conjecture.
The proof of the measure theoretical version seems to be beyond the reach of our approach.

In Section~\ref{sec:preli} we give a general background on special flows and joinings. We also recall so called off-diagonal joinings of higher rank, which serve as a main tool in latter calculations and constructions.

In Section~\ref{sec:lim} we state conditions under which a sequence of $3$-off-diagonal joinings converges weakly in the space $J_3(T^f)$ of all $3$-self-joinings and how does the limit looks like, see Theorem~\ref{glownyrach}. In \cite{Fr-Ku-Le} the authors give an explicit formula for the whole limit measure. Now under weaker assumptions the limit measure is controlled only partially. Nevertheless, it is enough for our purpose.  The proofs are based on ideas drawn from  \cite{Fr-Ku-Le} and \cite{FrLem3}.

In Section~\ref{sec:gencrit} (using results of Section~\ref{sec:lim}) we give a sufficient condition for the special flow built over partially rigid automorphisms to be not isomorphic to its inverse, see Theorem~\ref{glwynik}. This result reduces the problem
of non-isomorphism of $T^f$ and its inverse to establishing that a probability measure $\xi_*P$ on $\re$ does not satisfies a symmetry condition.

In the remainder of the paper we apply newly obtained criterion to special flows $T^f$ built over interval exchange transformations $T:[0,1)\to[0,1)$ and under piecewise
absolutely continuous roof functions $f:[0,1)\to\re_+$. The sum of jumps for such roof function will be denoted by $S(f)$.
In Section~\ref{sec:iets} we state some general background concerning IETs and Rauzy-Veech induction.

In  Section~\ref{sec:abscont} we give the proof of the following main result.
\begin{theorem}\label{thm:abscont}
Let $f:[0,1)\to\re_+$ be a piecewise absolutely continuous function with $\beta_1,\ldots,\beta_r\in [0,1)$ its
discontinuity points. For every irreducible permutation $\pi\in  S_d^0$ and almost every $\la\in \Lambda^d$ there exists $D_\la\subset[0,1)^r$ of full Lebesgue measure such that:
\begin{enumerate}
\item\label{ACn0} if $S(f)\neq 0$ and $f$ is absolutely continuous over intervals exchanged by $T_{\pi,\la}$, or
\item\label{AC0} if $S(f)=0$, $f$ has no jumps with opposite value, $r\ge 3$ and $(\beta_1,\ldots,\beta_r)\in D_\la$,
\end{enumerate}
then the special flow $T_{\pi,\la}^f$ built over the IET $T_{\pi,\la}$ and under the roof function $f$ is not isomorphic to its inverse.
\end{theorem}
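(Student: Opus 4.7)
The plan is to verify, for almost every $\la\in\Lambda^d$, the hypotheses of Theorem~\ref{glwynik} applied to the special flow $T_{\pi,\la}^f$. That theorem needs two pieces of input: partial rigidity of the base IET together with a description of a weak limit of $3$-off-diagonal joinings produced by Theorem~\ref{glownyrach}, and the failure of the symmetry condition for the resulting measure $\xi_*P$ on $\re$. For the first piece I would use Rauzy--Veech induction: for every irreducible $\pi\in S_d^0$ and almost every $\la$, Rauzy--Veech induction yields a sequence of Rokhlin castles of heights $q_n\to\infty$ whose bases $J_n$ shrink like $1/q_n$, and the classical balance estimates (Masur--Veech plus Oseledets for the Kontsevich--Zorich cocycle) guarantee that the union of levels covers a definite proportion of $[0,1)$; this is exactly the data producing a partial rigidity sequence $(q_n,J_n)$ for $T_{\pi,\la}$ in the sense required by Theorem~\ref{glwynik}.

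For case \eqref{ACn0}, since $f$ is absolutely continuous over the intervals exchanged by $T_{\pi,\la}$, the only discontinuities of the Birkhoff sums $f^{(q_n)}$ come from the returns of the orbit to the endpoints of the Rauzy--Veech castle. A Denjoy--Koksma type computation along $q_n$ shows that on most of each tower one has
\[
f^{(q_n)}(x+t)-f^{(q_n)}(x)=a_n(x)\cdot t+o(1),
\]
and the distribution of the slopes $a_n$, suitably normalised, converges weakly to a measure whose characteristic parameter is essentially $S(f)$. Plugging this into the formula from Theorem~\ref{glownyrach} identifies $\xi_*P$ as a non-trivial drift distribution on $\re$; if $\xi_*P$ were symmetric, the argument would force $S(f)=0$, contradicting the hypothesis. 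Hence the symmetry condition of Theorem~\ref{glwynik} fails and $T_{\pi,\la}^f$ is not isomorphic to its inverse.

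For case \eqref{AC0} the drift vanishes, so the asymmetry must be extracted from the relative positions of the jump discontinuities $\beta_1,\dots,\beta_r$. Here I would track, along the rigidity times $q_n$, the finite point masses that each $\beta_i$ contributes to the Birkhoff cocycle $f^{(q_n)}(x+t)-f^{(q_n)}(x)$; the limit measure $\xi_*P$ becomes a convolution-type object built from the jumps $d_i$ of $f$ and from the asymptotic positions of the images $T^j\beta_i$ inside the castles. The set $D_\la$ is then defined as the full-measure set of $(\beta_1,\dots,\beta_r)$ for which these asymptotic positions are equidistributed in a sufficiently generic way (a Weyl-type equidistribution argument in the fibre of Rauzy--Veech induction). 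The hypothesis $r\ge3$ together with the no-opposite-jumps condition ensures that the support of $\xi_*P$ cannot be globally flipped by $t\mapsto -t$, so again the symmetry condition fails.

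The main obstacle is case \eqref{AC0}: having lost the linear drift, one must carefully produce and exploit a genuinely higher-order asymmetry of $\xi_*P$. This requires both a precise description of the limiting joining from Theorem~\ref{glownyrach} (retaining enough information about the individual jumps $d_i$ at each $\beta_i$) and a full-measure genericity statement for $(\beta_1,\dots,\beta_r)$ that is uniform enough over $\la$ to yield the set $D_\la$; pinning down this genericity, together with checking that "no jumps with opposite value" prevents hidden symmetries in $\xi_*P$, is the delicate technical heart of the argument.
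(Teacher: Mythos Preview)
Your overall strategy is correct: one does verify the hypotheses of Theorem~\ref{glwynik} using Rokhlin towers coming from Rauzy--Veech induction. But several of the concrete claims you make are not what actually happens, and two genuine gaps would prevent the argument from closing.

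\medskip

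\textbf{First gap: the reduction to model roof functions.} You never isolate the decomposition $f=f_{pl}+f_{ac}$, where $f_{pl}$ is piecewise linear with slope $S(f)$ (hence piecewise constant when $S(f)=0$) and $f_{ac}$ is absolutely continuous with $\int_0^1 Df_{ac}=0$. The paper proves the required structure of $\xi_*P$ \emph{only} for $f_{pl}$ (Lemma~\ref{general}), and then needs a separate argument (Lemma~\ref{AClem}, using unique ergodicity of $T_{\pi,\la}$ and a $C^1$-approximation of $Df_{ac}$) to show that
\[
\big(f_{ac}^{(q_n)}\circ T^{q_n}-f_{ac}^{(q_n)}\big)\chi_{W_n}\to 0\quad\text{uniformly},
\]
so that $\xi_*Q=\xi_*P$ for the full roof $f$. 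Your Denjoy--Koksma sentence does not deliver this: Denjoy--Koksma bounds the oscillation of $f^{(q_n)}$, but here one needs the \emph{difference} $f^{(q_n)}\circ T^{q_n}-f^{(q_n)}$ to tend to zero on $W_n$, which is a finer statement and is exactly where the zero-mean derivative and unique ergodicity enter.

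\medskip

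\textbf{Second gap: the structure of $\xi_*P$ and the role of $r\ge 3$.} Theorem~\ref{glwynik} does not ask merely that $\xi_*P$ fail to be symmetric; it requires $\xi_*P=\sum_{i=0}^m c_i\delta_{d_i}$ to be \emph{discrete} with $d_0=0$, $d_i\neq -d_j$ for $i\neq j$, and the quantitative mass condition $\sum_{i\ge 1}c_i>(1-\alpha)/\alpha$. In case~\eqref{ACn0} the paper shows (via Lemma~\ref{GRAN}) that on $W_n$ one has $T^{q_n+j}x-T^jx=\gamma_n/q_n$ for all $0\le j<q_n$, so for the piecewise linear part with slope $s=S(f)$ one gets $f_{pl}^{(q_n)}(T^{q_n}x)-f_{pl}^{(q_n)}(x)=s\gamma_n\to s\gamma\neq 0$ \emph{exactly}, hence $\xi_*P=\delta_{s\gamma}$; choosing $\alpha>1/2$ then suffices. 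Your description via ``slopes $a_n(x)$ whose distribution converges'' is both vaguer and not what is proved. In case~\eqref{AC0} the limit $\xi_*P$ is not a ``convolution-type object'': the tower construction (using a permutation $\pi_0$ with $\pi_0(1)=d$, $\pi_0(d)=1$ and two stacked dominating towers) is engineered so that for $x\in W_n$ the interval $(T^ix,T^{q_n+i}x]$ contains at most one discontinuity $\beta_l$, whence $f_{pl}^{(q_n)}(T^{q_n}x)-f_{pl}^{(q_n)}(x)\in\{0,d_1,\ldots,d_r\}$ and $\xi_*P$ has atoms precisely at the jump values $d_l$. The full-measure set $D_\la$ is obtained not by Weyl equidistribution but by a Borel--Cantelli/King-type tower lemma (Lemma~\ref{rokhlin}) applied to auxiliary subtowers $W_n^l$. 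Finally, $r\ge 3$ is not used to rule out a global flip of the support; it enters the inequality $(r-2)s_1^n+(r-3)s_d^n>0$ that forces $\Gamma:=\lim Leb\big(\bigcup_l V_l^n\big)>1-\alpha$, which is exactly the mass condition $\sum_{i\ge 1}c_i=\Gamma/\alpha>(1-\alpha)/\alpha$. The ``no opposite jumps'' hypothesis is what gives $d_i\neq -d_j$.
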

To prove Theorem~\ref{thm:abscont} we require Lemma~\ref{general}, which consists of two parts, one concerning piecewise linear $f$ with constant non-zero slope and
other concerning piecewise constant $f$. They are proved separately in Sections~\ref{sec:overites}~and~\ref{sec:pieconst} respectively. Then a passage form piecewise linear
to piecewise absolutely continuous framework is delivered by Lemma~\ref{AClem}.

%

\section{Preliminaries}\label{sec:preli}
\subsection{Special flows}
Let $(X,\mathcal{B},\mu)$ be a standard probability Borel space. We will always assume that the measure $\mu$ is non-atomic. Suppose that
$T:(X,\mathcal{B},\mu)\to(X,\mathcal{B},\mu)$ is an ergodic automorphism. Let $\{V_n\}_{n\in\n}$ be a sequence of measurable subsets of $X$. We say that
$T$ is {\it rigid along a sequence} $\{V_n\}_{n\in\n}$ if there exists an increasing sequence of natural numbers $\{q_n\}_{n\in\n}$ such that
$\mu((T^{-q_n}A\triangle A)\cap V_n)\to 0$ for every measurable $A\subset X$. Then $\{q_n\}_{n\in\n}$ is called a {\it rigidity sequence} for $T$ along
$\{V_n\}_{n\in\n}$.

Assume that $f\in
L^1(X,\mathcal{B},\mu)$ is a strictly positive function. We will denote by $\mathcal B(\re)$ the standard
Borel $\sigma$-algebra on $\re$, while by $Leb$ we will denote the
Lebesgue measure on $\re$ or $[0,1)$ according to the context.  By
$\mathcal T^f=(T_t^f)_{t\in\re}$ we will denote the corresponding
special flow under $f$ acting on $(X^f,\mathcal B^f,\mu^f)$, where
$X^f=\{(x,r)\in X\times\re:\quad 0\le r<f(x)\}$ and $\mathcal B^f$
and $\mu^f$ are restrictions of $\mathcal B\otimes\mathcal B(\re)$
and $\mu\otimes Leb$ to $X^f$.
\begin{remark}
Since $\mu$ is non-atomic, the ergodicity of $T$ implies its aperiodicity. Therefore, the flow $T^f$ is ergodic and aperiodic, as well.
\end{remark}


\subsection{Self-joinings}
Let $\mathcal T=(T_t)_{t\in\re}$ be an ergodic flow on
$(X,\mathcal B,\mu)$. For every $k\ge 2$ by a {\it $k$-self-joining of
$\mathcal T$} we call any probability $(T_t\times\ldots \times
T_t)_{t\in\re}$-invariant measure $\la$ on $(X^k,\mathcal
B^{\otimes k})$ which projects as $\mu$ on each coordinate. We
will denote by $J_k(\mathcal T)$ the set of all $k$-self-joinings
of $\mathcal T$. We say that $k$-joining $\la$ is ergodic, if the
flow $(T_t\times\ldots \times T_t)_{t\in\re}$ is ergodic on
$(X^k,\la)$. For any $(T_t\times\ldots \times
T_t)_{t\in\re}$-invariant measure $\sigma$ we may consider its
unique {\it ergodic decomposition} $\sigma=\int_{\mathcal M_e}\nu
d\kappa(\nu)$, where $\mathcal M_e$ stands for the set of
$(T_t\times\ldots \times T_t)_{t\in\re}$-invariant ergodic
measures on $X^k$ and $\kappa$ is some probability measure on
$\mathcal M_e$.
\begin{remark}\label{rem:ergcomp}
If $\la=\int_{{\mathcal M}_e}\nu\, d\kappa(\nu)$ is the  ergodic
decomposition of a $k$-joining $\la$, then the set of measures
$\nu\in\mathcal{M}_e$ which are $k$-joinings is of full measure in
measure $\kappa$.
\end{remark}

Let $\{B_n:n\in\n\}$ be a countable family in $\mathcal B$ which is
dense in $\mathcal B$ for pseudo-metric
$d_\mu(A,B)=\mu(A\triangle B)$. Then on $J_k(\mathcal T)$ we may
consider a metric $\rho$ such that
\[
\rho(\la,\la ')=\sum_{n_1,\ldots,n_k\in\n}\frac{1}{2^{n_1+\ldots+n_k}}|\la(B_{n_1}\times\ldots\times B_{n_k})-\la '(B_{n_1}\times\ldots\times B_{n_k})|.
\]
The set $J_k(\mathcal T)$ endowed with the weak topology  derived
from this metric is compact. Moreover, a sequence of joinings
$(\la_n)_{n\in\n}$ is convergent to $\la$ in this metric whenever
$\la_n(A_1\times\ldots\times A_k)\to\la(A_1\times\ldots\times
A_k)$ for all $A_1,\ldots,A_k\in\mathcal B$.

Let $t_1,\ldots,t_{k-1}\in\re$. Then we may consider a
$k$-joining determined in following way
\[
\mu_{t_1,\ldots,t_{k-1}}(A_1\times\ldots\times A_{k-1}\times A_k)=\mu(T_{-t_1}A_1\cap\ldots\cap T_{-t_{k-1}}A_{k-1}\cap A_k),
\]
for $A_1,\ldots,A_k\in\mathcal B$. Each such joining is  called {\it
off-diagonal}. As the image of the measure $\mu$ via the map
$x\mapsto(T_{t_1}x,\ldots,T_{t_{k-1}}x,x)$, the joining
$\mu_{t_1,\ldots,t_{k-1}}$ is ergodic.
\begin{lemma}\label{suslin}
Suppose that $(T_t)_{t\in\re}$ is ergodic and aperiodic.  Then for any natural $k\ge 2$, the set $\mathcal{A}\subset J_k(\mathcal{T})$ of all
$k$-off-diagonal joinings is Borel in $J_k(\mathcal{T})$. Moreover ${h:\re^{k-1}\to\mathcal{A}}$ given by $h(t_1,\ldots,t_{k-1})=\mu_{t_1,\ldots,t_{k-1}}$
is a measurable isomorphism. $\Box$
\end{lemma}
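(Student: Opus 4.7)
The plan is to establish that $h$ is a continuous injection from the Polish space $\re^{k-1}$ into the compact metric space $J_k(\mathcal{T})$, and then to invoke the Lusin--Souslin theorem (or, equivalently, the $\sigma$-compactness of $\re^{k-1}$) to conclude that $\mathcal{A}=h(\re^{k-1})$ is Borel in $J_k(\mathcal{T})$ and $h^{-1}:\mathcal{A}\to\re^{k-1}$ is Borel-measurable.

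Continuity of $h$ should follow at once from the measurability of the flow: for every $A\in\mathcal{B}$ the map $t\mapsto T_tA$ is continuous in the pseudo-metric $d_\mu$. Hence, for any $A_1,\ldots,A_k\in\mathcal{B}$, the function
\[
(t_1,\ldots,t_{k-1})\mapsto\mu(T_{-t_1}A_1\cap\cdots\cap T_{-t_{k-1}}A_{k-1}\cap A_k)
\]
is jointly continuous on $\re^{k-1}$, which by the explicit description of the weak topology on $J_k(\mathcal{T})$ recalled above gives continuity of $h$.

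For injectivity, I would assume $\mu_{t_1,\ldots,t_{k-1}}=\mu_{s_1,\ldots,s_{k-1}}$ and, for each fixed $i\in\{1,\ldots,k-1\}$, substitute $A_j=X$ for every $j\notin\{i,k\}$ in the equality of joinings. This reduces to
\[
\mu(T_{-t_i}A_i\cap A_k)=\mu(T_{-s_i}A_i\cap A_k)\qquad\text{for all }A_i,A_k\in\mathcal{B},
\]
which forces $T_{-t_i}A_i=T_{-s_i}A_i$ modulo $\mu$ for every $A_i$, i.e.\ $T_{s_i-t_i}$ acts as the identity on the measure algebra. Aperiodicity of $(T_t)_{t\in\re}$ then yields $s_i=t_i$.

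The conclusion follows because $\re^{k-1}$ is $\sigma$-compact and $J_k(\mathcal{T})$ is Hausdorff: on every compact $K\subset\re^{k-1}$ the continuous bijection $h|_K:K\to h(K)$ is automatically a homeomorphism onto its image, so $\mathcal{A}$ is an $F_\sigma$-subset of $J_k(\mathcal{T})$ and $h^{-1}$ is Borel-measurable. The only step requiring real input is the injectivity argument, and inside it the use of aperiodicity; continuity of $h$, Borelness of the image and measurability of $h^{-1}$ all come essentially for free once injectivity is in hand, so I expect this to be the main (mild) obstacle in the proof.
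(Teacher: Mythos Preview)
Your argument is correct. The paper itself omits the proof entirely (the statement ends with $\Box$ and no proof environment follows), so there is nothing to compare against; your route via continuity of $h$, injectivity from aperiodicity, and $\sigma$-compactness of $\re^{k-1}$ is the standard one and goes through as written.
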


We will denote by $\mathcal P(\re^{k})$ the space of all probability Borel measure on $\re^k$. For every $P\in\mathcal P(\re^{k-1})$  we will deal with an
integral $k$-self joining $\int_{\re^{k-1}}\mu_{t_1,\ldots,t_{k-1}}dP(t_1,\ldots,t_{k-1})$ such that
\[
\Big (\int_{\re^{k-1}}\mu_{t_1,\ldots,t_{k-1}}dP(t_1,\ldots,t_{k-1})\Big)(A):=\int_{\re^{k-1}}\mu_{t_1,\ldots,t_{k-1}}(A)dP(t_1,\ldots,t_{k-1}),
\]
for any $A\in\mathcal B^{\otimes k}$. In this paper  we will deal
with such integral joinings as partial limits of some sequences of
off-diagonal joinings. For relevant background material concerning joinings, see \cite{Glas}.

\section{Limit theorem for off-diagonal joinings}\label{sec:lim}
Let $(T^f_t)_{t\in\re}$ be an ergodic special flow on the space
$X^f$, where $T:(X,\mathcal B,\mu)\to(X,\mathcal B,\mu)$ is an
ergodic automorphism and $f\in L^2(X,\mathcal B,\mu)$ is a roof
function such that $f\ge c$ for some $c>0$. For every $n\in\z$ let
\[
f^{(n)}(x)=
\begin{cases}
f(x)+f(Tx)+\ldots+f(T^{n-1}x)& \text{ for }n>0  \\
0 & \text{ for }n=0  \\
-(f(T^{-1}x)+\ldots+f(T^{n}x))& \text{ for } n<0.
\end{cases}
\]
For any measurable subset $W\subset X$ with $\mu(W)>0$ we will denote by $\mu_W$ the conditional measure  given by $\mu_W(A)=\mu(A|W)$ for
$A\in\mathcal{B}$. Suppose that there exists a sequence $\{W_n\}_{n\in\n}$ of measurable subsets of $X$, increasing sequences $\{q_n\}_{n\in\n}$,
$\{q_n'\}_{n\in\n}$ of natural numbers and real sequences $\{a_n\}_{n\in\n}$, $\{a_n'\}_{n\in\n}$ such that following conditions are satisfied:

\begin{equation}\label{wl1}
\mu(W_n)\to\alpha \text{ with }0<\alpha\le 1,
\end{equation}
\begin{equation}\label{wl2}
\mu(W_n\triangle T^{-1}W_n)\to 0,
\end{equation}
\begin{equation}\label{wl3}
\{q_n\}_{n\in\n}\text{ is a rigidity sequence for $T$ along }\{W_n\}_{n\in\n},
\end{equation}
\begin{equation}\label{wl4}
\{q_n'\}_{n\in\n}\text{ is a rigidity sequence for $T$ along }\{W_n\}_{n\in\n},
\end{equation}
\begin{equation}\label{wl5}
\Big\{\int_{W_n}|f_n(x)|^2d\mu(x)\Big\}_{n\in\n}\text{ is bounded for }f_n=f^{(q_n)}-a_n,
\end{equation}
\begin{equation}\label{wl6}
\Big\{\int_{W_n}|f_n'(x)|^2d\mu(x)\Big\}_{n\in\n}\text{ is bounded for }f_n'=f^{(q_n')}-a_n',
\end{equation}
\begin{equation}\label{wl7}
(f_n',f_n)_*(\mu_{W_n})\to P\text{ weakly in }\mathcal{P}(\re^2).
\end{equation}
Similar conditions were stated as assumptions of Theorem 6 in \cite{FrLem}.
By the definition of weak convergence, we know that for  any
continuous bounded function $\phi:\re^2\to\re$, we have
\begin{equation}\label{eq1}
\int_{W_n}\phi(f_n'(x),f_n(x))d\mu(x)\to\alpha\int_{\re^2}\phi(t,u)dP(t,u).
\end{equation}

The purpose of this section is to prove Theorem~\ref{glownyrach} in which the limit
joining of the sequence $\{\mu^f_{a'_n,a_n}\}_{n\in\n}$ are gained.
The proof will need the following series of auxiliary lemmas.
\begin{lemma}\label{techB}
If $\{h_n\}_{n\in\n}$ and $\{g_n\}_{n\in\n}$ are bounded sequences
in $L^\infty(X,\mathcal B,\mu)$ such that $h_n\to 0$ in measure,
then $h_n\cdot g_n\to 0$ in $L^1$. $\Box$
\end{lemma}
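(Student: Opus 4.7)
The plan is to reduce the statement to the assertion that $h_n \to 0$ in $L^1$, and then to deduce the latter from convergence in measure combined with finiteness of $\mu$ via the standard ``good set / bad set'' split.

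First I would fix $L^\infty$-bounds $M,M'$ with $\|h_n\|_\infty \le M$ and $\|g_n\|_\infty \le M'$ for all $n$. Then the trivial pointwise estimate $|h_n g_n| \le M' |h_n|$ gives $\|h_n g_n\|_{L^1} \le M' \|h_n\|_{L^1}$, so it suffices to prove that $\|h_n\|_{L^1} \to 0$.

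To establish this, fix $\varepsilon > 0$ and set
\[
A_n := \{x \in X : |h_n(x)| > \varepsilon/2\}.
\]
Since $h_n \to 0$ in measure, $\mu(A_n) \to 0$, so eventually $M \mu(A_n) < \varepsilon/2$. Splitting the integral on $A_n$ and $X\setminus A_n$ and using $\|h_n\|_\infty \le M$ together with $\mu(X)=1$ yields
\[
\int_X |h_n|\, d\mu \;\le\; M \mu(A_n) + \frac{\varepsilon}{2}\mu(X\setminus A_n) \;<\; \varepsilon
\]
for $n$ large enough, whence $\|h_n\|_{L^1} \to 0$. Combining with the first step gives $h_n g_n \to 0$ in $L^1$.

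There is essentially no obstacle here; the only feature being used is that $\mu$ is a finite (probability) measure, which allows the bound $\mu(X\setminus A_n) \le 1$ on the ``good'' part of the split. If one wanted a slicker presentation, one could instead invoke the dominated convergence theorem applied to a subsequence: convergence in measure yields a.e.-convergent subsequences of $h_n g_n$ to $0$, uniformly dominated by the constant $M M' \in L^1(\mu)$, and a standard subsequence-of-subsequence argument upgrades this to full $L^1$-convergence.
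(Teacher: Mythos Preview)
Your proof is correct. The paper does not actually give a proof of this lemma: the $\Box$ at the end of the statement signals that it is regarded as a standard fact and the proof is omitted. Your argument supplies exactly the routine verification one would expect (finite measure plus uniform $L^\infty$ bound turns convergence in measure into $L^1$ convergence), so there is nothing further to compare.
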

\begin{lemma}\label{techA}
A sequence $\{q_n\}_{n\in\n}$ is  rigid for $T$ along
$\{W_n\}_{n\in\n}$ if and only if for every $f\in L^1(X,\mathcal
B,\mu)$ we have  $\chi_{W_n}(f\circ T^{q_n}-f)\to 0$ in measure.
\end{lemma}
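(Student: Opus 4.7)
The plan is to prove the two implications separately, treating convergence in measure directly through the definition, and using density of simple functions to bridge from indicator-function tests to general $L^1$ functions.

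For the easier direction, I would assume the $L^1$-testing condition and specialize $f=\chi_A$ for an arbitrary measurable $A$. Then $\chi_{W_n}(f\circ T^{q_n}-f)$ takes only values in $\{-1,0,1\}$, and its support is precisely $W_n\cap(T^{-q_n}A\triangle A)$. Consequently, for any $\ep\in(0,1)$ the set on which $|\chi_{W_n}(f\circ T^{q_n}-f)|>\ep$ coincides with $W_n\cap(T^{-q_n}A\triangle A)$, so convergence in measure is exactly rigidity of $\{q_n\}$ along $\{W_n\}$.

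For the harder direction, assume rigidity and first establish the $L^1$-testing condition for $f=\chi_A$: as above, the convergence in measure of $\chi_{W_n}(\chi_{T^{-q_n}A}-\chi_A)$ to $0$ is immediate from $\mu(W_n\cap(T^{-q_n}A\triangle A))\to 0$. By linearity this extends to all simple functions. For general $f\in L^1$, fix $\ep>0$ and $\eta>0$, pick a simple function $s$ with $\|f-s\|_1<\eta$, and split
\[
\chi_{W_n}(f\circ T^{q_n}-f)=\chi_{W_n}((f-s)\circ T^{q_n}-(f-s))+\chi_{W_n}(s\circ T^{q_n}-s).
\]
The second summand tends to $0$ in measure by the simple-function case. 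For the first, since $T$ is measure-preserving, its $L^1$-norm is bounded by $2\|f-s\|_1<2\eta$, so by Markov's inequality the measure of the set where it exceeds $\ep$ is at most $2\eta/\ep$. Taking $\eta\to 0$ yields convergence in measure of the full expression.

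The only real subtlety is the density step: I must use that $T$ preserves $\mu$ to get $\|(f-s)\circ T^{q_n}\|_1=\|f-s\|_1$, which is essential to make the approximation uniform in $n$. Everything else is bookkeeping on the definition of convergence in measure.
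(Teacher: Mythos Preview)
Your proof is correct and follows essentially the same route as the paper: both directions hinge on the identity between $\mu\big(W_n\cap(T^{-q_n}A\triangle A)\big)$ and the support of $\chi_{W_n}(\chi_A\circ T^{q_n}-\chi_A)$, together with density of simple functions in $L^1$ and Markov's inequality. The only cosmetic differences are that the paper first upgrades to $L^1$ convergence and then applies Markov, and in the converse it invokes Lemma~\ref{techB} where you argue directly from the $\{-1,0,1\}$-valuedness of the indicator difference.
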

\begin{proof}
Note that $\mu((T^{-q_n}A\triangle A)\cap W_n)\to 0$ is equivalent to
\[
\int_{W_n}|\chi_A\circ T^{q_n}-\chi_A|d\mu\to 0.
\]
By passing to simple functions and by density of simple  functions in $L^1$,
we have that $\chi_{W_n}(f\circ T^{q_n}-f)\to 0$ in $L^1$ for all
$f\in L^1(X,\mathcal B,\mu)$. By Markov's inequality,
$\chi_{W_n}(f\circ T^{q_n}-f)\to 0$ in measure.

Conversely, suppose that $\chi_{W_n}(f\circ T^{q_n}-f)\to 0$  in
measure for every $f\in L^1(X,\mathcal B,\mu)$. If $f$ is
additionally bounded then, by Lemma~\ref{techB},
$\chi_{W_n}(f\circ T^{q_n}-f)\to 0$ in $L^1$. Taking $f=\chi_A$ we
obtain $\chi_{W_n}|\chi_A\circ T^{q_n}-\chi_A|\to 0$ in $L^1$ for
every $A\in\mathcal{B}$. This gives the rigidity of the sequence
$\{q_n\}_{n\in\n}$ along $\{W_n\}_{n\in\n}$.
\end{proof}

\begin{lemma}\label{metr}
Suppose that $(X,\mathcal B,\mu)$ is endowed with a metric $d$
generating the $\sigma$-algebra~$\mathcal B$. If $\sup_{x\in
W_n}d(T^{q_n}x,x)\to 0$, then $\{q_n\}_{n\in\n}$ is a rigidity
sequence for $T$ along $\{W_n\}_{n\in\n}$ .
\end{lemma}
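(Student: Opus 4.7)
The plan is to invoke Lemma~\ref{techA}, which reduces the task to proving $\chi_{W_n}(f\circ T^{q_n}-f)\to 0$ in measure for every $f\in L^1(X,\mathcal{B},\mu)$. I would then treat bounded uniformly continuous $f$ first and pass to general $f\in L^1$ by approximation in $L^1$.

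Suppose first that $f$ is bounded and uniformly continuous with respect to $d$. Given $\ep>0$, uniform continuity yields $\de>0$ such that $d(y,z)<\de$ forces $|f(y)-f(z)|<\ep$. The hypothesis $\sup_{x\in W_n}d(T^{q_n}x,x)\to 0$ then gives $\sup_{x\in W_n}|f(T^{q_n}x)-f(x)|<\ep$ for all large $n$, so $\chi_{W_n}(f\circ T^{q_n}-f)\to 0$ uniformly on $X$, and in particular in measure.

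For general $f\in L^1$, I would use that the metric $d$ generates $\mathcal{B}$ on a standard Borel probability space, hence bounded uniformly continuous functions form an $L^1$-dense subspace. Given $\eta>0$, pick such a $g$ with $\|f-g\|_{L^1}<\eta$ and split
$$\chi_{W_n}(f\circ T^{q_n}-f)=\chi_{W_n}(g\circ T^{q_n}-g)+\chi_{W_n}((f-g)\circ T^{q_n}-(f-g)).$$
The first term tends to zero in measure by the previous paragraph, while the second has $L^1$-norm at most $2\|f-g\|_{L^1}<2\eta$ by the $T$-invariance of $\mu$, so Markov's inequality controls its measure. Letting $\eta\to 0$ completes the argument.

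The only mildly delicate point is the density of bounded uniformly continuous functions in $L^1(\mu)$, which rests on Borel regularity of $\mu$ and is automatic in the present setting; beyond that the proof is a uniform-continuity-plus-approximation routine with no substantive obstacle.
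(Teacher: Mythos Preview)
Your argument is correct and, like the paper, reduces the claim to Lemma~\ref{techA} and a uniform-continuity-plus-smallness argument. The route differs slightly: the paper applies Lusin's theorem directly to an arbitrary $h\in L^1$, obtaining a compact set $B_\ep$ of measure at least $1-\ep/2$ on which $h$ is uniformly continuous, and then bounds $\mu(\{x\in W_n:|h(T^{q_n}x)-h(x)|\ge a\})$ by $2\mu(B_\ep^c)$. You instead first prove the statement for bounded uniformly continuous $f$ and then pass to general $L^1$ functions via $L^1$-density of that class together with Markov's inequality. Both are standard; the Lusin route avoids having to justify the density of bounded uniformly continuous functions in $L^1$ (which, as you note, is fine here since bounded Lipschitz functions approximate indicators of closed sets and any finite Borel measure on a metric space is inner regular by closed sets), while your route has the minor advantage of cleanly separating the two steps.
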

\begin{proof}
Let $h\in L^1(X,\mathcal B,\mu)$ and let $\ep>0$ and $a>0$ be arbitrary. Then, by Lusin's theorem, there exists a compact set $B_\ep\subset X$ such that
$\mu(B_\ep^c)<\frac{\ep}{2}$ and $h:B_\ep\to\re$ is uniformly continuous. Therefore, there exists $\de>0$ such that $d(x,y)<\de$ implies $|h(x)-h(y)|<a$
for all $x,y\in B_\ep$. By assumption, there exists $n_0\in\n$ such that
\[
n\ge n_0\text{ and }x\in W_n\Rightarrow d(x,T^{q_n}x)<\de.
\]
Hence,  $x\in W_n\cap B_\ep \cap T^{-q_n}B_\ep$ implies
$
|h(x)-h(T^{q_n}x)|<a
$
for $n>n_0$. Therefore
\[
\mu(\{x\in W_n:|h(x)-h(T^{q_n}x)|\ge a\})\le
\mu(W_n\cap (B_\ep^c \cup T^{-q_n}B_\ep^c))\le 2\mu(B_\ep^c)<\ep.
\]
As $\ep>0$ and $a>0$ were arbitrary and using Lemma~\ref{techA}, we complete the proof.
\end{proof}

\begin{theorem} \label{tech}
Suppose that \eqref{wl1}-\eqref{wl7} hold. Let $h$, $h':X\to \re$
be  measurable functions. Let $g\in L^\infty(X,\mathcal{B},\mu)$
and $\phi:\re^2\to \re$ be bounded and uniformly continuous. Then
\begin{equation}\label{gran}
\begin{split}
\int_{W_n}&\phi(f_n'(x)+h'(x),f_n(x)+h(x))g(x)\,d\mu(x)\\&\to
\alpha \int_{X}\int_{\re^2}\phi(t+h'(x),u+h(x))g(x)\,dP(t,u)\,d\mu(x).
\end{split}
\end{equation}
\end{theorem}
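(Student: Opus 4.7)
I would reduce~\eqref{gran} to the scalar claim that for every measurable $E\subset X$ and every bounded uniformly continuous $\psi:\re^2\to\re$,
\[
\int_{W_n\cap E}\psi(f_n'(x),f_n(x))\,d\mu(x)\longrightarrow\al\,\mu(E)\int_{\re^2}\psi(t,u)\,dP(t,u). \qquad (*)
\]
The reduction proceeds by truncating the finite-valued $h,h'$ at a large level $M$, then uniformly approximating $h,h',g$ by simple functions $\sum c_i\chi_{A_i}$, $\sum c'_j\chi_{A'_j}$, $\sum d_k\chi_{A''_k}$. Uniform continuity and boundedness of $\phi$, together with boundedness of $g$, control all approximation errors, and expanding on the common refinement $A_i\cap A'_j\cap A''_k$ reduces everything to a finite sum of applications of $(*)$ with $\psi(t,u)=\phi(t+c'_j,u+c_i)$.

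The first ingredient toward $(*)$ is an approximate $T$-invariance of $\psi(f_n',f_n)$ on $W_n$. Since $f_n\circ T-f_n=f\circ T^{q_n}-f$ (and similarly for $f_n'$), combining~\eqref{wl3}--\eqref{wl4} with Lemma~\ref{techA} applied to $f\in L^1(X,\mathcal B,\mu)$ gives $\chi_{W_n}(f_n\circ T-f_n),\chi_{W_n}(f_n'\circ T-f_n')\to 0$ in measure. Uniform continuity and boundedness of $\psi$ then yield $\|\chi_{W_n}(\psi(f_n',f_n)\circ T-\psi(f_n',f_n))\|_{L^1}\to 0$. Iterating this $k$ times and using~\eqref{wl2} to bound the change-of-variables errors $\|\chi_{T^{\pm j}W_n}-\chi_{W_n}\|_{L^1}$ for $j\le k$, I obtain that for each fixed $k\in\n$,
\[
\int_{W_n}\psi(f_n',f_n)\,\chi_E\circ T^k\,d\mu-\int_{W_n}\psi(f_n',f_n)\,\chi_E\,d\mu\longrightarrow 0\quad\text{as }n\to\infty. \qquad (\star)
\]

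The second ingredient is the mean ergodic theorem: given $\ep>0$, choose $N\in\n$ with $\|M_N-\mu(E)\|_{L^2}<\ep$ where $M_N=\tfrac1N\sum_{k=0}^{N-1}\chi_E\circ T^k$. Averaging $(\star)$ over $k=0,\ldots,N-1$ gives $\int_{W_n\cap E}\psi(f_n',f_n)\,d\mu=\int_{W_n}\psi(f_n',f_n)M_N\,d\mu+o(1)$ as $n\to\infty$. Cauchy--Schwarz with $\|\psi(f_n',f_n)\|_\infty\le\|\psi\|_\infty$ bounds the difference $|\int_{W_n}\psi(f_n',f_n)(M_N-\mu(E))\,d\mu|$ by $\|\psi\|_\infty\ep$, and~\eqref{eq1} sends the remaining $\mu(E)\int_{W_n}\psi(f_n',f_n)\,d\mu$ to $\al\mu(E)\int\psi\,dP$. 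Letting $\ep\to 0$ establishes $(*)$.

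\textbf{Main obstacle.} The delicate point is the order of limits: the rigidity--based estimates in $(\star)$ are $n\to\infty$ limits for each fixed $k$, while the mean ergodic theorem fixes a large $N$ and needs $(\star)$ for all $k<N$. Fixing $N$ \emph{first} and only then letting $n\to\infty$ works because~\eqref{wl2} ensures the $N$ change-of-variable errors all vanish jointly; one must not let $N$ depend on $n$.
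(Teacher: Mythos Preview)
Your argument is correct and rests on the same core observation as the paper: rigidity along $\{W_n\}$ forces $\chi_{W_n}\big(\psi(f_n',f_n)\circ T-\psi(f_n',f_n)\big)\to 0$ in $L^1$, and this approximate $T$-invariance lets one replace the weight ($\chi_E$ or $g$) by its mean. The paper packages this step dually: instead of averaging $\chi_E\circ T^k$ over $k<N$ and invoking the mean ergodic theorem, it uses that coboundaries $\xi-\xi\circ T$ with $\xi\in L^\infty$ are dense in $L^1_0$, and shows directly that $\int_{W_n}\psi(f_n',f_n)(\xi-\xi\circ T)\,d\mu\to 0$ via a single change of variables together with~\eqref{wl2}. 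This bypasses your iteration $(\star)$ for each $k<N$ and the order-of-limits issue you flag; conversely, your route makes the role of ergodicity more transparent and avoids the density-of-coboundaries step. The reduction to $h=h'=0$ and the passage to general measurable $h,h'$ via simple-function approximation are handled essentially the same way in both proofs.
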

\begin{proof}
We  divide the proof into steps according to the complexity of the functions $h$ and $h'$.\\

\textbf{Step 1.} Assume that $h=h'=0$. If $g$ is constant then
\eqref{gran}  follows from \eqref{eq1} directly. Hence we can
assume that $g\in L^1_0(X,\mathcal{B},\mu)$, i.e.\ has zero mean.
By the proof of von Neumann ergodic theorem, coboundaries, i.e.\
functions of the form $g=\xi-\xi\circ T$ with $\xi\in
L^2(X,\mathcal{B},\mu)$ are dense in $L^1_0(X,\mathcal{B},\mu)$.
Therefore, it suffices  to consider $g=\xi-\xi\circ T$ for $\xi\in
L^\infty(X,\mathcal{B},\mu)$, as they are also dense in
$L^1_0(X,\mathcal{B},\mu)$. Note that the RHS of \eqref{gran} is
equal to
\[
\alpha \int_{\re^2}\phi(t,u)dP(t,u)\int_{X}g(x)\,d\mu(x)=0,
\]
whenever $g\in L^1_0(X,\mathcal{B},\mu)$. As $g=\xi-\xi\circ T$, we need to prove that
\begin{equation}\label{calkakobrzeg}
\Big|\int_{W_n}\phi(f_n'(x),f_n(x))\xi(x)\,\,d\mu(x)-\int_{W_n}\phi(f_n'(x),f_n(x))\xi(Tx)\,\,d\mu(x)\Big|\to 0.
\end{equation}
However, by the $T$-invariance of $\mu$, we have
\[
\begin{split}
\Big|&\int_{W_n}\phi(f_n'(x),f_n(x))\xi(x)\,d\mu(x)-\int_{W_n}\phi(f_n'(x),f_n(x))\xi(Tx)\,d\mu(x)\Big|\\&=
\Big|\int_{T^{-1}W_n}\phi(f_n'(Tx),f_n(Tx))\xi(Tx)\,d\mu(x)-\int_{W_n}\phi(f_n'(x),f_n(x))\xi(Tx)\,d\mu(x)\Big|\\&\le
\int_{W_n}|\phi(f_n'(Tx),f_n(Tx))-\phi(f_n'(x),f_n(x))||\xi(Tx)|\,d\mu(x)\\&\quad+\|\phi\|_{\infty}\int_{T^{-1}W_n\triangle W_n}|\xi(Tx)|\,d\mu(x).
\end{split}
\]
By \eqref{wl2}, we have $\mu(T^{-1}W_n\triangle W_n)\to 0$.  Thus,
$\int_{T^{-1}W_n\triangle W_n}|\xi(Tx)|\,d\mu(x)\to 0$ as
$n\to\infty$.

Now we use the uniform continuity of $\phi$. By the definition of $f_n$ and $f_n'$, we have
\[
(f_n'(Tx),f_n(Tx))-(f_n'(x),f_n(x))=(f(T^{q_n'}x)-f(x),f(T^{q_n}x)-f(x)).
\]
By Lemma \ref{techA}, we have that
\[
\chi_{W_n}(x)(f(T^{q_n'}x)-f(x))\to 0\text{  and  }\chi_{W_n}(x)(f(T^{q_n}x)-f(x))\to 0
\]
in measure and thus
\[
\chi_{W_n}(x)\Big((f_n'(Tx),f_n(Tx))-(f_n'(x),f_n(x))\Big)\to 0
\]
in measure. Since $\phi$ is uniformly continuous, we also have
\[
\chi_{W_n}\Big(\phi(f_n'\circ T,f_n\circ T)-\phi(f_n',f_n)\Big)\to 0
\]
in measure. By Lemma~\ref{techB}, we get that
\[
\int_{W_n}|\phi(f_n'(Tx),f_n(Tx))-\phi(f_n'(x),f_n(x))||\xi(Tx)|\,d\mu(x)\to 0.
\]
This concludes the proof of \eqref{calkakobrzeg}, which also completes the proof of \eqref{gran} for $h=h'=0$.\\

\textbf{Step 2.} Now let $h'=\sum_{i=1}^{k}h'_i\chi_{A_i}$  and
$h=\sum_{j=1}^lh_j\chi_{B_j}$  be simple functions, where $A_i$
and $B_j$ for $i=1,\ldots,k$ and $j=1,\ldots,l$ make two
measurable disjoint partitions of $X$. Then
\[
\begin{split}
\int_{W_n}&\phi(f_n'(x)+h'(x),f_n(x)+h(x))g(x)\,d\mu(x)\\&
=\sum_{i=1}^{k}\sum_{j=1}^l\int_{W_n}\phi(f_n'(x)+h_i',f_n(x)+h_j)g(x)\chi_{A_i}(x)\chi_{B_j}(x)\,d\mu(x)\\&\to
\sum_{i=1}^{k}\sum_{j=1}^l\alpha\int_X\int_{\re^2}\phi(t+h_i',u+h_j)g(x)\chi_{A_i}(x)\chi_{B_j}(x)dP(t,u)\,d\mu(x)\\&
=\alpha\int_X\int_{\re^2}\phi(t+h'(x),u+h(x))g(x)dP(t,u)\,d\mu(x),
\end{split}
\]
where the convergence follows from the first step of the proof applied to functions
$(t,u)\mapsto\phi(t+h_i',u+h_j)$. This gives \eqref{gran} whenever $h$ and $h'$
are simple functions.\\

\textbf{Step 3.} All we need to show now is that for  arbitrary
measurable functions $h$ and $h'$, we can find sequences $\{h_m\}_{m\in\n}$, $\{h_m'\}_{m\in\n}$ of simple
functions such that
\begin{align*}
\int_{W_n}&\phi(f_n'(x)+h_m'(x),f_n(x)+h_m(x))g(x)\,d\mu(x)\\&
\to\int_{W_n}\phi(f_n'(x)+h'(x),f_n(x)+h(x))g(x)\,d\mu(x)
\end{align*}
and
\begin{align*}
\int_{X}&\int_{\re^2}\phi(t+h_m'(x),u+h_m(x))g(x)dP(t,u)\,d\mu(x)\\&
\to\int_{X}\int_{\re^2}\phi(t+h'(x),u+h(x))g(x)dP(t,u)\,d\mu(x),
\end{align*}
as $m\to+\infty$.
Take $h_m$ and $h_m'$ simple, such that $h_m\to h$ and  $h_m'\to
h'$ in measure. Then, by the uniform continuity of $\phi$, we obtain
that
\[
\phi(f_n'(x)+h_m'(x),f_n(x)+h_m(x))-\phi(f_n'(x)+h'(x),f_n(x)+h(x))\to 0,
\]
in measure on $(X,\mu)$ and
\[
\phi(t+h_m'(x),u+h_m(x))-\phi(t+h'(x),u+h(x))\to 0
\]
in measure on $(\re^2\times X,P\otimes\mu)$. By  Lemma
\ref{techB} and Step 2, this completes the proof of the whole
theorem.
\end{proof}

\begin{lemma}\label{tech2}
Let the assumptions \eqref{wl1}-\eqref{wl7} hold.  Furthermore,
assume that $g,\xi,\xi '\in L^\infty(X,\mathcal{B},\mu)$. Then
\[
\begin{split}
\int_{W_n}&\phi(f_n'(x)+h'(x),f_n(x)+h(x))\,g(x)\,\xi(T^{q_n}x)\,\xi '(T^{q_n'}x)\,\,d\mu(x)\\&\to
\alpha \int_{X}\int_{\re^2}\phi(t+h'(x),u+h(x))\,g(x)\,\xi(x)\,\xi '(x)\,dP(t,u)\,\,d\mu(x).
\end{split}
\]
\end{lemma}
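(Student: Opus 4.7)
The plan is to reduce Lemma~\ref{tech2} to Theorem~\ref{tech} by using the rigidity assumptions \eqref{wl3} and \eqref{wl4} to replace $\xi(T^{q_n}x)$ and $\xi'(T^{q_n'}x)$ by $\xi(x)$ and $\xi'(x)$, respectively, up to errors that vanish in $L^1$ on $W_n$. The key algebraic decomposition I would use is
\[
\xi(T^{q_n}x)\,\xi'(T^{q_n'}x)
= \xi(x)\xi'(x) + \bigl[\xi(T^{q_n}x)-\xi(x)\bigr]\xi'(T^{q_n'}x) + \xi(x)\bigl[\xi'(T^{q_n'}x)-\xi'(x)\bigr].
\]

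For the two error terms, I would argue as follows. In the first one, the factor $\phi(f_n'+h',f_n+h)\,g(x)\,\xi'(T^{q_n'}x)$ is uniformly bounded in $L^\infty$ by $\|\phi\|_\infty\|g\|_\infty\|\xi'\|_\infty$. Since $\xi\in L^\infty\subset L^1$, the rigidity assumption \eqref{wl3} and Lemma~\ref{techA} give $\chi_{W_n}\bigl(\xi\circ T^{q_n}-\xi\bigr)\to 0$ in measure. Applying Lemma~\ref{techB} to this pair of sequences, the product converges to $0$ in $L^1$, so the integral over $W_n$ of the first error term tends to $0$. The second error term is handled symmetrically, using \eqref{wl4} for $\{q_n'\}$ together with $\xi'\in L^\infty$.

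It then remains to compute the limit of the principal piece
\[
\int_{W_n}\phi\bigl(f_n'(x)+h'(x),f_n(x)+h(x)\bigr)\,g(x)\,\xi(x)\,\xi'(x)\,d\mu(x).
\]
Since $g\xi\xi'$ again lies in $L^\infty(X,\mathcal B,\mu)$, Theorem~\ref{tech} applies directly with $g$ replaced by $g\xi\xi'$, yielding convergence to
\[
\alpha\int_X\int_{\re^2}\phi\bigl(t+h'(x),u+h(x)\bigr)\,g(x)\,\xi(x)\,\xi'(x)\,dP(t,u)\,d\mu(x),
\]
which is exactly the asserted limit. Combining the principal term with the vanishing of the two error terms completes the proof.

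I do not expect any serious obstacle here: the content of the lemma is precisely that rigidity along $W_n$ lets one freely substitute $\xi\circ T^{q_n}$ by $\xi$ (and similarly for $\xi'$) inside an $L^\infty$-bounded integrand. The only point requiring care is ensuring the outer factors in each error term remain bounded in $L^\infty$ uniformly in $n$, which is automatic because $\phi$, $g$, $\xi$, $\xi'$ are all bounded.
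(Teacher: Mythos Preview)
Your proposal is correct and follows essentially the same approach as the paper: both use Lemma~\ref{techA} to get $\chi_{W_n}(\xi\circ T^{q_n}-\xi)\to 0$ and $\chi_{W_n}(\xi'\circ T^{q_n'}-\xi')\to 0$ in measure, then Lemma~\ref{techB} to kill the error terms against the bounded factor $\phi(\cdots)g(\cdots)$, and finally Theorem~\ref{tech} applied with $g\xi\xi'$ in place of $g$. The only cosmetic difference is that the paper replaces $\xi(T^{q_n}x)$ and $\xi'(T^{q_n'}x)$ in two successive steps rather than via your single three-term decomposition, which is algebraically equivalent.
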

\begin{proof}
By Lemma \ref{techA},
\[
\chi_{W_n}(x)(\xi(x)-\xi(T^{q_n}x))\to 0
\text{  and  }
\chi_{W_n}(x)(\xi '(x)-\xi '(T^{q_n'}x))\to 0,
\]
in measure. Then, by Lemma \ref{techB}, it
follows that
\begin{align*}
\Big|&\int_{W_n}\phi(f_n'(x)+h'(x),f_n(x)+h(x))g(x)\xi(T^{q_n}x)\xi '(T^{q_n'}x)\,d\mu(x)\\
&\qquad-\int_{W_n}\phi(f_n'(x)+h'(x),f_n(x)+h(x))g(x)\xi(x)\xi '(T^{q_n'}x)\,d\mu(x)\Big|\\&
\le \int_X|\phi(f_n'(x)+h'(x),f_n(x)+h(x))||g(x)||\xi '(T^{q_n'}x)||\chi_{W_n}(x)(\xi(T^{q_n}x)-\xi(x))|\,d\mu(x)\to 0,
\end{align*}
and
\begin{align*}
\Big|&\int_{W_n}\phi(f_n'(x)+h'(x),f_n(x)+h(x))g(x)\xi(x)\xi '(T^{q_n'}x)\,d\mu(x)\\&
\qquad -\int_{W_n}\phi(f_n'(x)+h'(x),f_n(x)+h(x))g(x)\xi(x)\xi '(x)\,d\mu(x)\Big|\\&\le
\int_X|\phi(f_n'(x)+h'(x),f_n(x)+h(x))||g(x)||\xi(x)||\chi_{W_n}(x)(\xi '(T^{q_n'}x)-\xi '(x))|\,d\mu(x)\to 0.
\end{align*}
Hence to conclude the proof of the lemma, we need to show that
\begin{align*}
&\int_{W_n}\phi(f_n'(x)+h'(x),f_n(x)+h(x))g(x)\xi(x)\xi '(x)\,d\mu(x)\\&\to
\alpha \int_{X}\int_{\re^2}\phi(t+h'(x),u+h(x))g(x)\xi(x)\xi '(x)dP(t,u)\,d\mu(x).
\end{align*}
However, that is a direct consequence of Theorem~\ref{tech}, taking $g(x)\xi(x)\xi '(x)$ in place of $g(x)$.
\end{proof}

The following auxiliary lemma is well-known and we state it without any proof.
\begin{lemma}\label{dodfun}
Let $g_n:X\to\re^m$, $n\in\n$ be measurable maps such that $(g_n)_*\mu\to P$ weakly in
$\mathcal{P}(\re^m)$ and  let $h_n:X\to\re^n$, $n\in\n$ be measurable maps such that $h_n\to 0$
in measure. Then $(g_n+h_n)_*\mu\to P$ weakly in
$\mathcal{P}(\re^m)$. $\Box$
\end{lemma}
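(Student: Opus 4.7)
The plan is to reduce the problem to testing against bounded uniformly continuous functions and then to exploit that $h_n\to 0$ in measure. By the Portmanteau theorem, weak convergence of probability measures on $\re^m$ is characterised by convergence of integrals against bounded uniformly continuous (indeed bounded Lipschitz) functions; hence it will suffice to show that for every bounded uniformly continuous $\phi:\re^m\to\re$,
$$\int_X\phi(g_n(x)+h_n(x))\,d\mu(x)\ \longrightarrow\ \int_{\re^m}\phi\,dP.$$

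With that reduction in place, I would compare this quantity with $\int_X\phi(g_n(x))\,d\mu(x)$, which by hypothesis on $(g_n)_*\mu$ already converges to $\int\phi\,dP$. Given $\ep>0$, choose $\de>0$ so that $|y-y'|<\de$ implies $|\phi(y)-\phi(y')|<\ep$. Because $h_n\to 0$ in measure, the set $E_n:=\{x\in X:|h_n(x)|\ge\de\}$ satisfies $\mu(E_n)\to 0$. Splitting $X=E_n\cup(X\setminus E_n)$, the integrand $|\phi(g_n+h_n)-\phi(g_n)|$ is bounded by $\ep$ on the complement and by $2\|\phi\|_\infty$ on $E_n$, which yields the estimate
$$\Big|\int_X\phi(g_n+h_n)\,d\mu-\int_X\phi(g_n)\,d\mu\Big|\ \le\ \ep+2\|\phi\|_\infty\mu(E_n).$$
Combining this with $\int_X\phi(g_n)\,d\mu\to\int\phi\,dP$ and letting $\ep\to 0$ delivers the conclusion.

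The only conceptual ingredient is the reduction to uniformly continuous test functions via Portmanteau; without it, a merely continuous bounded $\phi$ need not admit any global modulus of continuity and the uniform estimate above would fail. Once that reduction is made, the remainder is a routine $\ep$/convergence-in-measure computation, so no real obstacle is expected. An equally clean alternative would be to estimate the Prokhorov (or bounded-Lipschitz) distance between $(g_n)_*\mu$ and $(g_n+h_n)_*\mu$ directly from $\mu\{|h_n|>\ep\}\to 0$, bypassing Portmanteau at the cost of invoking the (equally standard) metrisation of weak convergence.
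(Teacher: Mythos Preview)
Your argument is correct: the reduction to bounded uniformly continuous test functions via Portmanteau, followed by the $\ep/\de$ splitting using $\mu\{|h_n|\ge\de\}\to 0$, is the standard proof of this fact. The paper itself offers no proof at all---the lemma is declared well-known and closed with a $\Box$---so there is nothing to compare against; your write-up simply supplies the omitted routine verification.
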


Let $T:(X,\mathcal{B},\mu)\to(X,\mathcal{B},\mu)$ be an ergodic automorphism and let $f:X\to\re$
be square integrable such that $f\geq c>0$. Denote by $T_{-f}:X\times\re\to X\times\re$ the skew product
$T_{-f}(x,r)=(Tx,r-f(x))$. Then for every $n\in\z$ we have $T_{-f}^n(x,r)=(T^nx,r-f^{(n)}(x))$.
Denote by $(\sigma_t)_{t\in\re}$ the flow on $X\times\re$ defined by $\sigma_t(x,r)=(x,r+t)$.

The following lemma follows directly from Lemma~3.2 in \cite{Fr-Ku-Le}.
\begin{lemma}\label{sumamiar}
For all $t,s\in\re$ and all measurable sets $A,B,C\subset X^f$ we have
\[
\mu^f\big(T^f_tA\cap T^f_sB\cap C\big)=\sum_{k,l\in\z}\mu\otimes Leb\big((T_{-f})^k\sigma_tA\cap(T_{-f})^l\sigma_sB\cap C\big).
\]
Moreover, the sets that appear on the RHS of the above equation are pairwise disjoint.
\end{lemma}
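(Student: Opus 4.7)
The plan is to use the standard representation of the special flow $T^f_t$ as the quotient of the translation flow $\sigma_t$ on $X\times\re$ by the $\z$-action generated by the skew product $T_{-f}$, with $X^f$ serving as a fundamental domain. A direct check shows that $T_{-f}$ and $\sigma_t$ commute, so the family $\{(T_{-f})^k\sigma_tA\}_{k\in\z}$ inherits the pairwise disjointness in $X\times\re$ of $\{(T_{-f})^kX^f\}_{k\in\z}$, the latter being the defining property of the fundamental domain.

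The first step is to establish the elementary identity
\[
T^f_tA=\bigsqcup_{k\in\z}\bigl((T_{-f})^k\sigma_tA\cap X^f\bigr)\quad\text{for every measurable }A\subset X^f,
\]
which is just an unwinding of the definition of the special flow: for $(x,r)\in X^f$, the point lies in $T^f_tA$ precisely when the unique $k\in\z$ with $(T_{-f})^{-k}\sigma_{-t}(x,r)\in X^f$ also satisfies $(T_{-f})^{-k}\sigma_{-t}(x,r)\in A$. Disjointness of the union is immediate from the fundamental-domain property mentioned above.

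Next, writing the analogous identity for $T^f_sB$ and intersecting with $C\subset X^f$, I would obtain
\[
T^f_tA\cap T^f_sB\cap C=\bigsqcup_{k,l\in\z}\bigl((T_{-f})^k\sigma_tA\cap(T_{-f})^l\sigma_sB\cap C\bigr),
\]
where the double-indexed family is pairwise disjoint by splitting on the cases $k_1\neq k_2$ (use disjointness of the $\sigma_tA$-family) and $k_1=k_2$, $l_1\neq l_2$ (use disjointness of the $\sigma_sB$-family). Since each set on the right-hand side is contained in $C\subset X^f$, on which $\mu^f$ agrees with the restriction of $\mu\otimes Leb$, countable additivity converts the disjoint union into the claimed series.

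The only nontrivial ingredient is the fundamental-domain decomposition $X\times\re=\bigsqcup_{k\in\z}(T_{-f})^kX^f$, which is built into the very construction of $\mu^f$ and is recorded as Lemma~3.2 of \cite{Fr-Ku-Le}; everything else is bookkeeping. I do not anticipate any real obstacle, though one must be mindful to perform the disjoint-union expansion in $X\times\re$ rather than in $X^f$, so that one may legally split the intersection before restricting via $C\subset X^f$.
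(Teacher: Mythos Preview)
Your proposal is correct and matches the paper's approach: the paper does not give an independent proof but simply states that the lemma follows directly from Lemma~3.2 in \cite{Fr-Ku-Le}, and your argument is precisely the natural unpacking of that reference via the fundamental-domain decomposition $X\times\re=\bigsqcup_{k\in\z}(T_{-f})^kX^f$ and the identity $T^f_tA=\bigsqcup_{k\in\z}\bigl((T_{-f})^k\sigma_tA\cap X^f\bigr)$.
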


\begin{lemma}[see Lemma~3.4 in \cite{Fr-Ku-Le}]\label{miaranacalke}
Suppose that $A=A_1\times A_2$, $B=B_1\times B_2$, $C=C_1\times
C_2$  are measurable rectangles in $X\times \re$. Then
\begin{multline*}
\mu\otimes Leb \big(((T_{-f})^{k_1}A)\cap((T_{-f})^{k_2}B)\cap C\big)\\=
\int_{(T^{k_1}A_1)\cap(T^{k_2}B_1)\cap C_1} Leb \Big(\big(A_2+f^{(-k_1)}(x)\big)\cap\big(B_2+f^{(-k_2)}(X)\big)\cap C_2\Big)\,d\mu(x).
\end{multline*}
\end{lemma}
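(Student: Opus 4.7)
The plan is to compute each of the iterated sets $(T_{-f})^{k_i}(A_i\times A_{i,2})$ explicitly as a ``graph'' above $T^{k_i}A_i$, substitute into the triple intersection, and then invoke Fubini. Since $T_{-f}^n(x,r)=(T^nx,r-f^{(n)}(x))$, the change of variables $y=T^{k_1}x$ (so $x=T^{-k_1}y$) gives
\[
(T_{-f})^{k_1}(A_1\times A_2)=\bigl\{(y,s)\in X\times\re : y\in T^{k_1}A_1,\ s\in A_2-f^{(k_1)}(T^{-k_1}y)\bigr\}.
\]

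The first key observation is the cocycle identity $f^{(k)}(T^{-k}y)=-f^{(-k)}(y)$ for every $k\in\z$, which follows immediately from the case-by-case definition of $f^{(n)}$ (for $k>0$ the sum $f(T^{-k}y)+\cdots+f(T^{-1}y)$ is exactly $-f^{(-k)}(y)$, and analogously for $k<0$, while $k=0$ is trivial). Using this identity the above set rewrites as
\[
(T_{-f})^{k_1}(A_1\times A_2)=\bigl\{(y,s) : y\in T^{k_1}A_1,\ s\in A_2+f^{(-k_1)}(y)\bigr\},
\]
and similarly for $(T_{-f})^{k_2}(B_1\times B_2)$. Intersecting the two sets with $C=C_1\times C_2$, a point $(y,s)$ belongs to the triple intersection if and only if $y\in(T^{k_1}A_1)\cap(T^{k_2}B_1)\cap C_1$ and $s\in(A_2+f^{(-k_1)}(y))\cap(B_2+f^{(-k_2)}(y))\cap C_2$.

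With this ``fibered'' description at hand, Fubini's theorem for $\mu\otimes Leb$ yields the claimed formula, integrating the Lebesgue measure of the fiber over the base set. The only step that requires any care is the sign bookkeeping in the cocycle identity; once that is set up the computation is purely mechanical, so I would expect no genuine obstacle and present the proof in these three short steps.
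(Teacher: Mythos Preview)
Your proof is correct: the explicit description of $(T_{-f})^{k}(A_1\times A_2)$ as the set of $(y,s)$ with $y\in T^kA_1$ and $s\in A_2+f^{(-k)}(y)$, obtained via the cocycle identity $f^{(k)}(T^{-k}y)=-f^{(-k)}(y)$, followed by Fubini, is exactly the standard argument. Note that the paper itself does not prove this lemma but simply refers to Lemma~3.4 in \cite{Fr-Ku-Le}; your argument is the natural one and is presumably what that reference contains.
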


The following theorem is inspired by Theorem~6 in \cite{FrLem} and Proposition~3.7 in \cite{Fr-Ku-Le}.
More precisely, this is a version of Proposition~3.7 in \cite{Fr-Ku-Le} in which the rigidity assumption for $T$
is replaced by (a weaker assumption) rigidity along a sequence of sets. Although the thesis of the present version is
also weaker, the theorem can be used to prove non-isomorphism to the inverse for special flows built over some not necessarily rigid  automorphisms $T$.
Recall that Proposition~3.7 in \cite{Fr-Ku-Le} works for special flows built over irrational rotations.
New version of this theorem applies to special flows over ergodic interval exchange transformations.
\begin{theorem}\label{glownyrach}
Suppose that \eqref{wl1}-\eqref{wl7} hold. Then
\begin{equation*}
\mu_{a_n',a_n}^f\to\rho=\alpha\int_{\re}\mu_{-t,-u}^fdP(t,u)+(1-\alpha)\nu\quad \text{ in }J_3(\mathcal{T}^f),
\end{equation*}
where $\nu\in J_3(\mathcal{T}^f)$.
\end{theorem}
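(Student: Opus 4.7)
The strategy is to unfold $\mu^f_{a'_n,a_n}(A\times B\times C)$ on measurable rectangles via Lemmas~\ref{sumamiar} and \ref{miaranacalke}, split the base integration into the rigid set $W_n$ and its complement, and evaluate the $W_n$-part of the limit with Theorem~\ref{tech}. Compactness of $J_3(\mathcal T^f)$ (Lemma~\ref{suslin}) lets me pass to a subsequence along which $\mu^f_{a'_n,a_n}$ converges to some $\rho$ and, on a countable dense family of rectangles, the non-negative sub-measures $\mu^f_{a'_n,a_n}|_{X^f\times X^f\times(X^f\setminus\tilde W_n)}$ converge to some $\sigma$, where $\tilde W_n:=(W_n\times\re)\cap X^f$. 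It then suffices to show that the complementary restriction $\mu^f_{a'_n,a_n}|_{X^f\times X^f\times\tilde W_n}$ converges on rectangles to $\alpha\int\mu^f_{-t,-u}\,dP(t,u)$.

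Fix a rectangle $A\times B\times C$ in $X^f$ with $A_2,B_2,C_2$ bounded intervals. Lemmas~\ref{sumamiar} and \ref{miaranacalke} give
\[
\mu^f_{a'_n,a_n}\big(A\times B\times(C\cap\tilde W_n)\big)=\sum_{k,l\in\z}\int_{T^kA_1\cap T^lB_1\cap C_1\cap W_n}Leb\big(E_{k,l,n}(x)\big)\,d\mu(x),
\]
with $E_{k,l,n}(x)=(A_2-a'_n+f^{(-k)}(x))\cap(B_2-a_n+f^{(-l)}(x))\cap C_2$. After the reindexing $k=-q'_n+\Delta$, $l=-q_n+\Delta'$, the cocycle identity rewrites $-a'_n+f^{(q'_n-\Delta)}(x)=f'_n(x)+f^{(-\Delta)}(x)-\eta_{n,\Delta}(x)$, where $\eta_{n,\Delta}(x)=f^{(\Delta)}(T^{q'_n-\Delta}x)-f^{(\Delta)}(T^{-\Delta}x)$; by rigidity \eqref{wl3} and Lemma~\ref{techA} applied to $f^{(\Delta)}\circ T^{-\Delta}\in L^1$, $\chi_{W_n}\eta_{n,\Delta}\to 0$ in measure, with the symmetric statement for $\Delta'$. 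Rigidity combined with the approximate $T$-invariance \eqref{wl2} of $W_n$ likewise yields $\mu\big((T^{\Delta-q'_n}A_1\triangle T^\Delta A_1)\cap W_n\big)\to 0$ for every fixed $\Delta$.

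Since $f\ge c>0$, $|f^{(-\Delta)}(x)|\ge c|\Delta|$, so together with the boundedness of $A_2,B_2,C_2$ and the $L^2(W_n)$-control \eqref{wl5}--\eqref{wl6} on $f_n,f'_n$ this confines the non-negligible $(\Delta,\Delta')$ to a finite window, uniformly in $n$ modulo a Chebyshev-type tail that can be made arbitrarily small. For each $(\Delta,\Delta')$ in this window, Theorem~\ref{tech} applied to $\phi(s,u)=Leb((A_2+s)\cap(B_2+u)\cap C_2)$ (bounded and Lipschitz), $g=\chi_{T^\Delta A_1\cap T^{\Delta'}B_1\cap C_1}$, $h'=f^{(-\Delta)}$, $h=f^{(-\Delta')}$ produces the limit
\[
\alpha\int_X\!\int_{\re^2}Leb\big((A_2+t+f^{(-\Delta)}(x))\cap(B_2+u+f^{(-\Delta')}(x))\cap C_2\big)\,\chi_{T^\Delta A_1\cap T^{\Delta'}B_1\cap C_1}(x)\,dP(t,u)\,d\mu(x).
\]
Summing over the finitely many contributing $(\Delta,\Delta')$ and running Lemmas~\ref{sumamiar} and \ref{miaranacalke} in reverse identifies the total with $\alpha\int_{\re^2}\mu^f_{-t,-u}(A\times B\times C)\,dP(t,u)$.

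Combining the three steps yields $\rho=\alpha\int\mu^f_{-t,-u}\,dP+\sigma$ on the dense family of rectangles. The measure $\sigma$ is non-negative as a weak limit of non-negative measures, and both $(T^f_t)^{\otimes 3}$-invariance and the marginal conditions are inherited from $\rho$ minus the explicit integral joining, so $\nu:=(1-\alpha)^{-1}\sigma$ belongs to $J_3(\mathcal T^f)$. The main obstacle I foresee is the combined rigidity/cocycle bookkeeping in the second step together with the truncation in the third: proving that $\eta_{n,\Delta}$ and the set displacement $T^{\Delta-q'_n}A_1\cap W_n\approx T^\Delta A_1\cap W_n$ can be absorbed into the limit through Lemma~\ref{techB} and the uniform continuity of $\phi$, and that the set of contributing $(\Delta,\Delta')$ admits a bound uniform in $n$ so that the finite summation and the limit may be exchanged before being reassembled through the joining identity.
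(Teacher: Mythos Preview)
Your plan follows the same architecture as the paper's proof: expand via Lemmas~\ref{sumamiar}--\ref{miaranacalke}, truncate the double sum by a Chebyshev bound coming from \eqref{wl5}--\eqref{wl6}, pass to the limit term-by-term through Theorem~\ref{tech} (the paper packages this step as Lemma~\ref{tech2}), and re-sum. The main structural difference is where you localise to $W_n$: you restrict the third factor $C$ to $\tilde W_n$, whereas the paper restricts the first two factors by intersecting $A$ and $B$ with $T^{q'_n}W_n\times\re$ and $T^{q_n}W_n\times\re$. For the remainder, you verify directly that $\sigma=\rho-\alpha\int\mu^f_{-t,-u}\,dP$ is non-negative with the correct marginals; the paper instead invokes uniqueness of the ergodic decomposition in one line. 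Both endgames are valid.

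There is, however, a gap in your truncation step, and it is precisely the obstacle you flag. In your decomposition $-a'_n+f^{(q'_n-\Delta)}(x)=f'_n(x)+f^{(-\Delta)}(x)-\eta_{n,\Delta}(x)$ you want to combine $|f^{(-\Delta)}(x)|\ge c|\Delta|$ with the $L^2(W_n)$-bound on $f'_n$ to get a Chebyshev tail. But the error $\eta_{n,\Delta}$ only tends to $0$ in measure for \emph{fixed} $\Delta$ as $n\to\infty$; you have no control on it in $L^2(W_n)$, and no control at all that is uniform in $\Delta$, so the tail estimate does not go through as written. The fix is to use the other cocycle splitting: the same quantity equals $f'_n(x)+f^{(-\Delta)}(T^{q'_n}x)$ exactly, with no error term. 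Then $|f^{(-\Delta)}(T^{q'_n}x)|\ge c|\Delta|$ forces $|f'_n(x)|\ge c|\Delta|-s$ whenever the intersection with $C_2$ is non-empty, and Chebyshev on $W_n$ gives the summable bound $D/(c|\Delta|-s)^2$, uniformly in $n$. This is exactly why the paper restricts $A,B$ rather than $C$: after expansion the integration domain carries $T^{-l}W_n$, so $f_n(T^lx)$ is automatically evaluated at a point of $W_n$ and the $L^2$ bound applies without rewriting. (A small citation slip: compactness of $J_3(\mathcal T^f)$ is stated in the preliminaries, not in Lemma~\ref{suslin}.)
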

\begin{proof}
By the compactness of $J_3(\mathcal{T}^f)$, and by passing to a subsequence, if necessary,
we have $\mu_{a_n',a_n}^f\to\rho$ in $J_3(\mathcal{T}^f)$. First we will prove
that for measurable rectangles in $X^f$
\[
A=A_1\times A_2,\quad B=B_1\times B_2,\quad C=C_1\times C_2,
\]
with $A_2,B_2,C_2\subset\re$ bounded, we have
\begin{equation}\label{czesc}
\begin{split}
\mu^f\Big(T^f_{-a_n'}&\big(A\cap(T^{q_n'}W_n\times\re)\big)\cap T^f_{-a_n}\big(B\cap(T^{q_n}W_n\times\re)\big)\cap C\Big)\\&
 \to \alpha\int_{\re^2}\mu^f\Big(T^f_tA\cap T^f_uB\cap C\Big)\,dP(t,u).
\end{split}
\end{equation}
By Lemma \ref{sumamiar},
\begin{multline*}
\mu^f\Big(T^f_{-a_n'}\big((A_1\cap T^{q_n'}W_n)\times
A_2\big)\cap T^f_{-a_n}\big((B_1\cap T^{q_n}W_n)\times B_2\big)\cap
(C_1\times C_2)\Big)\\= \sum_{k\in\z}\sum_{l\in\z}\mu\otimes Leb
\Big((T_{-f})^{-k}(T_{-f})^{-q_n'}\sigma_{-a_n'}\big((A_1\cap
T^{q_n'}W_n)\times A_2\big)\\
\cap(T_{-f})^{-l}(T_{-f})^{-q_n}\sigma_{-a_n}\big((B_1\cap
T^{q_n}W_n)\times B_2\big)\cap (C_1\times C_2)\Big).
\end{multline*}
Moreover, in view of Lemma \ref{miaranacalke},
\[
\begin{split}
a_{k,l}^n:=&\mu\otimes Leb
\Big((T_{-f})^{-k}(T_{-f})^{-q_n'}\sigma_{-a_n'}\big((A_1\cap
T^{q_n'}W_n)\times A_2\big)\\&\qquad
\cap(T_{-f})^{-l}(T_{-f})^{-q_n}\sigma_{-a_n}\big((B_1\cap
T^{q_n}W_n)\times B_2\big)\cap (C_1\times C_2)\Big)\\=& \int_{U_n} Leb
\Big(\big(A_2-a_n'+f^{(q_n'+k)}(x)\big)\cap\big(B_2-a_n+f^{(q_n+l)}(x)\big)\cap
C_2\Big)\,d\mu(x),
\end{split}
\]
where \begin{eqnarray*}
U_n&:=&T^{-q_n'-k}\big(A_1\cap T^{q_n'}W_n\big)\cap T^{-q_n-l}\big(B_1\cap T^{q_n}W_n\big)\cap C_1\\
&=&T^{-q_n'-k}A_1\cap T^{-k}W_n\cap T^{-q_n-l}B_1\cap T^{-l}W_n\cap C_1.
\end{eqnarray*}
Fix $l\in\z$. Using Lemma \ref{sumamiar} and then Lemma \ref{miaranacalke} for $A:=X\times\re$ we obtain
\[
\begin{split}
\sum_{k\in\z}a_{k,l}^n&\le\mu\otimes Leb
\Big((T_{-f})^{-l}(T_{-f})^{-q_n}\sigma_{-a_n}\big((B_1\cap T^{q_n}W_n)\times B_2\big)\cap (C_1\times C_2)\Big)\\
&\le
\int_{T^{-l}W_n} Leb \Big(\big(B_2-a_n+f^{(l+q_n)}(x)\big)\cap C_2\Big)\,d\mu(x)\\&=
\int_{T^{-l}W_n} Leb \Big(\big(B_2+f_n(T^lx)+f^{(l)}(x)\big)\cap C_2\Big)\,d\mu(x),
\end{split}
\]
where in the last equality we used the fact that
\[
f^{(l+q_n)}(x)-a_n=f^{(l)}(x)+f^{(q_n)}(T^lx)-a_n=f^{(l)}(x)+f_n(T^lx).
\]
Let now $s=diam(B_2\cup C_2)$ and $V_n=\big\{x\in T^{-l}W_n:|f_n(T^lx)+f^{(l)}(x)|\le s\big\}$.  Then
\begin{equation*}
\begin{split}
\int_{T^{-l}W_n} &Leb \Big(\big(B_2+f_n(T^lx)+f^{(l)}(x)\big)\cap C_2\Big)\,d\mu(x)\\&=
\int_{V_n} Leb \Big(\big(B_2+f_n(T^lx)+f^{(l)}(x)\big)\cap C_2\Big)\,d\mu(x)\\&\le
s\mu(V_n)\le s\mu\big(\big\{x\in W_n:|f_n(x)|\ge c|l|-s\big\}\big)\le s\frac{D}{(c|l|-s)^2},
\end{split}
\end{equation*}
where $D=\sup_{n\in\n}\int_{W_n}|f_n(x)|^2\,d\mu(x)$ and the last inequality  follows from Chebyshev's inequality.
Let $D_l=s\frac{D}{(c|l|-s)^2}$ for $l\in\z$ with $|l|>\frac{s}{c}$ and let $D_l=\mu^f(X^f)$ otherwise. Then $\sum_{k\in\z}a_{k,l}^n\le
D_l$ and $\sum_{l\in\z}D_l<\infty$. Similarly, we can find a sequence $\{D_k'\}_{k\in\z}$ such that $\sum_{k\in\z}D_k'<\infty$ and
$\sum_{l\in\z}a_{k,l}^n\le D_k'$. Hence, for every $\ep>0$ there exists $M>0$ such that
\[
\sum_{|k|\ge M}\sum_{l\in\z}a_{k,l}^n\leq\sum_{|k|\ge M} D_k'<\frac{\ep}{8}
\quad\text{
and }\quad
\sum_{|l|\ge M}\sum_{k\in\z}a_{k,l}^n\leq\sum_{|l|\ge M} D_l<\frac{\ep}{8}.
\]
 Thus
\begin{equation}\label{1ep}
\sum_{\max(|k|,|l|)\ge M}a_{k,l}^n\leq\sum_{|k|\ge M}\sum_{l\in\z}a_{k,l}^n+\sum_{|l|\ge M}\sum_{k\in\z}a_{k,l}^n<\frac{\ep}{4}.
\end{equation}
We are now going to prove that for every pair $(k,l)\in\z^2$ sequence
\begin{multline*}
a_{k,l}^n=\int_{T^{-k}W_n\cap T^{-l}W_n} \chi_{C_1}(x)\chi_{T^{-k}A_1}(T^{q_n'}x)\chi_{T^{-l}B_1}(T^{q_n}x)\\
Leb \Big(\big(A_2+f_n'(x)+f^{(k)}(T^{q_n'}x)\big)\cap \big(B_2+f_n(x)+f^{(l)}(T^{q_n}x)\big)\cap C_2\Big)\,d\mu(x)
\end{multline*}
converges. Since, by assumption \eqref{wl2}, $\mu\big((T^{-k}W_n\cap
T^{-l}W_n)\triangle W_n\big)\to 0$, it is enough to check the
convergence of the sequence
\begin{multline*}
b_{k,l}^n:=\int_{W_n} Leb \Big(\big(A_2+f_n'(x)+f^{(k)}(T^{q_n'}x)\big)\cap
\big(B_2+f_n(x)+f^{(l)}(T^{q_n}x)\big)\cap C_2\big)\\
\chi_{C_1}(x)\chi_{T^{-k}A_1}(T^{q_n'}x)\chi_{T^{-l}B_1}(T^{q_n}x)\,d\mu(x).
\end{multline*}
Let $F_n',F_n:X\to\re$ be given by
\[
F_n'(x)=f_n'(x)+f^{(k)}(T^{q_n'}x)-f^{(k)}(x) \text{  and  }
F_n(x)=f_n(x)+f^{(l)}(T^{q_n}x)-f^{(l)}(x).
\]
Then
\begin{multline*}
b_{k,l}^n=\int_{W_n} Leb \Big(\big(A_2+F_n'(x)+f^{(k)}(x)\big)\cap \big(B_2+F_n(x)+f^{(l)}(x)\big)\cap C_2\Big)\\
\chi_{C_1}(x)\chi_{T^{-k}A_1}(T^{q_n'}x)\chi_{T^{-l}B_1}(T^{q_n}x)\,d\mu(x).
\end{multline*}
By Lemma \ref{techA},
\[\chi_{W_n}(x)(f^{(k)}(T^{q_n}x)-f^{(k)}(x))\to 0\text{  and }
\chi_{W_n}(x)(f^{(l)}(T^{q_n'}x)-f^{(l)}(x))\to 0\]
in
measure. Since $(f_n',f_n)_*(\mu_{W_n})\to P$,
by Lemma~\ref{dodfun}, it implies
\[
(F_n',F_n)_*(\mu_{W_n})\to P \text{ weakly in }\mathcal{P}(\re^2).
\]
Let us apply Lemma~\ref{tech2} to
\[
\begin{split}
 \phi(t,u)&:= Leb \big((A_2+t)\cap (B_2+u)\cap C_2\big),\quad
 (h',h):=(f^{(k)},f^{(l)}),\quad
 (f_n',f_n):=(F_n',F_n),
\\ g&:=\chi_{C_1},
\quad \xi':=\chi_{T^{-k}A_1},
\quad \xi:=\chi_{T^{-l}B_1}.
\end{split}
\]
We obtain that
\begin{multline*}
b_{k,l}^n\to c_{k,l}:=\alpha\int_X\int_{\re^2} Leb \Big(\big(A_2+t+f^{(k)}(x)\big)\cap \big(B_2+u +f^{(l)}(x)\big)\cap C_2\Big)\\
 \chi_{C_1}(x)\chi_{T^{-k}A_1}(x)\chi_{T^{-l}B_1}(x)\,dP(t,u)\,d\mu(x).
\end{multline*}
By Fubini's theorem and Lemma~\ref{miaranacalke}, $c_{k,l}$ is equal to
\begin{multline*}
\alpha\int_{\re^2}\int_{T^{-k}A_1\cap T^{-l}B_1\cap C_1} Leb \Big(\big(A_2+t+f^{(k)}(x)\big)\cap \big(B_2+u+f^{(l)}(x)\big)\cap C_2\Big)
\,d\mu(x)dP(t,u)\\=
\alpha\int_{\re^2}(\mu\otimes Leb) \Big((T_{-f})^{-k}\sigma_t(A_1\times A_2)
\cap(T_{-f})^{-l}\sigma_u(B_1\times B_2)\cap (C_1\times C_2)\Big)\,dP(t,u).
\end{multline*}
By  Lemma \ref{sumamiar},
\begin{equation}\label{eq:sumce}
\sum_{k\in\z}\sum_{l\in\z}c_{k,l}=\alpha\int_{\re^2}\mu^f\big(T^f_tA\cap T^f_uB\cap C\big)dP(t,u)<\infty.
\end{equation}
Hence, by enlarging $M$ if necessary, we have
\begin{equation}\label{2ep}
\sum_{\max\{|k|,|l|\}>M}c_{k,l}<\frac{\ep}{4}.
\end{equation}
Since $a_{k,l}^n\to c_{k,l}$ for all $k,l\in\z$, we can choose $N>0$ so that
for all  $n\ge N$ and $k,l\in\z$ with $\max\{|k|,|l|\}\le
M$ we have
\[
|a_{k,l}^n-c_{k,l}|<\frac{\ep}{2(2M+1)^2}.
\]
In view of \eqref{1ep} and \eqref{2ep}, it follows that
\begin{align*}
\Big|\sum_{k,l\in\z}&a_{k,l}^n-\sum_{k,l\in\z}c_{k,l}\Big|
\le \sum_{\max\{|k|,|l|\}>M}a_{k,l}^n+\sum_{\max\{|k|,|l|\}>M}c_{k,l}+\sum_{\max\{|k|,|l|\}
\le M}|a_{k,l}^n-c_{k,l}|\le \ep.
\end{align*}
Therefore, $\sum_{k,l\in\z}a_{k,l}^n\to\sum_{k,l\in\z}c_{k,l}$ and,
by \eqref{eq:sumce}, this proves the convergence
\eqref{czesc}.

Since $\rho$ is the weak limit of  $\mu^f_{a_n',a_n}$,  by \eqref{czesc}, we get
\[
\rho(A\times B\times C)\ge\al\int_{\re^2}\mu^f_{-t,-u}(A\times B\times C)dP(t,u),
\]
for  measurable $A,B,C\subset X^f$. The proof is concluded by using the uniqueness of the ergodic decomposition of joinings.
%
\end{proof}

\section{Non-reversibility criteria for special flows}\label{sec:gencrit}
Let $T:( X,\mathcal B,\mu)\to( X,\mathcal B,\mu)$ be an ergodic automorphism. Let $(T^f_t)_{t\in\re}$ be the special flow built over $T$ and under a square
integrable roof function $f:X\to\re_+$. Suppose that conditions \eqref{wl1}-\eqref{wl7} are satisfied for a sequence $\{W_n\}_{n\in\n}$ of measurable sets,
integer sequences $\{q_n\}_{n\in\n}$, $\{q_n'\}_{n\in\n}$, real sequences $\{a_n\}_{n\in\n}$, $\{a_n'\}_{n\in\n}$, a real number $0<\al\le 1$ and a measure
$P\in\mathcal P(\re^2)$. Recall that
\[
(f_n',f_n)_*(\mu_{W_n})\to P\text{ weakly in }\mathcal{P}(\re^2).
\]
Moreover, suppose that $q_n'=2q_n$ and $a_n'=2a_n$.
Then, by Theorem  \ref{glownyrach}, we obtain the following.
\begin{corollary}
Let $P$, $\{a_n\}_{n\in\n}$, $\al$ be defined as above. Then
\begin{equation}\label{rdzen}
\mu_{2a_n,a_n}^f\to\alpha\int_{\re}\mu_{-t,-u}^fdP(t,u)+(1-\alpha)\nu,
\end{equation}
where $\nu$ is a 3-joining of the special flow $T^f$.
\end{corollary}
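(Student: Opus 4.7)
The plan is to apply Theorem~\ref{glownyrach} directly. All seven assumptions \eqref{wl1}-\eqref{wl7} are in force by hypothesis, so Theorem~\ref{glownyrach} yields
\[
\mu_{a_n', a_n}^f \to \alpha\int_\re \mu_{-t,-u}^f \, dP(t,u) + (1-\alpha)\nu
\]
in $J_3(\mathcal{T}^f)$ for some $\nu \in J_3(\mathcal{T}^f)$. I would then invoke the additional hypothesis $a_n' = 2a_n$ to rewrite the left-hand side as $\mu_{2a_n, a_n}^f$, obtaining \eqref{rdzen} verbatim.

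Since the corollary is a direct specialization of Theorem~\ref{glownyrach}, no further analysis is required, and no genuine obstacle arises. The added relations $q_n' = 2q_n$ and $a_n' = 2a_n$ play no role in establishing the present formula; their purpose is rather to set up the framework used later in the non-reversibility criterion (Theorem~\ref{glwynik}), where the two rigidity sequences must be tied together so that the algebraic structure of the limit joining becomes visible. At the level of this corollary, these extra constraints are only bookkeeping choices that specialize the limit joining of Theorem~\ref{glownyrach} to a form amenable to comparison with the corresponding limit for the time-reversed flow $(T^f_{-t})_{t\in\re}$ further on.
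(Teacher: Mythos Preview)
Your proposal is correct and matches the paper's approach exactly: the corollary is stated in the paper as an immediate consequence of Theorem~\ref{glownyrach} under the additional hypotheses $q_n'=2q_n$, $a_n'=2a_n$, with no further argument given. Your commentary on the role of these hypotheses is accurate as well.
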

We are going to show a necessary condition for $T^f$  to be
isomorphic to its inverse. Assume then, that there exists a
measure-preserving isomorphism $S:X^f\to X^f$ such that
\begin{equation}\label{eq:revers}
ST^f_t=T^f_{-t}S\quad\text{ for }\quad t\in\re.
\end{equation}
\begin{lemma}\label{rownoscmiar}
Suppose that \eqref{rdzen} and \eqref{eq:revers} are satisfied. Let us consider  the map
$\te:\re^2\to\re^2$ given by $\te(t,u)=(t,t-u)$. Then
\begin{equation}\label{czesciowy}
\alpha\int_{\re^2}\mu_{t,u}^fdP(t,u)+(1-\alpha)\rho_1=\alpha\int_{\re^2}\mu_{t,u}^f\,d\te_*P(t,u)+(1-\alpha)\rho_2,
\end{equation}
where $\rho_1,\rho_2\in J_3(T^f)$.
\end{lemma}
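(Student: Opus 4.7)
The plan is to apply two natural continuous transformations of $J_3(\mathcal{T}^f)$ to the limit relation \eqref{rdzen}, observe that each transformation produces the same sequence $\mu_{-2a_n,-a_n}^f$, and then invoke uniqueness of weak limits in $J_3(\mathcal{T}^f)$ to equate the two resulting limit joinings, which is precisely \eqref{czesciowy}.

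First I would push the convergence \eqref{rdzen} forward by the diagonal map $S\times S\times S$. Using \eqref{eq:revers} and that $S$ preserves $\mu^f$, a direct check on measurable rectangles gives
\[
(S\times S\times S)_*\mu_{t_1,t_2}^f=\mu_{-t_1,-t_2}^f
\]
for all $t_1,t_2\in\re$, and the pushforward by $S\times S\times S$ is continuous on $J_3(\mathcal{T}^f)$. Applied to \eqref{rdzen} with $(t_1,t_2)=(2a_n,a_n)$, this yields
\[
\mu_{-2a_n,-a_n}^f\to\alpha\int_{\re^2}\mu_{t,u}^f\,dP(t,u)+(1-\alpha)\rho_1,
\]
with $\rho_1:=(S\times S\times S)_*\nu\in J_3(\mathcal{T}^f)$.

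Second, I would apply the coordinate transposition $\tau\colon(x,y,z)\mapsto(z,y,x)$ on $(X^f)^3$, whose pushforward is a continuous involution of $J_3(\mathcal{T}^f)$. Using only $T^f$-invariance of $\mu^f$, a rectangle computation (shift everything by $T^f_{t_1}$ in the first coordinate) yields
\[
\tau_*\mu_{t_1,t_2}^f=\mu_{-t_1,t_2-t_1}^f,
\]
so in particular $\tau_*\mu_{2a_n,a_n}^f=\mu_{-2a_n,-a_n}^f$. Pushing \eqref{rdzen} forward by $\tau_*$ therefore produces
\[
\mu_{-2a_n,-a_n}^f\to\alpha\int_{\re^2}\mu_{t,t-u}^f\,dP(t,u)+(1-\alpha)\rho_2,
\]
with $\rho_2:=\tau_*\nu\in J_3(\mathcal{T}^f)$, and a change of variables (using the involutivity $\theta\circ\theta=\mathrm{id}$) rewrites the integral as $\int_{\re^2}\mu_{t,u}^f\,d\theta_*P(t,u)$.

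Since $J_3(\mathcal{T}^f)$ is metric, the weak limit of $\mu_{-2a_n,-a_n}^f$ is unique, so the two displayed limits must coincide, which is exactly \eqref{czesciowy}. The main obstacle is essentially bookkeeping: verifying the two transformation rules $(S\times S\times S)_*\mu_{t_1,t_2}^f=\mu_{-t_1,-t_2}^f$ and $\tau_*\mu_{t_1,t_2}^f=\mu_{-t_1,t_2-t_1}^f$ (straightforward from $ST^f_t=T^f_{-t}S$ and from $T^f$-invariance of $\mu^f$), and confirming that pushing an integral joining $\int\mu_{t,u}^f\,dP(t,u)$ forward by a continuous $\mathcal{T}^f$-equivariant measurable map commutes with the integration (clear on rectangles by dominated convergence).
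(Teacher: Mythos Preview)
Your proposal is correct and follows essentially the same route as the paper: push \eqref{rdzen} forward by $S\times S\times S$ on one hand, and by the coordinate swap $(x,y,z)\mapsto(z,y,x)$ on the other, then equate the two limits of $\mu^f_{-2a_n,-a_n}$. The only cosmetic difference is that the paper does not name the transposition $\tau$ explicitly but performs the corresponding rectangle computation $\mu^f_{-2a_n,-a_n}(A\times B\times C)=\mu^f_{2a_n,a_n}(C\times B\times A)$ directly before applying \eqref{rdzen}; your identities $\tau_*\mu^f_{t_1,t_2}=\mu^f_{-t_1,t_2-t_1}$ and $(S\times S\times S)_*\mu^f_{t_1,t_2}=\mu^f_{-t_1,-t_2}$ are exactly what underlies that computation.
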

\begin{proof}
Denote by $S_*$ the map which pushes forward  any measure on
$(X^f)^3$ by the map $S\times S\times S:(X^f)^3\to(X^f)^3$. Note
that $S_*$ maps 3-joinings into 3-joinings.
Additionally, $S_*:J_3(T^f)\to J_3(T^f)$ is continuous and affine. It is also easy to verify that $S_*\mu_{t,u}^f=\mu_{-t,-u}^f$.
%
By the continuity and affinity of $S_*$ and by \eqref{rdzen}, we have
\begin{equation}\label{cz1}
\begin{split}
S_*\mu_{2a_n,a_n}^f\to&\ \alpha\int_{\re^2}S_*\mu_{-t,-u}^fdP(t,u)+(1-\alpha)S_*\nu=
\alpha\int_{\re^2}\mu_{t,u}^fdP(t,u)+(1-\alpha)S_*\nu.
\end{split}
\end{equation}
However, by a direct computation, using again \eqref{rdzen} we obtain
\begin{equation}\label{cz2}
\begin{split}
S_*&\mu_{2a_n,a_n}^f(A\times B\times C)=\mu_{-2a_n,-a_n}^f(A\times B\times C)=
\mu^f(T^f_{2a_n}A\cap T^f_{a_n}B\cap C)\\&=\mu^f(A\cap T^f_{-a_n}B\cap T^f_{-2a_n}C)=
\mu^f_{2a_n,a_n}(C\times B\times A)\\&\to \alpha\int_{\re^2}\mu_{-t,-u}^f(C\times B\times A)
dP(t,u)+(1-\alpha)\nu(C\times B\times A)\\&=\alpha\int_{\re}\mu^f(A\cap T^f_{u}B\cap T^f_{t}C)dP(t,u)+(1-\alpha)\nu(C\times B\times A)\\&=
\alpha\int_{\re^2}\mu^f(T^f_{-t}A\cap T^f_{u-t}B\cap C)dP(t,u)+(1-\alpha)\nu(C\times B\times A)\\&=\alpha\int_{\re^2}
\mu^f_{t,t-u}(A\times B\times C)dP(t,u)+(1-\alpha)\nu(C\times B\times A)\\&=
\alpha\int_{\re^2}\mu^f_{t,u}(A\times B\times C)\,d\te_*P(t,u)+(1-\alpha)\nu(C\times B\times A).
\end{split}
\end{equation}
Now \eqref{czesciowy} follows from \eqref{cz1} and \eqref{cz2}.
\end{proof}

\begin{lemma}\label{rownosc}
Let $(T_t^f)_{t\in\re}$ be a special flow on $X^f$ isomorphic to its inverse.  Assume that \eqref{rdzen} is satisfied. Then
\begin{equation}\label{glownemiary}
\alpha P+\beta(1-\alpha)\eta_1=\alpha \te_*P+\beta(1-\alpha)\eta_2,
\end{equation}
for some measures $\eta_1,\eta_2\in\mathcal{P}(\re^2)$ and $0\leq\beta\leq 1$.
\end{lemma}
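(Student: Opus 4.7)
The plan is to start from the identity \eqref{czesciowy} provided by Lemma~\ref{rownoscmiar} and use the uniqueness of the ergodic decomposition of $3$-self-joinings to isolate the off-diagonal contribution on both sides.

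First, I would view both sides of \eqref{czesciowy} as ergodic decompositions on the space $\mathcal{M}_e$ of ergodic $3$-self-joinings of $T^f$. Since each off-diagonal joining $\mu_{t,u}^f$ is itself ergodic, the two integral terms are already ergodic decompositions, concentrated on the Borel set $\mathcal{A}\subset J_3(T^f)$ of off-diagonal joinings furnished by Lemma~\ref{suslin}. For $i=1,2$ I pick an ergodic decomposition $\rho_i=\int_{\mathcal{M}_e}\nu\,d\kappa_i(\nu)$; by Remark~\ref{rem:ergcomp}, the components are $\kappa_i$-almost surely themselves joinings. Denoting by $h:\re^2\to\mathcal{A}$ the measurable isomorphism $(t,u)\mapsto\mu_{t,u}^f$ from Lemma~\ref{suslin}, uniqueness of the ergodic decomposition then turns \eqref{czesciowy} into an equality of probability measures on $\mathcal{M}_e$:
\[
\alpha\,h_*P+(1-\alpha)\kappa_1 = \alpha\,h_*\te_*P+(1-\alpha)\kappa_2.
\]

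Next I would restrict this identity to the Borel set $\mathcal{A}$ and push it back to $\re^2$ via $h^{-1}$. Since $h_*P$ and $h_*\te_*P$ are both supported in $\mathcal{A}$ and have total mass $1$, setting $\beta_i:=\kappa_i(\mathcal{A})$ gives the equality of positive measures on $\re^2$:
\[
\alpha P+(1-\alpha)\,h^{-1}_*(\kappa_1|_{\mathcal{A}}) = \alpha\,\te_*P+(1-\alpha)\,h^{-1}_*(\kappa_2|_{\mathcal{A}}).
\]
Comparing total masses on both sides forces $\beta_1=\beta_2=:\beta$. Defining $\eta_i:=\beta^{-1}h^{-1}_*(\kappa_i|_{\mathcal{A}})$ when $\beta>0$ (and choosing $\eta_1,\eta_2$ arbitrarily when $\beta=0$, since the coefficient $\beta(1-\alpha)$ then vanishes; note that in this case the identity reduces to $\alpha P=\alpha\te_*P$ already), I obtain the required identity \eqref{glownemiary}.

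The main obstacle is the careful bookkeeping of the ergodic decomposition: one must rely on the Borel measurability of $\mathcal{A}\subset J_3(T^f)$ and on its measurable isomorphism with $\re^2$ furnished by Lemma~\ref{suslin} in order to split each $\kappa_i$ cleanly into off-diagonal and non-off-diagonal parts, and on Remark~\ref{rem:ergcomp} to guarantee that the objects being decomposed are genuine joinings. Once these measure-theoretic pieces are in place, the algebraic rearrangement and the mass comparison are essentially automatic.
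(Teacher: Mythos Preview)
Your proposal is correct and follows essentially the same route as the paper: start from \eqref{czesciowy}, pass to the ergodic decomposition using Remark~\ref{rem:ergcomp}, invoke the measurable isomorphism $h:\re^2\to\mathcal{A}$ from Lemma~\ref{suslin}, restrict to $\mathcal{A}$, compare masses to get $\beta_1=\beta_2$, and push back via $h^{-1}$. The only cosmetic difference is that the paper splits each $\kappa_i$ into its $\mathcal{A}$- and $\mathcal{A}^c$-parts before comparing, whereas you first obtain the full identity $\alpha\,h_*P+(1-\alpha)\kappa_1=\alpha\,h_*\te_*P+(1-\alpha)\kappa_2$ on $\mathcal{M}_e$ and then restrict; the content is identical.
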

\begin{proof}
In view of Lemma~\ref{rownoscmiar}, \eqref{czesciowy} holds.
Let $\rho_1$ and $\rho_2$ be measures given by \eqref{czesciowy}. Let
\begin{equation}\label{eq:ergcompp}
\begin{split}
&\rho_1=\int_{J_3(T^f)}\eta \,d\kappa_1(\eta)=\beta\int_\mathcal{A}\eta
\,d(\kappa_1)_\mathcal{A}(\eta)+(1-\beta)\int_{\mathcal{A}^c}\eta\, d(\kappa_1)_{\mathcal{A}^c}(\eta)\\
&\rho_2=\int_{J_3(T^f)}\eta\, d\kappa_2(\eta)=\beta '\int_\mathcal{A}\eta\, d(\kappa_2)_\mathcal{A}(\eta)+
(1-\beta ')\int_{\mathcal{A}^c}\eta\, d(\kappa_2)_{\mathcal{A}^c}(\eta),
\end{split}
\end{equation}
be ergodic decompositions, where $\kappa_1,\kappa_2$ are probability
measures on $J_3(T^f)$, cf.\ Remark~\ref{rem:ergcomp}.
Let $\mathcal{A}\subset J_3(T^f)$ be the set of all 3-off-diagonal joinings and  let
\[
h:\re^2\to\mathcal{A},\quad h(t,u)=\mu^f_{t,u}.
\]
By Lemma \ref{suslin}, $\mathcal A$ is measurable and $h$ is a measurable isomorphism. Hence we can write
\[
\int_{\re^2}\mu_{t,u}^f\,dP(t,u)=\int_{\mathcal{A}}\eta \, d(h_*P)(\eta)
\text{
and }
\int_{\re^2}\mu_{t,u}^f\,d\te_*P(t,u)=\int_{\mathcal{A}}\eta\,  d(h_*\te_*P)(\eta).
\]
By \eqref{czesciowy}, \eqref{eq:ergcompp} and
by the uniqueness of
ergodic decomposition, it follows that
\[
\alpha\int_{\mathcal{A}}\eta\, d(h_*P)(\eta)+\beta(1-\alpha)\int_\mathcal{A}\eta
\,d(\kappa_1)_\mathcal{A}(\eta)=\alpha\int_{\mathcal{A}}\eta\, d(h_*\te_*P)(\eta)+\beta'(1-\alpha)\int_\mathcal{A}\eta\, d(\kappa_2)_\mathcal{A}(\eta),
\]
and hence $\beta=\beta '$. Again, by uniqueness of ergodic decomposition, we obtain that
\begin{equation*}
\alpha h_*P+\beta(1-\alpha)(\kappa_1)_\mathcal{A}=\alpha h_*\te_*P+\beta(1-\alpha)(\kappa_2)_\mathcal{A}.
\end{equation*}
Applying $(h^{-1})_*$ to this equality (recall that $h$  is a
measurable isomorphism), we conclude the proof.
\end{proof}

Let us consider
\[
\xi:\re^2\to\re,\quad \xi(x,y)=x-2y.
\]
Recall that
\[
P=\lim_{n\to \infty}(f_n',f_n)_*\mu_{W_n},
\]
where $f_n=f^{(q_n)}-a_n$ and $f_n'=f^{(2q_n)}-2a_n$. Then for every $x\in X $ we have
\begin{equation*}
\begin{split}
\xi\circ(f_n',f_n)(x)&=f_n'(x)-2f_n(x)=\sum_{i=0}^{2q_n-1}f(T^ix)-2a_n-2\Big(\sum_{i=0}^{q_n-1}f(T^ix)-a_n\Big)\\&
=\sum_{i=q_n}^{2q_n-1}f(T^ix)-\sum_{i=0}^{q_n-1}f(T^ix)=f^{(q_n)}(T^{q_n}x)-f^{(q_n)}(x).
\end{split}
\end{equation*}
It follows that
\begin{equation}\label{eq:xip}
\xi_*P=\lim_{n\to \infty}(\xi\circ(f_n',f_n))_*\mu_{W_n}=\lim_{n\to \infty}(f^{(q_n)}\circ T^{q_n}-f^{(q_n)})_*\mu_{W_n}.
\end{equation}

\begin{theorem}\label{glwynik}
Let $T:( X,\mathcal B,\mu)\to( X,\mathcal B,\mu)$ be an ergodic automorphism and let $f:X\to\re_+$ be a square integrable roof function. Suppose that there
exists a sequence $\{W_n\}_{n\in\n}$ of measurable sets, an increasing sequence $\{q_n\}_{n\in\n}$ of natural numbers, a real sequence $\{a_n\}_{n\in\n}$,
a real number $0<\al\le 1$ and a measure $P\in\mathcal P(\re^2)$ so that conditions \eqref{wl1}-\eqref{wl7} are satisfied with $q_n'=2q_n$ and $a_n'=2a_n$.
Assume that $\xi_*P=\sum_{i=0}^mc_i\de_{d_i}$ is a discrete measure with $d_0=0$, $\sum_{i=1}^mc_i>\frac{1-\al}{\al}$ and $d_i\neq-d_j$ for $i\neq j$. Then
the special flow $T^f$ is not isomorphic to its inverse.
\end{theorem}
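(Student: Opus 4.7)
The plan is to derive a contradiction from the assumption that $T^f$ is isomorphic to its inverse. Under this hypothesis, Lemma~\ref{rownosc} supplies probability measures $\eta_1,\eta_2\in\mathcal{P}(\re^2)$ and $\beta\in[0,1]$ with
$$\al P+\beta(1-\al)\eta_1=\al\,\te_*P+\beta(1-\al)\eta_2.$$
The idea is to collapse this two-dimensional identity into a one-dimensional one that sees only the known discrete measure $\xi_*P$. The key algebraic observation that makes this possible is that $\xi$ intertwines $\te$ with negation on $\re$: a direct calculation gives
$$\xi\circ\te(t,u)=t-2(t-u)=-(t-2u)=-\xi(t,u),$$
so that $\xi_*\te_*P=R_*\xi_*P$ where $R(x)=-x$.

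With this identity in hand, I would push the equality from Lemma~\ref{rownosc} forward by $\xi$ and rearrange to obtain
$$\al\bigl(\xi_*P-R_*\xi_*P\bigr)=\beta(1-\al)\bigl(\xi_*\eta_2-\xi_*\eta_1\bigr).$$
Substituting $\xi_*P=\sum_{i=0}^m c_i\de_{d_i}$ the left-hand side becomes $\al\sum_{i=0}^m c_i(\de_{d_i}-\de_{-d_i})$. The $i=0$ term cancels since $d_0=0$, and the hypothesis $d_i\neq-d_j$ for $i\neq j$ (together with $d_i\neq 0=d_0$ for $i\ge 1$, which forces $d_i\neq-d_i$) ensures that the sets $\{d_i\}_{i\ge 1}$ and $\{-d_i\}_{i\ge 1}$ are disjoint. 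Hence the Hahn decomposition of the left-hand signed measure is explicit: its positive part is $\al\sum_{i=1}^m c_i\de_{d_i}$, with total mass $\al\sum_{i=1}^m c_i$.

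To finish, I would compare total variations. The positive part of the right-hand side is dominated by $\beta(1-\al)\xi_*\eta_2$, whose total mass is at most $1-\al$. Therefore $\al\sum_{i=1}^m c_i\le 1-\al$, contradicting the assumption $\sum_{i=1}^m c_i>(1-\al)/\al$. The only genuinely conceptual step is spotting the symmetry relation $\xi\circ\te=-\xi$, which is precisely what singles out $\xi(x,y)=x-2y$ as the correct linear functional to detect the reversal asymmetry encoded by $\te$; once that is in place the rest is a mass-balance argument whose success hinges entirely on the disjoint-support condition $d_i\neq-d_j$, which is what forbids any cancellation in the positive part of the left-hand side.
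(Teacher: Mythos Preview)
Your argument is correct and follows essentially the same approach as the paper's: both assume reversibility, invoke Lemma~\ref{rownosc}, push forward by $\xi$ using the key identity $\xi\circ\te=-\xi$, and finish with a mass-balance contradiction. The only cosmetic difference is in the final bookkeeping---the paper normalizes both sides to probability measures and counts atoms to exceed total mass~$1$, whereas you rearrange into a signed-measure equality and compare the total mass of the positive parts via the minimality of the Jordan decomposition; your version is slightly slicker but mathematically equivalent.
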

\begin{proof}
Suppose that, contrary to our claim, $T^f$ is isomorphic to its inverse. Then, in view of Lemma~\ref{rownosc}, \eqref{glownemiary} is valid. Next note
that
\[
\xi\circ\te(t,u)=2u-t=-\xi(t,u).
\]
Hence, $ \xi_*(\te_*P)=\sum_{i=0}^mc_i\de_{-d_i}$. Applying $\xi_*$ to \eqref{glownemiary} we obtain
\[
\alpha \sum_{i=0}^mc_i\de_{d_i}+\beta(1-\alpha)\xi_*\eta_1=\alpha \sum_{i=0}^mc_i\de_{-d_i}+\beta(1-\alpha)\xi_*\eta_2
\]
with $0\leq \beta\leq 1$.
After normalization this gives
\[
\frac{\alpha}{\alpha+\beta(1-\al)} \sum_{i=0}^mc_i\de_{d_i}+\frac{\beta(1-\alpha)}{\alpha+\beta(1-\al)} \xi_*\eta_1=\frac{\alpha}{\alpha+\beta(1-\al)}
\sum_{i=0}^mc_i\de_{-d_i}+\frac{\beta(1-\alpha)}{\alpha+\beta(1-\al)} \xi_*\eta_2.
\]
The LHS probability measure has non-zero atoms at $d_i$, $i=1,\ldots,m$ and the sum of their measures is no less than
\[\frac{\alpha}{\alpha+\beta(1-\al)}\sum_{i=1}^mc_i>\frac{\alpha}{\alpha+\beta(1-\al)}\frac{1-\al}{\al}
=\frac{1-\alpha}{\alpha+\beta(1-\al)}\geq \frac{\beta(1-\alpha)}{\alpha+\beta(1-\al)}.\] The RHS probability measure has atoms at $-d_i$, $i=0,\ldots,m$
and the sum of their masses is no less than $1-\frac{\beta(1-\alpha)}{\alpha+\beta(1-\al)}$. By assumption, the union of these atoms  is disjoint from the
set of atoms $\{d_i:i=1,\ldots,m\}$. It follows that the LHS measure has at least $2m+1$ different atoms with total mass greater than $1$. This yields a
contradiction, and hence the proof is complete.
\end{proof}

\section{Interval Exchange Transformations}\label{sec:iets}
On the interval $I=[0,1)$ we consider the standard  Lebesgue measure $Leb$. Denote by $S_d$ ($d\ge 2$) the set of all permutations of $d$ elements. Let
$\Lambda^d=\{\la\in\re^d_+:|\la|=1\}$ be the standard unit simplex, where $|\la|=\sum_{i=1}^d\la_i$ is the length of a vector $\la$.  For $\la\in\re^d_+$
let $I_k=[\sum_{j<k}\la_k,\sum_{j\le k}\la_k)$ for $k=1,\ldots,d$.  For every $\la\in\re^d_+$ and $\pi\in S_d$ the {\it interval exchange transformation}
(IET) $T_{\pi,\la}:[0,|\la|)\to[0,|\la|)$ is a map that rearranges intervals $I_k$, $k=1,\ldots,d$ according to permutation $\pi$. More precisely,
$T_{\pi,\la}$ acts  on  each $I_k$ as the translation by $\sum_{ \pi(j)<k}\lambda_j-\sum_{j<k}\lambda_j$. Note that $T_{\pi,\la}$ preserves $Leb$. For
relevant background material concerning IETs, see  \cite{Veech0}, \cite{Veech1} or \cite{Viana}.

In this section we give a background material on IETs that will be necessary in Sections~\ref{sec:overites}~and~\ref{sec:pieconst} to construct a sequence
of sets $W_n$ for interval exchange transformation such that the assumptions of Theorem \ref{glwynik} are met. The constructions will be based on the
construction made by Veech in \cite{Veech3} (see also \cite{Katok}).

Let $T_{\pi,\la}:[0,1)\to [0,1)$ be an ergodic interval exchange transformation of $d$ intervals given by a permutation $\pi\in S_d$, and a length vector
$\la=(\la_1,\ldots,\la_d)\in \Lambda^d$. Then $\pi$ is irreducible, i.e.\
\[
\pi(\{1,\ldots,k\})=\{1,\ldots,k\}\text{  for some  }k\in\{1,\ldots,\,d\}\quad\Rightarrow\quad k=d.
\]
Denote by $S_d^0$  the set of all irreducible permutations in $S_d$. Let $I_j$, $j=1,\ldots, d$ be the exchanged intervals and by $\partial I_j$ we will
denote the left endpoint of the interval $I_j$, i.e.\ $\partial I_j=\sum_{i<j}\la_i$. We say that $T_{\pi,\la}$ satisfies {\it Keane's condition}, if
\[
T_{\pi,\la}^k(\partial I_i )=\partial I_j\text{ for some
}k\in\n\text{ and }i,j\in\{1\ldots \,d\}\Rightarrow k=1\text{ and
} j=1.
\]
\begin{theorem}[see \cite{Keane} and \cite{Veech2}]
If $\pi\in S_d^0$  then for a.e.\ $\la\in\Lambda^d$ the interval exchange transformation $T_{\pi,\la}$  satisfies Keane's condition and is ergodic.
\end{theorem}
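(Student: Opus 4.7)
The plan is to split the statement into its two independent assertions and handle them separately. The Keane condition part is a relatively soft measure-theoretic statement about length vectors, while the ergodicity part is substantially deeper and is the main content of Veech's theorem on Rauzy--Veech induction.

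For the Keane condition, I would first fix $\pi\in S_d^0$ and rewrite the event $T_{\pi,\la}^k(\partial I_i)=\partial I_j$ as an affine condition on $\la$. Indeed, $T_{\pi,\la}$ acts on each continuity interval as a translation by a quantity of the form $\sum_{s} \varepsilon_s \la_s$ with $\varepsilon_s\in\{-1,0,1\}$, so as long as the forward orbit of $\partial I_i$ does not hit a discontinuity before time $k$, we have $T_{\pi,\la}^k(\partial I_i)=\sum_s m_s(k,i)\la_s$ for an integer vector $m(k,i)\in\z^d$ depending on the combinatorics of the orbit. The equation $T_{\pi,\la}^k(\partial I_i)=\partial I_j$ then defines a hyperplane $H_{k,i,j}\cap\Lambda^d$ of positive codimension inside $\Lambda^d$, unless the combinatorial coefficients collapse in a way that makes the equation trivially true; a short combinatorial check (essentially the same one Keane performs in \cite{Keane}) rules this out when $\pi$ is irreducible and $(k,j)\ne(1,1)$. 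Taking a countable union over $k\in\n$ and $i,j\in\{1,\ldots,d\}$, the set of length vectors failing Keane's condition has Lebesgue measure zero in $\Lambda^d$. The only delicate point is organizing the enumeration so that one does not count cases where the orbit of $\partial I_i$ first meets a discontinuity before time $k$; this is handled by splitting into finitely many combinatorial types at each length $k$.

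For the ergodicity part, I would invoke the Rauzy--Veech renormalization scheme whose combinatorial background the authors set up in Section~\ref{sec:iets}. The idea is to realize almost every IET satisfying Keane's condition as a point in a section for a renormalization map $\mathcal{R}$ acting on a space of zippered rectangles (or, projectively, on $\Lambda^d$ within each Rauzy class of $\pi$). One then shows: (i) $\mathcal{R}$ preserves an equivalent probability measure on the projectivized parameter space (Veech's invariant measure); (ii) $\mathcal{R}$ is ergodic with respect to this measure; (iii) from (i) and (ii), by a Hopf-type argument tying the acceleration $\mathcal{R}$ to the IET orbit via a Kakutani tower, unique ergodicity of $T_{\pi,\la}$ follows for a.e.\ $\la$; in particular $T_{\pi,\la}$ is ergodic with respect to $Leb$. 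Combining with the measure-zero exceptional set from the first half yields the full conclusion.

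The main obstacle is clearly step~(ii) above, Veech's ergodicity of the renormalization. A clean route is to exhibit, within each Rauzy class, a finite word in the induction alphabet whose associated matrix is strictly positive; positivity of this matrix provides the Perron--Frobenius-type contraction needed to run the Hopf argument and deduce ergodicity of $\mathcal{R}$ on the appropriate invariant measure. Since the paper only quotes this theorem as background, I would not reproduce Veech's full argument but cite \cite{Veech2} directly for step~(ii), while giving the elementary Fubini/measure-zero argument in full for the Keane part and sketching the reduction unique-ergodicity $\Rightarrow$ ergodicity for step~(iii).
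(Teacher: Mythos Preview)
The paper does not prove this theorem at all: it is stated as a background result with the attribution ``see \cite{Keane} and \cite{Veech2}'' and no argument is given. So there is no proof in the paper to compare your proposal against; the authors simply import the statement as a known fact.

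Your outline is a reasonable summary of how the cited references establish the result. The Keane-condition half is exactly the standard argument: each forbidden coincidence $T_{\pi,\la}^k(\partial I_i)=\partial I_j$ cuts out an affine hyperplane in $\Lambda^d$, and a countable union of such has measure zero. For the ergodicity half you correctly identify that the depth lies in Veech's analysis of the renormalization dynamics in \cite{Veech2}, and that a.e.\ unique ergodicity (hence ergodicity for $Leb$) is what one ultimately extracts. One small inaccuracy: in step~(i) you say $\mathcal{R}$ preserves an equivalent \emph{probability} measure on the projectivized parameter space, but Veech's invariant measure (the one recalled in the paper as Theorem~\ref{rauzy}) is only $\sigma$-finite; the existence of a finite invariant measure is a later result of Zorich and is not needed here. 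This does not affect the logic of your sketch, since ergodicity and recurrence for the $\sigma$-finite measure are what Veech actually proves and are sufficient for the argument.
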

From now on for any interval $J$ we will denote by $|J|$ its length. Let $s=\min\big\{|I_d|,|I_{\pi^{-1}(d)}|\big\}$. If $|I_d|\neq |I_{\pi^{-1}(d)}|$ then
we can consider the first return map to the interval $[0,1-s)$. Note that newly obtained automorphism is also an interval exchange of $d$ subintervals of
$[0,1-s)$ which is given by a pair $(\pi^1,\la^1)\in S_d^0\times\re^d_+$. The map
\[(\pi,\la)\mapsto(\pi^1(\pi,\la),\la^1(\pi,\la))=(\pi^1,\la^1)\]
is called {\it the Rauzy-Veech induction} and we will denote it by $\widehat R$. If possible we can iterate Rauzy-Veech induction. If $T_{\pi,\la}$ is an
interval exchange transformation satisfying Keane's condition then for each $n\in \n$ the $n$-th Rauzy-Veech induction is well defined.

Let $(\pi^n,\la^n)=\widehat R^n(\pi,\la)$. We will denote by $I^n$ the interval obtained after $n$-th step of Rauzy-Veech induction and by $I^n_j$  the
$j$-th exchanged intervals in the newly obtained IET for $j=1,\ldots,d$.

Denote by $A^n(\pi,\la)=A^n=[A^n_{ij}]_{i,j=1,\ldots, d}$ the $n$-th Rauzy-Veech induction matrix (or shortly Rauzy's matrix), i.e.\ $A^n_{ij}$ is the
number of times that the interval $I^n_j$ visits the interval $I_i$ under iterations of $T_{\pi,\la}$ before its first return to $I^n$. Recall that
$A^n(\pi,\la)\in SL_d(\mathbb Z)$, see \cite{Ra}.

\begin{remark}\label{rozkladnawieze}
Let $T=T_{\pi,\la}:I\to I$ be an interval exchange transformation of $d$ intervals satisfying Keane's condition. Then for every $n\in\n$ (cf.\ Lemma~4.2 in
\cite{Viana}) the interval $I$ is decomposed into $d$ Rokhlin towers of the form $\{T^iI^n_j:0\leq i<s_j^n\}$, where $s_j^n:=\sum_{i=1}^dA_{ij}^n(\pi,\la)$
for $j=1,\ldots,d$. More precisely,  $T^iI^n_j$ for $i=0,\ldots, s_j^n-1$ and $j=1,\ldots,d$ are pairwise disjoint intervals. Moreover, every interval
$T^iI^n_j$ is contained in an interval $I_k$ and $T^{s_j^n}I^n_j\subset I^n$. It follows that $T$ acts on each interval $T^iI^n_j$ as a translation and
$s_j^n$ is the first return time of interval $I_j^{n}$ to $I^{n}$ under $T$.
\end{remark}
\begin{proposition}[see \cite{Veech0}]\label{proposition}
Let $(\pi,\la)\in S_d^0\times \Lambda^d$ be such that $T_{\pi,\la}$ satisfies Keane's condition. Then
\begin{enumerate}
\item $A^n(\pi,\la)\la^n=\la$;
\item $A^n(\pi,\la)=A^1(\pi^0,\la^0)\cdot\ldots\cdot A^1(\pi^{n-1},\la^{n-1})$ with $(\pi^0,\la^0)=(\pi,\la)$;
\item there exists $n\in \n$ such that $A^n(\pi,\la)$ is strictly positive and $\pi^n=\pi$.
\end{enumerate}
\end{proposition}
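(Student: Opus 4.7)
I will address the three parts separately. Parts (1) and (2) are bookkeeping consequences of the Rokhlin tower description in Remark~\ref{rozkladnawieze}, while part (3) is the substantive combinatorial fact that carries the real weight of the proposition and where I expect the main obstacle to lie.

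For part (1), I would argue directly from Remark~\ref{rozkladnawieze}: one has the disjoint decomposition $I = \bigsqcup_{j=1}^d \bigsqcup_{\ell=0}^{s_j^n-1} T^\ell I_j^n$, each level $T^\ell I_j^n$ is contained in a unique $I_i$, and by definition of Rauzy--Veech induction the number of levels in the tower over $I_j^n$ falling inside $I_i$ is exactly $A_{ij}^n$. Since $T$ preserves Lebesgue measure we have $|T^\ell I_j^n|=\la_j^n$, so summing over $j$ gives $\la_i=|I_i|=\sum_j A_{ij}^n\la_j^n$, i.e.\ $\la=A^n\la^n$.

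For part (2), I would induct on $n$, the base case $n=1$ being the definition. Granted $A^{n-1}=A^1(\pi^0,\la^0)\cdots A^1(\pi^{n-2},\la^{n-2})$, set $B^n:=A^1(\pi^{n-1},\la^{n-1})$; then $B_{kj}^n$ counts visits of $I_j^n$ to $I_k^{n-1}$ under the first-return map to $I^{n-1}$, while $A_{ik}^{n-1}$ counts visits of $I_k^{n-1}$ to $I_i$ under $T$ before returning to $I^{n-1}$. Concatenating excursions gives $A_{ij}^n=\sum_k A_{ik}^{n-1}B_{kj}^n$, which is exactly $A^n=A^{n-1}B^n$. Iterating closes the induction.

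Part (3) is where the real work lies and I expect to be the main obstacle. Recurrence of the permutation is easy: Keane's condition guarantees that $\widehat R^n(\pi,\la)$ is well defined for every $n$, and the orbit $\{\pi^n\}$ lies in the (finite) Rauzy class of $\pi$, so pigeonhole yields infinitely many $n$ with $\pi^n=\pi$. The difficulty is upgrading this to strict positivity of the accompanying matrix $A^n$. My plan would be to invoke the fact that the Rauzy graph on a Rauzy class of irreducible permutations is strongly connected, and then to exhibit a loop based at $\pi$ whose one-step matrices multiply to a strictly positive matrix; by the multiplicativity of part (2), this product equals $A^n$ for the corresponding $n$. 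The hard step is to show that (a) such a positive loop exists in the Rauzy graph (a nontrivial combinatorial lemma, exploiting irreducibility of $\pi$ to chain single Rauzy moves that successively fill every matrix entry), and (b) the given length data $\la$ actually realises such a loop under $\widehat R$ — ruling out degenerate symbolic futures via Keane's condition. This is the content of Veech's original argument in \cite{Veech0} and is where I would expect to spend the bulk of the effort.
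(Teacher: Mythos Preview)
The paper does not prove this proposition at all: it is stated with the attribution ``see \cite{Veech0}'' and no argument is given. So there is no in-paper proof to compare against; the authors simply import the result from Veech.

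That said, your sketch is sound. Parts (1) and (2) are indeed the elementary bookkeeping you describe, and the tower-counting and concatenation arguments you give are the standard ones. You are also right that part (3) carries all the weight and that the two genuine issues are (a) the existence of a positive loop in the Rauzy class and (b) that Keane's condition forces the orbit of $(\pi,\la)$ under $\widehat R$ to actually traverse such a loop. One small correction of emphasis: in Veech's argument (and in later expositions such as \cite{Viana}) one does not first build a positive loop combinatorially and then argue that $\la$ realises it; rather one shows directly that under Keane's condition the matrices $A^n(\pi,\la)$ eventually become positive, using that every letter must be ``winner'' infinitely often (otherwise one produces a forbidden connection). The recurrence $\pi^n=\pi$ then comes for free by pigeonhole along the subsequence where positivity already holds, together with the submultiplicativity $A^{n+m}=A^nA^m(\pi^n,\la^n)$ which preserves positivity. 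Your outline would work, but it front-loads a combinatorial lemma that the standard proof sidesteps.
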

For any positive $\,d\times d$ matrix $B$ let
\[
\rho(B)=\max_{1\le i,k,l\le d}\frac{B_{ij}}{B_{ik}}.
\]
Set $b_{j}=\sum_{i=1}^dB_{ij}$ and let $A$ be any nonnegative
nonsingular $\,d\times d$ matrix. The following properties are
easy to prove
\begin{equation}\label{ro1}
b_j\le\rho(B)b_k\quad\text{ for any }\quad1\le j,k\le d,
\end{equation}
\begin{equation}\label{ro2}
\rho(AB)\le\rho(B).
\end{equation}
By {\it normalized Rauzy-Veech induction} we call the map
\[R:S_d^0\times \Lambda^d\to S_d^0\times \Lambda^d,\qquad
R(\pi,\la)=\Big(\pi^1,\frac{\la^1}{|\la^1|}\Big).\] The set of permutations $S^0_d$ splits into subsets called \emph{Rauzy graphs} $G\subset S^0_d$ such
that the product $G\times \Lambda^d$ is $R$-invariant (see \cite{Viana} for details). The following theorem was proved by Veech in \cite{Veech2}.
\begin{theorem}\label{rauzy}
For every Rauzy graph $G\subset S^0_d$ there exists a $\sigma$-finite $R$-invariant measure $\zeta_G$ on $G\times\Lambda^d$ equivalent to the product of
the counting measure on $G$ and the Lebesgue measure on $\Lambda^d$ such that the normalized Rauzy-Veech induction $R$ is ergodic and recurrent on
$(G\times\Lambda^d,\zeta_G)$.
\end{theorem}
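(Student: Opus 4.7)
The plan is to realise the normalized Rauzy-Veech induction $R$ as a projective factor of the unnormalized induction $\widehat R$ (which preserves Lebesgue measure on the cone), and then invoke the positivity property from Proposition~\ref{proposition}(3) to obtain ergodicity via a Hilbert-metric contraction argument, with recurrence following from Hopf's dichotomy.

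First I would construct the invariant measure. On each Rauzy graph $G$, the cone $G\times\re^d_+$ splits into two pieces according to the sign of $\la_d-\la_{\pi^{-1}(d)}$, and on every such piece $\widehat R$ acts as the linear map $\la\mapsto (A^1)^{-1}\la$ with $A^1\in SL_d(\z)$. In particular $\widehat R$ preserves the $d$-dimensional Lebesgue measure on $G\times\re^d_+$. Since $R$ is obtained from $\widehat R$ by quotienting out the positive dilations $\la\mapsto c\la$, disintegrating Lebesgue measure along radial fibres yields a $\sigma$-finite $R$-invariant measure $\zeta_G$ on $G\times\Lambda^d$ whose density on each permutation slice is proportional to $|\la|^{-d}$ times Lebesgue; this is equivalent to the product of the counting measure on $G$ and Lebesgue on $\Lambda^d$, as required.

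The heart of the argument is ergodicity. Let $\phi$ be a bounded $R$-invariant measurable function. By Proposition~\ref{proposition}(3) there exist $n_0\in\n$ and a positive-measure cell $\Delta_0\subset G\times\Lambda^d$ on which $R^{n_0}$ is given by a strictly positive matrix in $SL_d(\z)$ that returns the permutation coordinate to itself. The projective action of a strictly positive matrix strictly contracts the Hilbert metric on $\Lambda^d$, so iterating the inverse branches of $R^{n_0}$ produces a refining sequence of measurable partitions of $\Delta_0$ whose projective diameters shrink to zero. Combining $R$-invariance of $\phi$ with a Lebesgue-differentiation argument along this refining sequence forces $\phi$ to be almost everywhere constant on $\Delta_0$. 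Since every nonempty cell is eventually mapped by some iterate of $R$ onto a full slice $\{\pi'\}\times\Lambda^d$, $\zeta_G$-almost every point visits $\Delta_0$ under forward iteration, and this constancy propagates to all of $G\times\Lambda^d$. Recurrence then follows from the general principle that a nonsingular transformation admitting an equivalent $\sigma$-finite invariant measure (here $\zeta_G$) is conservative, hence recurrent on each set of positive measure by Poincar\'e's theorem.

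The main obstacle is the ergodicity step: one must quantify the Hilbert-metric contraction uniformly along the inverse $R$-orbit of $\Delta_0$ so that the Lebesgue-differentiation argument actually applies, and simultaneously verify the covering/transitivity property that $\zeta_G$-almost every $(\pi,\la)$ visits $\Delta_0$ under $R$. Both requirements hinge on turning the existence of a single strictly positive matrix in the cocycle (Proposition~\ref{proposition}(3)) into a genuine Perron--Frobenius-type contraction and a mixing-type covering statement for the subshift of admissible symbolic sequences generated by $\widehat R$.
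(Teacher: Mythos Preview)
The paper does not prove this theorem: it is quoted verbatim from Veech \cite{Veech2} and used as a black box, so there is no in-paper argument to compare your proposal against. Your construction of $\zeta_G$ by radially disintegrating Lebesgue measure on the cone, and your plan to derive ergodicity from the Hilbert-metric contraction of positive matrices together with the Markov structure of $R$, are in the spirit of the standard treatments (Veech, Kerckhoff, Viana), and you rightly identify the ergodicity step as the place where the genuine work lies.

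There is, however, a real gap in your recurrence argument. The claim that ``a nonsingular transformation admitting an equivalent $\sigma$-finite invariant measure is conservative'' is false: the translation $x\mapsto x+1$ on $(\re,Leb)$ preserves a $\sigma$-finite measure and is completely dissipative. Poincar\'e recurrence requires a \emph{finite} invariant measure, and Hopf's dichotomy only says the space splits into conservative and dissipative parts without telling you which side you are on. Conservativity of $R$ with respect to $\zeta_G$ is a substantive fact. One way to obtain it is to pass to the natural extension on the space of zippered rectangles, where the Masur--Veech measure is finite, and then project; another is to exhibit directly a set of positive finite $\zeta_G$-measure to which almost every orbit returns. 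Either route requires an additional argument that your proposal does not supply.
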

\begin{lemma}[see \cite{Ra}]
For every Rauzy graph $G\subset S^0_d$ there exists $\pi\in G$ such that $\pi(1)=d$ and $\pi(d)=1$.
\end{lemma}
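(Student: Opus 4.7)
The proof proceeds by induction on the number $d$ of exchanged intervals, exploiting the combinatorial structure of Rauzy-Veech moves acting on irreducible permutations. For $d = 2$, the only element of $S_2^0$ is the permutation $\pi(1)=2,\,\pi(2)=1$, which trivially satisfies both conditions.

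For the inductive step, I would fix a Rauzy graph $G \subset S_d^0$ with $d \geq 3$ and an arbitrary $\pi_0 \in G$. The plan is to navigate through $G$ by applying the two Rauzy-Veech combinatorial operations (call them $R_t$ for the top move and $R_b$ for the bottom move) to reach a permutation of the desired standard form. The driving observation is the asymmetric invariance of these two operations: $R_t$ leaves $\pi(d)$ unchanged while modifying the interior entries of the bottom row, whereas $R_b$ leaves $\pi^{-1}(d)$ unchanged while modifying the interior entries of the top row. Together with the fact that $G$ is strongly connected (implicit in Theorem~\ref{rauzy}), this asymmetry is the central tool.

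The first step is to produce some $\pi_1 \in G$ with $\pi_1(d) = 1$. By irreducibility of any $\pi \in S_d^0$ one has $\pi(d) \neq d$, so $\pi(d)$ takes values in $\{1, \ldots, d-1\}$. Iterating $R_b$, the value of $\pi(d)$ traverses this set in a controlled cyclic manner, and combined with the strong connectedness of $G$, some finite sequence of Rauzy moves starting from $\pi_0$ must reach a configuration $\pi_1$ with $\pi_1(d) = 1$. The second step is to further achieve $\pi(1) = d$ while retaining $\pi(d) = 1$. Here I would restrict attention to the subset $G' := \{\pi \in G : \pi(d) = 1\} \subseteq G$, which is closed under $R_t$, and interpret the dynamics on $G'$ as a genuine Rauzy dynamics on a system of $d - 1$ intervals obtained by ``collapsing'' the last interval that is pinned by the condition $\pi(d)=1$. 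Applying the inductive hypothesis to this smaller Rauzy graph would yield the desired $\pi_2 \in G$ satisfying simultaneously $\pi_2(1) = d$ and $\pi_2(d) = 1$.

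The main obstacle I anticipate is justifying the reduction in the second step: identifying the $R_t$-invariant subset $G'$ with a well-defined Rauzy graph of size $d-1$, and verifying that the reduced permutations remain irreducible so that the induction hypothesis is indeed applicable. This requires a careful bookkeeping of how the top move $R_t$ acts on the interior positions $\{1, \ldots, d-1\}$ of a permutation with $\pi(d)=1$, and a direct check that this action coincides with the full Rauzy-Veech combinatorial dynamics on $S_{d-1}^0$. Once this structural reduction is established, the rest of the argument is essentially bookkeeping.
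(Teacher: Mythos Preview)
The paper does not supply its own proof of this lemma; it is quoted from Rauzy~\cite{Ra} and used as a black box, so there is no in-paper argument to compare against.

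That said, your proposed inductive reduction has a genuine gap, and it is not merely bookkeeping. Restricting to $G' = \{\pi \in G : \pi(d) = 1\}$ and keeping only the top move $R_t$ leaves you with a \emph{single} self-map of $G'$, whereas genuine Rauzy dynamics on $S_{d-1}^0$ involves two moves; one map cannot simulate both. Concretely, take $d = 4$ and $\pi = (2,3,4,1) \in S_4^0$, which satisfies $\pi(4) = 1$. The natural reduction $\bar\pi(j) := \pi(j) - 1$ is the identity on three symbols, which is not irreducible, so the induction hypothesis cannot even be invoked at $\pi$. Passing to the $R_t$-iterate $a(\pi)=(3,4,2,1)$ gives the irreducible reduction $(2,3,1)$, but the next iterate $a^2(\pi)=(4,2,3,1)$ reduces to $(3,1,2)$, which is neither the $a$- nor the $b$-image of $(2,3,1)$ in $S_3^0$; so the reduction does not intertwine $R_t$ on $G'$ with any Rauzy move on $S_{d-1}^0$, and there is no way to transport a standard permutation found in the reduced class back into $G'$. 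Your first step is also underspecified: iterating $R_b$ from a fixed $\pi$ lets $\pi(d)$ range only over the cycle $\{\pi(m{+}1),\ldots,\pi(d)\}$ with $m=\pi^{-1}(d)$, which need not contain $1$.

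The argument in~\cite{Ra} is direct rather than inductive. The substantive step is to show that every Rauzy class contains a permutation with $\pi(1) = d$. Once this is established, observe that the bottom move $R_b$ preserves $\pi(1)=d$ (since $\pi^{-1}(d)=1$ is $R_b$-invariant) and acts as a full cyclic rotation on the tail $(\pi(2),\ldots,\pi(d))$; hence $R_b^k(\pi)(d)$ runs through all of $\{1,\ldots,d-1\}$, and some iterate satisfies $\pi(d)=1$ while retaining $\pi(1)=d$.
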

\begin{remark}\label{mnozmac}
Let $T_{\pi,\la}:[0,1)\to[0,1)$ be an interval exchange transformation satisfying Keane's condition.  Then for every $\lambda'\in\Lambda^d$  we have
\[A^1(\pi,A^1(\pi,\la)\la')=A^1(\pi,\la)\text{ and }\pi^1(\pi,A^1(\pi,\la)\la')=\pi^1(\pi,\la),\] cf.\ Remark~2 in \cite{Fr}. By induction, for every $n\ge
0$ and $0\le k\le n$ we have $A^k(\pi,A^n(\pi,\la)\la')=A^k(\pi,\la)$ and the permutation obtained after $k$-th steps of the Rauzy-Veech induction from
$T_{\pi,A^n(\pi,\la)\la'}$ and $T_{\pi,\la}$ are the same. Therefore, in view of Proposition~\ref{proposition},
$\widehat{R}^n(\pi,A^n(\pi,\la)\la')=(\pi^n,\la')$, where $\pi^n$ is a permutation such that $\widehat{R}^n(\pi,\la)=(\pi^n,\la^n)$.
\end{remark}

\section{Piecewise absolutely continuous roof functions}\label{sec:abscont}
In this section we  prove Theorem~\ref{thm:abscont}, which is the main result of the paper. Let $T:=T_{\pi,\la}:[0,1)\to[0,1)$ be an ergodic IET and  let
$f:[0,1)\to\re$ be a positive piecewise absolutely continuous roof function. We will always assume that $f$ is  right-continuous. Then the derivative $Df$
is well defined almost everywhere, it is integrable and we can define the sum of jumps of $f$ as
\[
S(f):=\int_0^1Df(x)\,dx.
\]
Let us decompose the function $f$ into the sum of functions $f_{pl}$ and $f_{ac}$, where $f_{pl}$ is a piecewise linear function with the slope $S(f)$ and
$f_{ac}$ given by
\begin{equation*}
f_{ac}(x)=\int_0^xDf(t)\,dt-S(f)\,x
\end{equation*}
is an absolutely continuous function. Note that $\int_0^1Df_{ac}(t)\,dt=f_{ac}(1)-f_{ac}(0)=0$.

The proof of Theorem~\ref{thm:abscont} will be based on the following lemmas.

\begin{lemma}\label{general}
For almost every $(\pi,\la)\in S_d^0\times \Lambda^d$ and for every $r\geq 3$ there exists $D_\la\subset[0,1)^r$ of full Lebesgue measure such that:
\begin{enumerate}
\item\label{Ln0} if $f:[0,1)\to\re_+$ is piecewise linear with slope $s\neq 0$ and $f$ is continuous on  intervals exchanged by $T:=T_{\pi,\la}$, or
\item\label{L0} if $f$ is piecewise constant with no jumps of opposite value and all discontinuity points $\beta_1,\ldots,\beta_r\in[0,1)$ of $f$ are such
that $(\beta_1,\ldots,\beta_r)\in D_\la$,
\end{enumerate}
then there exists a sequence of Rokhlin towers $\{T^i\Delta_n:0\leq i<q_n\}$ for $n\in\n$ (with $q_n\to+\infty$)  such that $T^i\Delta_n$ are intervals for
all $i=0\leq i<q_n$ and $n\in\n$  and if
\begin{equation}\label{eq:jn}
J_n:=\Delta_n\cap T^{-q_n}\Delta_n\cap T^{-2q_n}\Delta_n
\end{equation}
then the sets $W_n=\bigcup_{i=0}^{q_n-1}T^{i}J_n$ satisfy conditions \eqref{wl1}-\eqref{wl7} and  $\xi_*P=\sum_{i=0}^mc_i\de_{d_i}$ is a discrete measure with $d_0=0$ such that
\begin{equation}\label{neq:condme}
\sum_{i=1}^mc_i>\frac{1-\al}{\al}\quad\text{ and }\quad d_i\neq-d_j\text{ for }i\neq j.
\end{equation}
\end{lemma}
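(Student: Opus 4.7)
The plan is to produce the towers $\{T^i\Delta_n\}$ via Rauzy-Veech induction applied to $(\pi,\la)$, and then to verify the hypotheses of Theorem~\ref{glwynik} separately for the piecewise linear case \eqref{Ln0} and the piecewise constant case \eqref{L0}, which are presumably handled in Sections~\ref{sec:overites} and \ref{sec:pieconst} respectively.

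First I would use the ergodicity and recurrence of the normalized Rauzy-Veech map $R$ on each Rauzy graph (Theorem~\ref{rauzy}) together with Proposition~\ref{proposition}(iii), to extract for almost every $\la\in\Lambda^d$ a subsequence $n_k\to\infty$ such that $A^{n_k}(\pi,\la)$ is strictly positive with a uniform bound on $\rho(A^{n_k})$ and such that $\pi^{n_k}$ lies in a fixed finite set of permutations. By \eqref{ro1} the column heights $s_j^{n_k}$ are then comparable. Take $\Delta_n$ to be a carefully chosen subinterval of one of the inducing pieces $I^{n_k}_j$ and set $q_n=s_j^{n_k}$; by Remark~\ref{rozkladnawieze} the sets $T^i\Delta_n$, $0\le i<q_n$, are pairwise disjoint intervals, so $\{T^i\Delta_n\}$ is a Rokhlin tower. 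Because the renormalization forces $T^{q_n}$ to act on $\Delta_n$ as a translation of absolute value at most $|I^{n_k}|\to 0$, Lemma~\ref{metr} applied to the Euclidean metric on $[0,1)$ yields the rigidity conditions \eqref{wl3}--\eqref{wl4} along $W_n$ for both $q_n$ and $q_n'=2q_n$. Conditions \eqref{wl1}--\eqref{wl2} follow from the tower structure and the comparability of column heights, with $\alpha=\lim\mu(W_n)$ controlled by adjusting the width of $\Delta_n$ inside $I^{n_k}_j$.

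Next I would verify the $L^2$-boundedness \eqref{wl5}--\eqref{wl6} and the weak limit \eqref{wl7}, taking $a_n$ to be the integral mean of $f^{(q_n)}$ over $W_n$. In case \eqref{Ln0}, the hypothesis that $f$ is continuous across each exchanged interval, combined with $\Delta_n$ being chosen to avoid all discontinuities of $f$, makes $f^{(q_n)}$ an affine function of slope $s q_n$ on each level $T^i\Delta_n$; the centered oscillation is then controlled by $|s|\cdot q_n|\Delta_n|$, which stays bounded. In case \eqref{L0} the Birkhoff sum is piecewise constant on $\Delta_n$ with break points only at the finitely many pullbacks $T^{-i}\beta_j$ that meet $\Delta_n$, so it takes a bounded number of distinct values. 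The identity \eqref{eq:xip} then gives
\[
\xi\circ(f_n',f_n)(x)=f^{(q_n)}(T^{q_n}x)-f^{(q_n)}(x),
\]
which in case \eqref{Ln0} reduces to $s$ times a telescoping translation, and in case \eqref{L0} to a signed count of discontinuities of $f$ that the small $T^{q_n}$-translation crosses on each level. In either case, $\xi\circ(f_n',f_n)$ takes only finitely many values on $J_n$, and after passing to a further subsequence the limit $\xi_*P=\sum_{i=0}^mc_i\de_{d_i}$ is a finite discrete measure with $d_0=0$ (corresponding to those levels where no jump is crossed).

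The main obstacle will be the quantitative conclusion \eqref{neq:condme}. The separation $d_i\ne -d_j$ for $i\ne j$ comes, in case \eqref{Ln0}, from the fact that $s\ne 0$ dictates all displacements have the same sign as the $T^{q_n}$-translation, and, in case \eqref{L0}, directly from the ``no jumps with opposite value'' hypothesis combined with the exclusion, inside the generic set $D_\la\subset[0,1)^r$, of those $(\beta_1,\ldots,\beta_r)$ that produce cancellations in the signed sum; the condition $r\ge 3$ enters here to provide enough degrees of freedom. Verifying $\sum_{i\ge 1}c_i>(1-\alpha)/\alpha$ is the genuinely delicate part: it requires choosing $\Delta_n$ and the width parameter so that the set of levels on which the $T^{q_n}$-translation \emph{does} cross at least one discontinuity has density bounded below by a quantity strictly larger than $1-\alpha$, while simultaneously keeping $\alpha$ close to $1$. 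I expect this to be achieved by a simultaneous diophantine-type genericity argument on $\la$ and, in case \eqref{L0}, on $(\beta_1,\ldots,\beta_r)$, and it is exactly at this point that the two sections \ref{sec:overites} and \ref{sec:pieconst} will diverge technically.
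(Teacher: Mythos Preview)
Your outline for case~\eqref{Ln0} is essentially the paper's argument: one takes a single dominating Rauzy-Veech tower with $\Delta_n=I_1^{m+r_n}$ and $q_n=s_1^n$, arranges $\alpha>1/2$, and the key computation is that $T^{q_n+j}x-T^jx=\gamma_n/q_n$ is \emph{constant} over $W_n$ (Lemma~\ref{GRAN}), so $f^{(q_n)}\circ T^{q_n}-f^{(q_n)}\equiv s\gamma_n\to s\gamma\neq 0$ and $\xi_*P=\delta_{s\gamma}$ is a single Dirac mass. The condition \eqref{neq:condme} then reduces to $1>(1-\alpha)/\alpha$, i.e.\ $\alpha>1/2$.

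For case~\eqref{L0} there is a genuine gap. The paper does \emph{not} use a single Rauzy-Veech tower; it uses a composite tower of height $q_n=s_1^n+s_d^n$, built by first choosing a permutation $\pi_0$ with $\pi_0(1)=d$, $\pi_0(d)=1$ and then returning to a simplex region where $\la_1^{m+r_n}$ and $\la_d^{m+r_n}$ are both close to $\tfrac12|I^{m+r_n}|$ with a controlled gap $\la_d^{m+r_n}-\la_1^{m+r_n}\in(\tfrac{\ep}{2},2\de)|I^{m+r_n}|$. In this regime $\alpha$ is \emph{not} close to $1$ (only $\alpha\ge 1/(4\rho(B))$), and the whole point is that the $T^{q_n}$-translation on $W_n$ equals exactly $\la_d^{m+r_n}-\la_1^{m+r_n}$, which is calibrated so that $r\cdot q_n(\la_d^{m+r_n}-\la_1^{m+r_n})$ exceeds $1-\alpha$ by a definite amount. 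This is where $r\ge 3$ enters concretely: the decisive inequality is $(r-2)s_1^n+(r-3)s_d^n>0$, not a vague ``degrees of freedom'' argument. Your single-tower scheme with $\alpha$ near $1$ would force the translation length, and hence the mass of the nonzero atoms, to be too small relative to $1-\alpha$.

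Two further corrections for case~\eqref{L0}. First, the generic set $D_\la$ is not obtained by excluding cancellations; it comes from Lemma~\ref{rokhlin} applied to $r$ disjoint subtowers $W_n^l\subset W_n$, guaranteeing that for a.e.\ $(\beta_1,\ldots,\beta_r)$ each $\beta_l$ lies in $W_n^l$ infinitely often, which in turn forces the sets $V_l^n=\bigcup_{i<q_n}T^{-i}[\beta_l-(\la_d^{m+r_n}-\la_1^{m+r_n}),\beta_l)$ to be pairwise disjoint. Second, on $W_n$ the function $f^{(q_n)}\circ T^{q_n}-f^{(q_n)}$ takes the value $d_l$ precisely on $V_l^n$ and $0$ elsewhere, so $\xi_*P$ has atoms exactly at $0,d_1,\ldots,d_r$; the separation $d_i\neq -d_j$ is then an immediate consequence of the ``no jumps with opposite value'' hypothesis, with no further genericity needed.
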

We will present the proof of Lemma~\ref{general} under assumptions \ref{Ln0} and \ref{L0} separately. The proofs are postpone until  Sections
\ref{sec:overites} and \ref{sec:pieconst} respectively.

The method used in the proof of the following lemma was introduced in the proof of Theorem~2 in   \cite{Katok}.
\begin{lemma}\label{konst2}
Let $T:I\to I$ be an IET and let $f:I\to\re$ be a function of bounded variation. Let $\{T^i\Delta:0\leq i<q\}$ be a Rokhlin tower of intervals. Then there
exists $a\in\re$ such that
\[|f^{(q)}(x)-a|\le Var_{[0,1)}f\quad\text{ and }\quad|f^{(2q)}(x)-2a|\le 2Var_{[0,1)}f\]
for all  $x\in\bigcup_{i=0}^{q-1}T^i(\Delta\cap T^{-q}\Delta\cap T^{-2q}\Delta)$.
\end{lemma}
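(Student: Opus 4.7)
The plan is to exploit the interval-tower hypothesis to bound the oscillation of the Birkhoff sum $f^{(q)}$. Let $J':=\Delta\cap T^{-q}\Delta\supset J$. Because the levels $T^0\Delta,T^1\Delta,\ldots,T^{q-1}\Delta$ are pairwise disjoint intervals in $[0,1)$, for any $z\in J'$ the orbit $z,Tz,\ldots,T^{2q-1}z$ climbs the tower twice (once using $z\in\Delta$, then using $T^qz\in\Delta$). Consequently, for $0\le i\le q-1$ the first $q$ iterates of $x=T^iz$ visit every level exactly once, and
\[
f^{(q)}(T^iz)\;=\;\sum_{\ell=i}^{q-1}f(T^\ell z)\;+\;\sum_{\ell=0}^{i-1}f(T^{q+\ell}z),
\]
where the $\ell$-th summand is a value of $f$ at a point of $T^\ell\Delta$.

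Subtracting two such expressions at $(z,i)$ and $(z',i')$ gives a level-by-level difference of the form $\sum_{\ell=0}^{q-1}[f(p_\ell)-f(p'_\ell)]$ with $p_\ell,p'_\ell\in T^\ell\Delta$. Using subadditivity of total variation on disjoint intervals,
\[
\bigl|f^{(q)}(T^iz)-f^{(q)}(T^{i'}z')\bigr|\;\le\;\sum_{\ell=0}^{q-1}\operatorname{Var}_{T^\ell\Delta}f\;\le\;\operatorname{Var}_{[0,1)}f.
\]
Thus the oscillation of $f^{(q)}$ on $\bigcup_{i=0}^{q-1}T^iJ'$ is at most $\operatorname{Var}_{[0,1)}f$, so choosing $a$ as the midpoint of $\sup$ and $\inf$ of $f^{(q)}$ on this set yields $|f^{(q)}(x)-a|\le\tfrac{1}{2}\operatorname{Var}_{[0,1)}f\le\operatorname{Var}_{[0,1)}f$ there, hence in particular on $\bigcup_{i=0}^{q-1}T^iJ$.

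For the second estimate, write $f^{(2q)}(x)=f^{(q)}(x)+f^{(q)}(T^qx)$. The key observation is that for $x=T^iy$ with $y\in J$, the point $T^qy$ lies in $J'$ (although generally not in $J$): indeed $T^qy\in\Delta$ since $y\in T^{-q}\Delta$, and $T^q(T^qy)=T^{2q}y\in\Delta$ since $y\in T^{-2q}\Delta$. Hence $T^qx=T^i(T^qy)\in\bigcup_{j=0}^{q-1}T^jJ'$, and the bound of the previous step applies to both $x$ and $T^qx$, giving $|f^{(2q)}(x)-2a|\le\operatorname{Var}_{[0,1)}f\le 2\operatorname{Var}_{[0,1)}f$. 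There is no serious obstacle; the one thing to notice is that passing from $J$ to the larger base $J'$ is precisely what lets one treat $T^qx$ in the same framework as $x$, while the interval-tower structure is exactly what guarantees that the orbit of length $q$ visits each level once, so that variations over the disjoint levels telescope to at most $\operatorname{Var}_{[0,1)}f$ rather than being counted several times.
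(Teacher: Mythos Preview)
Your argument is correct and follows essentially the same route as the paper: both proofs rewrite $f^{(q)}(T^iz)$ for $z\in\Delta\cap T^{-q}\Delta$ as a sum with exactly one term in each level $T^\ell\Delta$, bound the resulting level-by-level differences by $\sum_\ell\operatorname{Var}_{T^\ell\Delta}f\le\operatorname{Var}_{[0,1)}f$, and then use the cocycle identity $f^{(2q)}=f^{(q)}+f^{(q)}\circ T^q$ together with the observation that $T^qy\in J'$ for $y\in J$. The only cosmetic difference is the choice of $a$: the paper takes the integral average $a=\frac{1}{|\Delta|}\int_{\bigcup_\ell T^\ell\Delta}f$, whereas you take the midpoint of the range of $f^{(q)}$ on $\bigcup_i T^iJ'$, which in fact gives the sharper constants $\tfrac12\operatorname{Var}_{[0,1)}f$ and $\operatorname{Var}_{[0,1)}f$.
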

\begin{proof}
Let
\begin{equation*}\label{ciag2}
a:=\frac{1}{|\Delta|}\int_{\bigcup_{l=0}^{q-1}T^l\Delta}f(t)\,dt.
\end{equation*}
Then for $x\in T^k(\Delta\cap T^{-q}\Delta)$ we have
\[
\begin{split}
|f^{(q)}(x)-a|&\le\sum_{k\le i<q}\frac{1}{|\Delta|}\int_{T^i\Delta}|f(T^{i-k}x)-f(t)|\,dt+\sum_{0\le
i<k}\frac{1}{|\Delta|}\int_{T^i\Delta}|f(T^{q+i-k}x)-f(t)|\,dt\\&\le \sum_{0\le i< q}Var_{T^i\Delta}f\le Var_{[0,1]}f.
\end{split}
\]
If $x\in T^k(\Delta\cap T^{-q}\Delta\cap T^{-2q}\Delta)$ then $T^qx\in T^k(\Delta\cap T^{-q}\Delta)$. Hence,
\[
|f^{(2q)}(x)-2a|\le |f^{(q)}(x)-a|+|f^{(q)}(T^qx)-a|\le 2Var_{[0,1]}f,
\]
which concludes the proof of the lemma.
\end{proof}
The proof of the following result is partially based on the proof of Lemma~4.8 in \cite{Kul}.
\begin{lemma}\label{AClem}
Assume that $T:[0,1)\to[0,1)$ is a uniquely ergodic IET and $g:[0,1)\to\re$ is a function of bounded variation. Let $\{T^i\Delta_n:0\leq i<q_n\}$ for
$n\in\n$ be a sequence of Rokhlin towers such that $T^i\Delta_n$ are intervals for all $i=0\leq i<q_n$ and $n\in\n$ and $q_n\to+\infty$. Let
$J_n:=\Delta_n\cap T^{-q_n}\Delta_n\cap T^{-2q_n}\Delta_n$ and $W_n=\bigcup_{i=0}^{q_n-1}T^{i}J_n$. Suppose that $\lim Leb(W_n)=\al>0$.
\begin{enumerate}
\item There exist a real sequence  $\{a_n\}_{n\in\n}$ and $C>0$ such that  $|g^{(q_n)}(x)-a_n|\leq C$ and
$|g^{(2q_n)}(x)-2a_n|\leq C$ for all $x\in W_n$ and $n\in\n$.
\item There exists measure $Q\in\mathcal{P}(\re^2)$ such that
$\big(g^{(2q_n)}-2a_n,g^{(q_n)}-a_n\big)_*(Leb_{W_n})\to Q$ weakly in $\mathcal{P}(\re^2)$, up to taking a subsequence.
\item If additionally $g$ is absolutely continuous with $\int_0^1Dg(t)dt=0$ then
\[(g^{(q_n)}\circ T^{q_n}-g^{(q_n)})\chi_{W_n}\to 0\quad\text{ uniformly.}\]
\end{enumerate}
\end{lemma}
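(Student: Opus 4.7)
The plan is to handle the three parts in sequence; parts (1) and (2) are formal consequences of what is available, while (3) requires real work. For (1), observe that $W_n=\bigcup_{i=0}^{q_n-1}T^iJ_n$ is exactly the union $\bigcup_{i=0}^{q_n-1}T^i(\Delta_n\cap T^{-q_n}\Delta_n\cap T^{-2q_n}\Delta_n)$ appearing in Lemma~\ref{konst2} applied to the tower $\{T^i\Delta_n:0\le i<q_n\}$ and the BV function $g$. That lemma supplies $a_n$ with $|g^{(q_n)}(x)-a_n|\le Var_{[0,1)}g$ and $|g^{(2q_n)}(x)-2a_n|\le 2\,Var_{[0,1)}g$ on $W_n$, and we set $C:=2\,Var_{[0,1)}g$. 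For (2), note that by (1) the pushforwards $(g^{(2q_n)}-2a_n,g^{(q_n)}-a_n)_*(Leb_{W_n})$ are probability measures supported in the compact rectangle $[-C,C]^2\subset\re^2$, so the family is tight and Prokhorov's theorem produces a weakly convergent subsequence with limit $Q\in\mathcal{P}(\re^2)$.

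For (3) my strategy is a smooth-plus-small-BV decomposition. The hypothesis $\int_0^1 Dg=0$ together with absolute continuity means $g$ extends continuously across $0\sim 1$, so $Dg$ is an $L^1$ function on the circle with zero mean. Mollifying $Dg$ by a smooth periodic mollifier produces, for each $\eta>0$, a decomposition $g=g_\eta+r_\eta$ with $Dg_\eta$ continuous and periodic, $\int_0^1 Dg_\eta=0$, and $Var_{[0,1)}r_\eta=\|Dg-Dg_\eta\|_{L^1}<\eta$. Using the identity
\[
g^{(q_n)}(T^{q_n}x)-g^{(q_n)}(x)=g^{(2q_n)}(x)-2g^{(q_n)}(x),
\]
part (1) applied to $r_\eta$ gives $\sup_{x\in W_n}|r_\eta^{(q_n)}(T^{q_n}x)-r_\eta^{(q_n)}(x)|\le 4\eta$, so it suffices to show, for each fixed $\eta>0$, that $\sup_{x\in W_n}|g_\eta^{(q_n)}(T^{q_n}x)-g_\eta^{(q_n)}(x)|\to 0$ as $n\to\infty$.

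To control the smooth piece I write $x=T^iy$ with $y\in J_n$, $0\le i<q_n$. Since $T^j$ acts as a translation on the interval $T^j\Delta_n$ for $0\le j<q_n$ and since $y,T^{q_n}y,T^{2q_n}y\in\Delta_n$, the difference splits as
\[
\sum_{m=i}^{q_n-1}\bigl[g_\eta(T^my+\delta_1)-g_\eta(T^my)\bigr]+\sum_{m=0}^{i-1}\bigl[g_\eta(T^m(T^{q_n}y)+\delta_2)-g_\eta(T^m(T^{q_n}y))\bigr],
\]
with $\delta_1:=T^{q_n}y-y$ and $\delta_2:=T^{2q_n}y-T^{q_n}y$, both of modulus $\le|\Delta_n|$. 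Because $Dg_\eta$ is continuous, the fundamental theorem of calculus rewrites the first sum as $\int_0^{\delta_1}(Dg_\eta)^{(q_n-i)}(T^i(y+s))\,ds$, bounded by $|\delta_1|\sup_u|(Dg_\eta)^{(q_n-i)}(u)|$, and analogously for the second sum with $i$ in place of $q_n-i$.

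The main obstacle is that unique ergodicity of $T$ applied to $Dg_\eta$ with $\int Dg_\eta=0$ yields $\sup_u|(Dg_\eta)^{(N)}(u)|/N\to 0$ only as $N\to\infty$, whereas the lengths $q_n-i$ and $i$ can be as small as $1$. I resolve this by a two-regime split: given $\theta>0$, choose $N_1$ with $\sup_u|(Dg_\eta)^{(N)}(u)|\le\theta N$ for all $N\ge N_1$. If $q_n-i\ge N_1$ the bound on the first sum becomes $\le|\Delta_n|\cdot\theta(q_n-i)\le\theta$ thanks to $q_n|\Delta_n|\le 1$; if $q_n-i<N_1$ the trivial estimate $|(Dg_\eta)^{(N)}|\le N\|Dg_\eta\|_\infty$ combined with $|\Delta_n|\to 0$ gives $\le|\Delta_n|N_1\|Dg_\eta\|_\infty\to 0$. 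The second sum is handled symmetrically via the dichotomy $i\ge N_1$ versus $i<N_1$. Letting $\theta\downarrow 0$ and then $\eta\downarrow 0$ concludes part (3).
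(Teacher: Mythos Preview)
Your proofs of (i) and (ii) are correct and coincide with the paper's: invoke Lemma~\ref{konst2} for (i), then compact support gives tightness for (ii).

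For (iii) your overall strategy---approximate $g$ by a smooth function, use unique ergodicity on the derivative, and control the remainder by its small variation---is the same as the paper's, but the execution differs in two notable ways. First, for the remainder $r_\eta$ you use the cocycle identity $r_\eta^{(q_n)}\circ T^{q_n}-r_\eta^{(q_n)}=r_\eta^{(2q_n)}-2r_\eta^{(q_n)}$ together with part (i), whereas the paper bounds it directly by observing that the intervals $[T^ix,T^{q_n+i}x)$ for $0\le i<q_n$ lie in distinct levels of the tower and hence are disjoint, so $\sum_i\operatorname{Var}_{[T^ix,T^{q_n+i}x)}(g_\ep-g)\le\operatorname{Var}_{[0,1)}(g_\ep-g)<\ep$. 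Both are valid; yours is slightly slicker. Second, and more substantively, for the smooth part the paper rewrites $\sum_i\int_{T^ix}^{T^{q_n+i}x}Dg_\ep$ as a single integral $\int_x^{T^{q_n}x}(Dg_\ep)^{(q_n)}(t)\,dt$ and then applies unique ergodicity to the \emph{full} Birkhoff sum of length $q_n$, which spares it your two-regime argument for short versus long sums. However, that single change of variables tacitly requires $T^j$ to act as a translation on the interval $[x,T^{q_n}x]$ for \emph{all} $0\le j<q_n$; when $x=T^iy$ with $i>0$ this needs $T^{q_n}$ itself to be a translation on the relevant part of $\Delta_n$, a property that holds in every application of the lemma in the paper (cf.\ Remark~\ref{rozkladnawieze}, Lemma~\ref{GRAN}, and the construction in Section~\ref{sec:pieconst}) but is not part of the stated hypotheses. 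Your two-piece decomposition with $\delta_1=T^{q_n}y-y$ and $\delta_2=T^{2q_n}y-T^{q_n}y$ uses only that $T^m$ is a translation on $\Delta_n$ for $0\le m<q_n$, so your argument is the more careful one under the bare hypotheses of the lemma; the price you pay is the extra $N_1$-threshold step, which you carry out correctly.
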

\begin{proof}
The  claim (i) follows directly from Lemma \ref{konst2}. Since the family of probability distributions
$\big(g^{(2q_n)}-2a_n,g^{(q_n)}-a_n\big)_*(Leb_{W_n})$, $n\in\n$
is supported on a compact subset of $\re^2$, there exists a limit distribution $Q$ and (ii) is proved. 

It remains to prove (iii). Assume that $g$ is absolutely continuous with $\int_0^1Dg(t)dt=0$. Then for every $\ep>0$ there exists a function $g_{\ep}\in
C^1([0,1])$ such that $Var(g_{\ep}-g)=\|Dg-Dg_{\ep}\|_{L^1}<\ep$ and $g_\ep(0)=g(0)$. Therefore,
\begin{equation}\label{Epsilon}
\Big|\int_0^1Dg_{\ep}(t)\,dt\Big|=\Big|\int_0^1(Dg_{\ep}(t)-Dg(t))\,dt\Big|\le\int_0^1|Dg_{\ep}(t)-Dg(t)|\,dt<\ep.
\end{equation}
By the unique ergodicity of $T$, we have
\[
\frac{1}{q_n}\Big|\sum_{i=0}^{q_n-1}Dg_{\ep}\circ T^i\Big|\to\Big|\int_0^1Dg_{\ep}(t)dt\Big| \quad\text{ uniformly.}\] For sufficiently large $n$ and for
$x\in W_n$ we have
\[
\begin{split}
\Big|\sum_{i=0}^{q_n-1}(g_{\ep}(T^{q_n+i}x)-g_{\ep}(T^ix))\Big|&= \Big|\sum_{i=0}^{q_n-1}\int_{T^ix}^{T^{q_n+i}x}Dg_{\ep}(y)\,dy\Big|\\&\le
\Big|\int_x^{T^{q_n}x}\Big|\sum_{i=0}^{q_n-1}Dg_{\ep}(T^it)\,dt\Big|\,dy\Big|<q_n\ep|T^{q_n}x-x|,
\end{split}
\]
where in the last inequality we used unique ergodicity and \eqref{Epsilon}.
Since $|T^{q_n}x-x|\le Leb(\Delta_n)$ and $q_n Leb(\Delta_n)=Leb (\bigcup_{i=0}^{q_n-1}T^i\Delta_n)\leq 1$, we have $q_n|T^{q_n}x-x|\le 1$, and thus
\begin{equation}\label{g1}
\Big|\sum_{i=0}^{q_n-1}(g_{\ep}(T^{q_n+i}x)-g_{\ep}(T^ix))\Big|<\ep.
\end{equation}
We adhere to the convention that $[x,y)=[y,x)$ whenever $0\leq y<x<1$. Since $[T^ix,T^{q_n+i}x)$
for $0\le i<q_n$ are included in different levels of the tower $\{T^j\Delta_n:0\leq j<q_n\}$, the intervals $[T^ix,T^{q_n+i}x)$ for $0\le i<q_n$ are pairwise disjoint.
Hence for $x\in W_n$ we get
\begin{equation}\label{g2}
\begin{split}
\Big|\sum_{i=0}^{q_n-1}&(g_{\ep}(T^{q_n+i}x)-g_{\ep}(T^ix))-\sum_{i=0}^{q_n-1}(g(T^{q_n+i}x)-g(T^ix))\Big|\\&\leq
\sum_{i=0}^{q_n-1}\Big|(g_{\ep}-g)(T^{q_n+i}x)-(g_{\ep}-g)(T^ix)\Big|\\
&\leq\sum_{i=0}^{q_n-1}Var_{[T^ix,T^{q_n+i}x)}(g_{\ep}-g)\leq Var_{[0,1)}(g_\ep-g)<\ep.
\end{split}
\end{equation}
Combining \eqref{g1} and \eqref{g2}, for sufficiently large natural  $n$ and $x\in W_n$ we have
\[
|g^{(q_n)}(T^{q_n}x)-g^{(q_n)}(x)|=\Big|\sum_{i=0}^{q_n-1}(g(T^{q_n+i}x)-g(T^ix))\Big|<2\ep,
\]
which completes the proof.
\end{proof}

\begin{proof}[Proof of Theorem~\ref{thm:abscont}]
Let $f=f_{pl}+f_{ac}$ be the decomposition of $f$ into its piecewise linear part with slope $S(f)$ and the
absolutely continuous part satisfying $\int_0^1Df_{ac}(t)dt=0$. If $f_{ac}=0$, then the assertions \ref{ACn0} and \ref{AC0} of
the theorem follow straightforwardly from Theorem~\ref{glwynik} and the parts \ref{Ln0} and \ref{L0} of Lemma \ref{general} respectively.
We will show that the result remains unchanged when $f_{ac}$ is non-zero.

Let $\{W_n\}_{n\in\n}$ be the sequence of Rokhlin towers, $\{q_n\}_{n\in\n}$ the increasing sequence of natural numbers and $\{a_n\}_{n\in\n}$ real
sequence arisen from Lemma \ref{general} applied to $f_{pl}$ and let $P\in \mathcal{P}(\re^2)$ be the weak limit of
$\big(f_{pl}^{(2q_n)}-2a_n,f_{pl}^{(q_n)}-a_n\big)_*(Leb_{W_n})$ as $n\to+\infty$. In both cases we obtain that the measure $\xi_*P\in\mathcal P(\re)$
satisfies the assumption of Theorem \ref{glwynik}. By Lemma \ref{konst2}  applied to the function $f$ and the sequence $\{W_n\}_{n\in\n}$, we obtain a
sequence $\{b_n\}_{n\in\n}$ such that
\[|f^{(q_n)}(x)-b_n|\leq Var(f)\text{ and } |f^{(2q_n)}(x)-2b_n|\leq 2Var(f)\text{ for $x\in W_n$ and $n\in\n$}\]
and there exists a weak limit $Q\in\mathcal P(\re^2)$ of measures $\big(f^{(2q_n)}-2b_n,f^{(q_n)}-b_n\big)_*(Leb_{W_n})$ for $n\in\n$. Observe that
$\xi_*Q=\xi_*P$. Indeed, by \eqref{eq:xip}, we have
\[
\xi_*Q\leftarrow\big(f^{(q_n)}\circ T^{q_n}-f^{(q_n)}\big)_*(Leb_{W_n})=\big(f_{pl}^{(q_n)}\circ T^{q_n}-f_{pl}^{(q_n)}+f_{ac}^{(q_n)}\circ
T^{q_n}-f_{ac}^{(q_n)}\big)_*(Leb_{W_n}).
\]
By Lemma~\ref{AClem},  $(f_{ac}^{(q_n)}\circ
T^{q_n}-f_{ac}^{(q_n)})\chi_{W_n}\to 0$ uniformly as $n\to\infty$.
Therefore, using Lemma~\ref{dodfun}, we have
\[
\xi_*Q=\lim_{n\to\infty}(f_{pl}^{(q_n)}\circ T^{q_n}-f_{pl}^{(q_n)})_*Leb_{W_n}=\xi_*P.
\]
It follows that the measure $Q$ also satisfies the assumption of Theorem \ref{glwynik}, which concludes the proof.
\end{proof}

\section{Non-reversibility of special flows over interval exchange transformation}\label{sec:overites}
\subsection{Piecewise linear roof functions}
In this section we  prove  Lemma \ref{general} under assumption that the roof function $f$ is piecewise linear with non-zero constant slope. To do that we
construct a sequence of towers $W_n$ for almost every interval exchange transformation $T$ so that the assumptions of Theorem \ref{glownyrach} are met. The
construction is based on the construction made by Veech in \cite{Veech3} (see also \cite{Katok}).
In the following lemma we perform the main step of construction.
\begin{lemma}\label{konst}
For every $\pi\in S^0_d$ and a.e.\ $\lambda\in\Lambda^d$ there exist a sequence $\{W_n\}_{n\in\n}$ of measurable sets and increasing sequence
$\{q_n\}_{n\in\n}$ of natural numbers so that  conditions \eqref{wl1}-\eqref{wl4} are satisfied with $q_n'=2q_n$.
\end{lemma}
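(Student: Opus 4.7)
The plan is to use the Rauzy-Veech induction, together with recurrence of the Veech ergodic measure, to produce for almost every $\la$ a sequence of Rokhlin towers of intervals with balanced geometry, and then to take $W_n$ to be (most of) one such tower. By Theorem~\ref{rauzy} and Proposition~\ref{proposition}(3), for almost every $\la\in\Lambda^d$ the $R$-orbit of $(\pi,\la)$ returns infinitely often to the set of configurations $(\pi',\la')$ with $\pi'=\pi$ for which $A^1(\pi',\cdot)$ is strictly positive with $\rho$-norm bounded by some fixed $M$. Writing $n_k$ for those return times and $B_k:=A^{n_{k+1}}(A^{n_k})^{-1}$, each $B_k$ is strictly positive with $\rho(B_k)\le M$, and iterating \eqref{ro2} gives $\rho(A^{n_k})\le M$. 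Consequently \eqref{ro1} makes the column sums $s_j^{n_k}$ pairwise comparable across $j$, and the identity $A^{n_k}\la^{n_k}=\la$ from Proposition~\ref{proposition}(1), combined with $\sum_j\la^{n_k}_js^{n_k}_j=1$, yields that the individual tower measures $\la_j^{n_k}s_j^{n_k}$ are also pairwise comparable.

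Pick $j_k\in\{1,\ldots,d\}$ maximizing $\la^{n_k}_j$, set $\Delta_n$ to be a suitably placed sub-interval of $I^{n_k}_{j_k}$, $q_n:=s^{n_k}_{j_k}$, and $W_n:=\bigcup_{i=0}^{q_n-1}T^iJ_n$ with $J_n$ as in \eqref{eq:jn}. By Remark~\ref{rozkladnawieze}, every level $T^i\Delta_n$ of the tower is an interval contained in a single exchanged sub-interval of the original IET, so each restricted map $T^i\colon\Delta_n\to T^i\Delta_n$ is an isometric translation. Consequently $T^{q_n}$ acts on $\Delta_n$ as translation by some $c_n\in\re$ and $T^{2q_n}$ as translation by $2c_n$, with the trivial bound $|c_n|\le|I^{n_k}|-|\Delta_n|$ and $|\Delta_n|\to 0$ along the subsequence.

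Conditions \eqref{wl3} and \eqref{wl4} then follow directly from Lemma~\ref{metr}: for $x=T^iy\in T^iJ_n\subset W_n$ the isometry of $T^i$ on $\Delta_n$ gives $|T^{q_n}x-x|=|T^{q_n}y-y|\le|\Delta_n|$ and $|T^{2q_n}x-x|\le|\Delta_n|$, and both upper bounds tend to zero. Condition \eqref{wl2} is immediate from $W_n\triangle T^{-1}W_n\subset J_n\cup T^{q_n-1}J_n$, of Lebesgue measure $\le2|\Delta_n|\to 0$. Condition \eqref{wl1} requires writing $Leb(W_n)=q_n|J_n|=(q_n|\Delta_n|)\cdot(|J_n|/|\Delta_n|)$ and controlling both factors: Step~1 and the maximizing choice of $j_k$ give a positive lower bound for the tower measure $q_n|\Delta_n|=s^{n_k}_{j_k}\la^{n_k}_{j_k}$ of the form $1/(dM)$, while $|J_n|=|\Delta_n|-2|c_n|$ whenever $|c_n|<|\Delta_n|/2$, giving $|J_n|/|\Delta_n|\ge 1/2$ under that condition. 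Passing to a further subsequence then delivers $Leb(W_n)\to\al\in(0,1]$.

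The main obstacle is precisely to guarantee the sharp inequality $|c_n|\le|\Delta_n|/3$, say: the raw bound $|c_n|\le(d-1)|\Delta_n|$ provided by the balance at time $n_k$ is too weak to ensure that $J_n$ is non-negligible, and without care $J_n$ may even be empty. The resolution, which is the technical heart of the construction and the reason the argument is modelled on \cite{Veech3} and \cite{Katok}, is to descend further into the Rauzy-Veech tower --- exploiting strict positivity of the subsequent blocks $B_{k+\ell}$ --- and replace $\Delta_n$ with a suitably placed sub-interval of $I^{n_k}_{j_k}$ whose first-return translation is a small fraction of its length, while preserving the lower bound on the tower measure $q_n|\Delta_n|$. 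Once this refinement is available, the four conditions \eqref{wl1}--\eqref{wl4} follow from the computations above and the interval-tower structure, completing the proof.
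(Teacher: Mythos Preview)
Your outline captures the right architecture---Rauzy--Veech towers, recurrence via Theorem~\ref{rauzy}, and the $\rho$-bound from \eqref{ro2}---and you correctly isolate the decisive obstacle: one must guarantee that the first-return translation $c_n$ is a small fraction of $|\Delta_n|$, so that $J_n$ occupies a definite proportion of $\Delta_n$. But you do not resolve it. The closing paragraph only gestures at ``descending further'' and taking a sub-interval, without specifying which one or why it works; and note that shrinking $\Delta_n\subset I^{n_k}_{j_k}$ while keeping $q_n=s^{n_k}_{j_k}$ makes the ratio $|c_n|/|\Delta_n|$ \emph{worse}, since $c_n$ is the translation of the induced IET on $I^{n_k}$ and is unchanged when you pass to a sub-interval.

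The paper's resolution is concrete and different from what you sketch. One does not return merely to a set of balanced parameters; one uses Theorem~\ref{rauzy} to return to an open set $Y\subset\Lambda^d$ in which one coordinate dominates, say $\la_1>(1-\de)$. Precomposing with a fixed positive block $B=A^m(\pi_0,\la_0)$ (so that \eqref{ro2} still yields $\rho(A^{m+r_n})\le\rho(B)$), one lands at a step with $|I_1^{m+r_n}|>(1-\de)|I^{m+r_n}|$. Taking $\Delta_n:=I_1^{m+r_n}$ and $q_n:=s_1^n$, the return translation satisfies $|c_n|\le\de|I^{m+r_n}|$, whence $|J_n|\ge(1-3\de)|I^{m+r_n}|$ and $Leb(W_n)>1-\ep$. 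This not only closes the gap but gives $\al$ arbitrarily close to $1$, which is actually needed downstream: the application to Lemma~\ref{general}\eqref{Ln0} requires $\al>1/2$, and a balanced-tower estimate of the form $\al\ge 1/(dM)$ would not suffice there.

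Two smaller inaccuracies. First, $A^1(\pi',\la')$ is always an elementary matrix and never strictly positive; you mean a fixed block of length $m$ as in Proposition~\ref{proposition}(3). Second, $T^{2q_n}$ does not act as translation by $2c_n$ on all of $\Delta_n$: the image $T^{q_n}\Delta_n$ lies in $I^{n_k}$ but need not lie in $I^{n_k}_{j_k}$, so $q_n$ is in general not the next return time on that image. The statement holds only on $J_n$, which is enough for \eqref{wl3}--\eqref{wl4} via Lemma~\ref{metr}, but should be stated precisely.
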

\begin{proof}
Fix $(\pi_0,\la_0)\in S_d^0\times\Lambda^d$  and let $G$ be the Rauzy graph containing $\pi_0$. By Proposition~\ref{proposition}, there exists  $m\geq1$
such that $B:=A^m(\pi_0,\la_0)\in SL_d(\z)$ is a positive matrix and $\pi_0^m=\pi_0$. Let $\ep>0$ and let $\de>0$ be such that
\begin{equation}\label{def:epsdelta}
0<6\de<\ep\quad \text{ and } \quad(1-3\de)\Big(1-\rho(B)\frac{\de}{1-\de}\Big)>1-\ep.
\end{equation}
Let $Y\subset \Lambda^d$ be the set of
$\la$ such that
\begin{equation*}
\la_1>(1-\de)|\la|\ \text{ and }\ \la_j>\frac{\de}{2d}|\la|\text{ for }2\le j\le d.
\end{equation*}
Of course, $Y$ is open. Moreover, let $V=\{(\pi_0,\frac{B\la}{|B\la|}):\la\in Y\}$. Since $B\in SL_d(\mathbb Z)$, the set $V$ is also open in
$G\times\Lambda^d$. Let $\zeta_{G}$ be the measure on $G\times\Lambda^d$ obtained in Theorem \ref{rauzy}. Since $\zeta_{G}(V)>0$, by the ergodicity and
recurrence of $R$, for a.e.\ $(\pi,\la)\in G\times\Lambda^d$, there exists an increasing sequence $\{r_n\}_{n\in\n}$ such that
$(\pi^{r_n},\la^{r_n}/|\la^{r_n}|)=R^{r_n}(\pi,\la)\in V$. Therefore, $\pi_0=\pi^{r_n}$  and $\la^{r_n}\in A^m(\pi_0,\la_0)\la'$ for some $\la'\in\re^d_+$
such that $\la'/|\la'|\in Y$. In view of Remark~\ref{mnozmac}, we have
\[A^m(\pi^{r_n},\la^{r_n})=A^m(\pi_0,A^m(\pi_0,\la_0)\la')=A^m(\pi_0,\la_0)\text{ and }R^m(\pi^{r_n},\la^{r_n})=(\pi_0,\la'/|\la'|),\]
so $R^{m+r_n}(\pi,\la)\in \{\pi_0\}\times Y$. Moreover, by Proposition~\ref{proposition}, it follows that
\[
A^{m+r_n}(\pi,\la)=A^{r_n}(\pi,\la)\cdot A^m(\pi^{r_n},\la^{r_n})=A^{r_n}(\pi,\la)\cdot B.
\]
Hence, by \eqref{ro2}, we obtain that
\begin{equation}\label{ro3}
\rho(A^{m+r_n}(\pi,\la))\le\rho(B).
\end{equation}
Since $R^{m+r_n}(\pi,\la)=(\pi^{m+r_n},\frac{\la^{m+r_n}}{|\la^{m+r_n}|})\in \{\pi_0\}\times Y$, we have
\begin{equation}\label{ro4}
\la_1^{m+r_n}>(1-\de)|\la^{m+r_n}|,
\end{equation}
\begin{equation}\label{ro5}
\la_j^{m+r_n}>\frac{\de}{2d}|\la^{m+r_n}|\quad\text{  for  }\quad 2\le j\le d.
\end{equation}
Let $T:=T_{\pi,\la}$, $s_j^{n}:=\sum_{i=1}^{d}A^{m+r_n}_{ij}$ and
\[
J^n:=I^{m+r_n}_1\cap T^{-s_1^n}I^{m+r_n}_1 \cap T^{-2s_1^n}I^{m+r_n}_1.
\]
Since $T^{s_1^n}(I_1^{m+r_n})\subset I^{m+r_n}$ (see Remark~\ref{rozkladnawieze}) and $Leb$ is $T_{\pi,\la}$-invariant, we have
\[
Leb(J^n)=Leb(I^{m+r_n}_1\cap T^{s_1^n}I^{m+r_n}_1\cap T^{2s_1^n}I^{m+r_n}_1),
\]
and $T^{s_1^n}I^{m+r_n}_1\cap T^{2s_1^n}I^{m+r_n}_1\subset I^{m+r_n}$. Moreover, by \eqref{ro4},
\[
Leb(T^{s_1^n}I^{m+r_n}_1\cap T^{2s_1^n}I^{m+r_n}_1)=Leb(I^{m+r_n}_1\cap T^{s_1^n}I^{m+r_n}_1)\ge (1-2\de)|I^{m+r_n}|.
\]
Combining this with \eqref{ro4}, we have
\begin{equation}\label{szac}
Leb(J^n)=Leb(I^{m+r_n}_1\cap T^{s_1^n}I^{m+r_n}_1\cap T^{2s_1^n}I^{m+r_n}_1)\ge(1-3\de)|I^{m+r_n}|.
\end{equation}
Since $\{T^iI_1^{m+r_n}:0\leq i<s_1^n\}$ is a Rokhlin tower (see Remark~\ref{rozkladnawieze}), then so does $\{T^iJ^n:0\leq i<s_1^n\}$, that is
\begin{equation}\label{rozl}
J^n\cap T^lJ^n=\emptyset\text{  for  }1\le l<s_1^n,
\end{equation}
As  $T^{s_1^n}J^n\subset T^{s_1^n}(I_1^{m+r_n})\subset I^{m+r_n}$, by \eqref{szac} and \eqref{def:epsdelta},we have
\[
Leb(J^n\cap T^{s_1^n}J^n)>(1-6\de)|I^{m+r_n}|>(1-\ep)|I^{m+r_n}|.
\]
By Remark~\ref{rozkladnawieze}, we have that
$|\la|=\sum_{j=1}^d\la_j^{m+r_n}s^{n}_j$.  In view of \eqref{ro1},
\eqref{ro3} and \eqref{ro4}, it follows that
\[
\begin{split}
|\la|-s^n_1\la^{m+r_n}_1&=\sum_{j=2}^ds^n_j\la^{m+r_n}_j\le\rho(A^{m+r_n})s^n_1\de|\la^{m+r_n}|\le\rho(B)s^n_1\de|\la^{m+r_n}|\\&\le
\rho(B)s^n_1\frac{\de}{1-\de}\la^{m+r_n}_1< \rho(B)\frac{\de}{1-\de}|\la|.
\end{split}
\]
Therefore
\begin{equation}\label{neq:sn1}
s^n_1\la_1^{m+r_n}>\Big(1-\rho(B)\frac{\de}{1-\de}\Big)|\la|,
\end{equation}
which, by \eqref{szac}, \eqref{rozl} and \eqref{def:epsdelta}, implies that
\begin{equation}\label{lebeg}
\begin{split}
Leb(\bigcup_{l=0}^{s^n_1-1}T^lJ^n)&=s_1^nLeb(J^n)\ge (1-3\de)s_1^n|I^{m+r_n}|\\&>(1-3\de)\Big(1-\rho(B)\frac{\de}{1-\de}\Big)|\la|>(1-\ep)|\la|.
\end{split}
\end{equation}
Set
\begin{equation}\label{defwiezy}
q_n:=s^n_1\quad\text{ and }\quad
W_n:=\bigcup_{l=0}^{q_n-1}T^lJ^n.
\end{equation}
Since we assumed that $\la\in\Lambda^d$ (i.e.\ $|\lambda|=1$), by \eqref{lebeg} and by passing to a subsequence if necessary, we get
\begin{equation}\label{granica}
\lim_{n\to \infty}Leb(W_n)=\alpha\quad\text{  for some  }\quad 1\ge \alpha\ge 1-\ep.
\end{equation}
Thus condition \eqref{wl1} is satisfied.
Moreover
\begin{equation*}
Leb(W_n\triangle T^{-1}W_n)\le 2Leb(J^n)\to 0\quad\text{  as  }\quad n\to\infty,
\end{equation*}
which verifies condition \eqref{wl2}.

Note  that for every  $x\in W_n$
\begin{equation}\label{eq:nal}
\text{there exists $0\leq l<q_n$ such that $x,T^{q_n}x,T^{2q_n}x\in T^lI^{m+r_n}_1$.}
\end{equation}
Indeed, there exists $0\le l<q_n$  such that
\[x\in T^lJ^n=T^lI^{m+r_n}_1\cap T^{-q_n+l}I^{m+r_n}_1 \cap T^{-2q_n+l}I^{m+r_n}_1\]
and \eqref{eq:nal} follows.
From Remark~\ref{mnozmac}, $T^lI^{m+r_n}_1$ is an interval of length $|I^{m+r_n}_1|<|I^{m+r_n}|$.
Since $|I^{m+r_n}|\to 0$, it follows that
\begin{equation*}
\lim_{n\to\infty}\sup_{x\in W_n}|T^{q_n}x-x|=0\quad\text{ and }\quad
\lim_{n\to\infty}\sup_{x\in W_n}|T^{2q_n}x-x|=0.
\end{equation*}
By Lemma \ref{metr}, this implies \eqref{wl3} and \eqref{wl4} with $q_n'=2q_n$.
\end{proof}

\begin{lemma}\label{GRAN}
Assume that $\{W_n\}_{n\in\n}$ and $\{q_n\}_{n\in\n}$ are sequences constructed in the proof of Lemma \ref{konst}. Then for
every  $x\in W_n$ and $0\leq j<q_n$ the points $T^jx$ and
$T^{q_n+j}x$ belong to the same exchanged interval $I_i$ for some $i=1,\ldots,d$. Moreover, there exists a sequence $\{\gamma_n\}_{n\in\n}$
of positive numbers such that
\[
T^{q_n+j}x-T^jx=\g_n/q_n\text{ for all }x\in W_n,\ 0\leq
j<q_n\quad \text{and}\quad
\liminf_{n\to\infty}\gamma_n=\gamma>0.\]
\end{lemma}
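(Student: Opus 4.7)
The plan is to exploit the Rokhlin tower structure from Remark~\ref{rozkladnawieze} together with the fact that $T^{q_n}|_{I_1^{m+r_n}}$ acts as a translation. Since each level $T^iI_1^{m+r_n}$ (for $0\le i<q_n$) is contained in a single exchanged interval $I_k$, the restriction of $T$ to that level is a translation, and consequently $T^{q_n}|_{I_1^{m+r_n}}$ is a translation by some constant shift $c_n\ge 0$.

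To verify the first claim, I would fix $x\in W_n$ and write $x=T^ly$ with $0\le l<q_n$ and $y\in J^n$; by \eqref{eq:jn} this ensures $y,T^{q_n}y,T^{2q_n}y\in I_1^{m+r_n}$. For $0\le j<q_n$, splitting into the cases $l+j<q_n$ and $l+j\ge q_n$, one checks that $T^jx$ and $T^{q_n+j}x$ can be written as $T^{l'}y'$ and $T^{l'}y''$ respectively, with $l'\in\{l+j,\,l+j-q_n\}\subset\{0,\ldots,q_n-1\}$ and $y',y''\in\{y,T^{q_n}y,T^{2q_n}y\}\subset I_1^{m+r_n}$. Here the third factor $T^{-2q_n}I_1^{m+r_n}$ in the definition of $J^n$ is precisely what handles the case $l+j\ge q_n$. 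Both $T^jx$ and $T^{q_n+j}x$ thus belong to the same tower level $T^{l'}I_1^{m+r_n}$, hence to the same exchanged interval $I_i$. Because $T^{l'}$ is a translation on $I_1^{m+r_n}$, the difference $T^{q_n+j}x-T^jx=y''-y'$ equals either $T^{q_n}y-y$ or $T^{2q_n}y-T^{q_n}y$, and both of these equal $c_n$. Setting $\gamma_n:=q_nc_n$ then yields the desired identity $T^{q_n+j}x-T^jx=\gamma_n/q_n$.

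For $\liminf_n\gamma_n>0$, I would observe that $T^{q_n}|_{I_1^{m+r_n}}$ coincides with the action of the $(m+r_n)$-th Rauzy-Veech induced IET, whose permutation is $\pi^{m+r_n}=\pi_0$. Consequently the subinterval $I_1^{m+r_n}$ is sent to position $\pi_0(1)$, giving
\[
c_n=\sum_{k:\,\pi_0(k)<\pi_0(1)}\la_k^{m+r_n}.
\]
Irreducibility of $\pi_0\in S_d^0$ with $d\ge 2$ forces $\pi_0(1)\ge 2$, so the sum is non-empty and involves only indices $k\neq 1$. By \eqref{ro5} each such $\la_k^{m+r_n}\ge\frac{\delta}{2d}|\la^{m+r_n}|$, so $c_n\ge\frac{\delta}{2d}|\la^{m+r_n}|\ge\frac{\delta}{2d}\la_1^{m+r_n}$; multiplying by $q_n$ and invoking \eqref{neq:sn1} gives
\[
\gamma_n=q_nc_n\ge\frac{\delta}{2d}\Bigl(1-\rho(B)\frac{\delta}{1-\delta}\Bigr),
\]
a positive constant independent of $n$. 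The only real subtlety is the case analysis guaranteeing that $T^jx$ and $T^{q_n+j}x$ always lie in the same tower level for every $0\le j<q_n$, which is precisely the motivation for defining $J^n$ as a \emph{triple} intersection rather than a double one.
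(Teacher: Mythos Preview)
Your proposal is correct and follows essentially the same route as the paper: both identify $T^{q_n}|_{I_1^{m+r_n}}$ with the induced IET of permutation $\pi_0$ to compute the constant shift $c_n=\sum_{\pi_0(k)<\pi_0(1)}\la_k^{m+r_n}$, use irreducibility of $\pi_0$ together with \eqref{ro5} and \eqref{neq:sn1} for the uniform lower bound on $\gamma_n=q_nc_n$, and carry out the same case split $l+j<q_n$ versus $l+j\ge q_n$ (relying on the triple intersection in $J^n$) to place $T^jx$ and $T^{q_n+j}x$ in a common tower level. Your phrasing via $y',y''\in\{y,T^{q_n}y,T^{2q_n}y\}$ is a cosmetic variant of the paper's computation $T^{q_n+j}x-T^jx=T^{q_n+j-k}x-T^{j-k}x$ with $T^{j-k}x\in I_1^{m+r_n}$.
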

\begin{proof}
Let $(\pi^{r_n+m},\la^{r_n+m})=\widehat{R}^{r_n+m}(\pi,\la)\in \{\pi_0\}\times \re_+ Y$ and let $\widetilde{T}:=T_{\pi^{r_n+m},\la^{r_n+m}}$. Then
$\widetilde{T}:I^{m+r_n}\to I^{m+r_n}$ is an IET such that $\widetilde{T}x=T^{q_n}x$ for $x\in I^{m+r_n}_1$. Moreover, since $\pi^{r_n+m}=\pi_0$, for all
$x\in I^{m+r_n}_1$ we have
\[\widetilde{T}x=x+\sum_{\pi_0(i)<\pi_0(1)}|I^{m+r_n}_i|.\]
As $\pi\in S_d^0$ and $\la^{r_n+m}/|\la^{r_n+m}|\in Y$, there exists $2\le p\le d$ such that $\pi_0(p)<\pi_0(1)$ and
\[\gamma_n:=q_n\sum_{\pi_0(i)<\pi_0(1)}|I^{m+r_n}_i|\geq q_n|I^{m+r_n}_p|\geq\frac{\de}{2d}q_n|I^{m+r_n}|.\]
The last inequality follows from \eqref{ro5}. In view of
\eqref{neq:sn1}, it follows that
\[\gamma_n\geq \frac{\de}{2d}q_n|I^{m+r_n}_1|\geq \frac{\de}{2d}\Big(1-\rho(B)\frac{\de}{1-\de}\Big)>0.\]
In summary, for every $x\in I^{m+r_n}_1$ we have $T^{q_n}x-x=\gamma_n/q_n$ and the sequence $\{\gamma_n\}_{n\in\n}$ of positive numbers is separated from zero.

Let $x\in W_n$ and $0\leq j<q_n$. By \eqref{eq:nal}, there exists $0\leq l<q_n$ such that $x,T^{q_n}x,T^{2q_n}x\in T^lI^{m+r_n}_1$. It follows that
$T^{j}x,T^{q_n+j}x\in T^kI^{m+r_n}_1$ for some $0\leq k<q_n$. Indeed, if $0\leq l+j<q_n$ then we take $k:=l+j$ and  if $q_n\leq l+j<2q_n$ then
$k:=l+j-q_n$. By Remark~\ref{rozkladnawieze}, $T^{j}x,T^{q_n+j}x\in T^kI^{m+r_n}_1\subset I_i$ for some $0\leq i<q_n$ and $T^k$ acts on the interval
$I^{m+r_n}_1$ as a translation. Therefore,
\[T^{q_n+j}x-T^{j}x=T^{q_n+j-k}x-T^{j-k}x\text{ with }T^{j-k}x\in I^{m+r_n}_1,\]
so $T^{q_n+j}x-T^{j}x=\gamma_n/q_n$. As $\liminf_{n\to\infty} \g_n>0$, this
completes the proof.
\end{proof}


\begin{proof}[Proof of  Lemma \ref{general} under assumption \ref{Ln0}]
Let $W_n$ be the tower obtained in \eqref{defwiezy} and let
$\{q_n\}_{n\in\n}$ be the  sequence of integer numbers defined in
\eqref{defwiezy}. Take $q_n'=2q_n$. Then, by Lemma \ref{konst},
conditions $\eqref{wl1}-\eqref{wl4}$ are satisfied. Moreover,
since $f$ is of bounded variation, we can apply Lemma~\ref{konst2}
to $\Delta=I_1^{m+r_n}$ and $q=q_n$. This yields a real sequence
$\{a_n\}_{n\in\n}$  and $P\in\mathcal{P}(\re^2)$ such that
\[
|f^{(q_n)}(x)-a_n|\leq Var(f)\quad\text{ and
}\quad|f^{(2q_n)}(x)-2a_n|\leq 2Var(f)\text{ for all }x\in
W_n\text{ and }n\geq \n.
\]
and
\[
P=\lim_{n\to
\infty}\big(f^{(2q_n)}-2a_n,f^{(q_n)}-a_n\big)_*(Leb_{W_n})\quad\text{
weakly in }\quad\mathcal{P}(\re^2).
\]
Thus \eqref{wl5}-\eqref{wl7} are also satisfied. By Lemma
\ref{GRAN}, for  every $x\in W_n$ and $0\leq j<q_n$ the point
$T^jx$ and $T^{q_n+j}x$ belong to an interval $I_i$ and
$T^{q_n+j}x-T^{j}x=\g_n q_n$. Since $f$ is a linear function on
$I_i$ with slope $s$, we obtain $f(T^{q_n+j}x)-f(T^{j}x)=\g_n
q_n$. Therefore, for every $x\in W_n$ we have
\[
f^{(q_n)}\circ
T^{q_n}(x)-f^{(q_n)}(x)=\sum_{j=0}^{q_n-1}(f(T^{q_n+j}x)-f(T^jx))=s\sum_{i=0}^{q_n-1}(T^{q_n+j}x-T^jx)=s\g_n\to
s\gamma
\]
with $\gamma>0$. Thus, by \eqref{eq:xip}, $ \xi_*P=\de_{s\gamma} $
with $s\gamma\neq 0$. By \eqref{granica}, we can take $\lim
Leb(W_n)=\al>0$ arbitrary close to 1. Choosing $\al>\frac{1}{2}$
guarantees that the measure $\xi_*P=\de_{s\gamma}$ satisfies
\eqref{neq:condme}, which concludes the proof.
\end{proof}

\section{Piecewise constant roof functions}\label{sec:pieconst}
In this section we prove  Lemma \ref{general} in the case  of
piecewise constant roof function.
We will need the following general lemma.
\begin{lemma}[cf.\ \cite{King}]\label{rokhlin}
Let $T$ be an ergodic automorphism of a standard probability space
$(X,\mathcal B,\mu)$.  Let $\{W_n\}_{n\in\n}$ be a sequence of
Rokhlin towers such that $\liminf_{n\to\infty}\mu(W_n)>0$ and
$\mu(J_n)\to 0$, where $J_n$ is the basis of the tower $W_n$. Then
almost every $x\in X$ we $x\in W_n$ for infinitely many $n\in\n$.
\end{lemma}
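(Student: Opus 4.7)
The plan is to prove that $A:=\limsup_{n}W_n=\bigcap_{N\in\n}\bigcup_{n\ge N}W_n$ has full measure, which is exactly the conclusion. First I would pass to a subsequence $\{n_k\}_{k\in\n}$ chosen so that $\mu(W_{n_k})\ge \alpha$ for some $\alpha>0$ (possible because $\liminf_n\mu(W_n)>0$) and simultaneously $\sum_{k\in\n}\mu(J_{n_k})<\infty$ (possible because $\mu(J_n)\to 0$, so we can take $\mu(J_{n_k})<2^{-k}$). It suffices to show that $B:=\limsup_{k}W_{n_k}$ has full measure, since $B\subseteq A$.

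Next I would exploit the Rokhlin tower structure $W_n=\bigsqcup_{i=0}^{h_n-1}T^iJ_n$. A direct computation shows $T^{-1}W_n\triangle W_n\subseteq T^{-1}J_n\cup T^{h_n-1}J_n$, so $\mu(T^{-1}W_n\triangle W_n)\le 2\mu(J_n)$. Hence the series $\sum_{k}\mu(T^{-1}W_{n_k}\triangle W_{n_k})$ converges, and the Borel--Cantelli lemma yields a full-measure set $X_0$ of points $x$ such that $x\in T^{-1}W_{n_k}\triangle W_{n_k}$ for only finitely many $k$. In other words, for every $x\in X_0$ there exists $K(x)$ such that for all $k\ge K(x)$, membership of $x$ in $W_{n_k}$ agrees with membership of $x$ in $T^{-1}W_{n_k}$.

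Now I would use this to show that $B$ is $T$-invariant modulo $\mu$. Fix $x\in X_0$. Then $x\in B$ if and only if $x\in W_{n_k}$ for infinitely many $k$, which (using the agreement past $K(x)$) happens if and only if $x\in T^{-1}W_{n_k}$ for infinitely many $k$, i.e., $Tx\in W_{n_k}$ infinitely often, i.e., $Tx\in B$. Therefore $B\triangle T^{-1}B\subseteq X\setminus X_0$ has $\mu$-measure zero. By the reverse Fatou inequality for sets in a finite measure space, $\mu(B)\ge \limsup_{k}\mu(W_{n_k})\ge \alpha>0$, and ergodicity of $T$ forces $\mu(B)\in\{0,1\}$, hence $\mu(B)=1$. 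Since $B\subseteq A$, we conclude $\mu(A)=1$.

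The main point to be careful about is that the symmetric-difference bound $\mu(T^{-1}W_n\triangle W_n)\le 2\mu(J_n)$ only gives a sequence tending to zero rather than a summable one, which is why the subsequence extraction is needed before Borel--Cantelli can be invoked. Everything else is straightforward bookkeeping: the Rokhlin tower identity for the symmetric difference, the (reverse) Fatou bound for $\mu(B)$, and the standard dichotomy from ergodicity applied to the $T$-invariant set $B$.
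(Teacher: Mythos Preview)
Your argument is correct. The paper does not supply its own proof of this lemma; it simply records the statement with a reference to \cite{King}, so there is nothing to compare against. Your approach---extracting a subsequence to make $\sum_k\mu(J_{n_k})$ summable, using Borel--Cantelli on the symmetric differences $T^{-1}W_{n_k}\triangle W_{n_k}$ to get almost-sure eventual agreement, and then invoking ergodicity on the invariant set $B=\limsup_k W_{n_k}$ together with the reverse Fatou bound $\mu(B)\ge\limsup_k\mu(W_{n_k})$---is a clean and self-contained proof.
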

We will apply a construction and arguments similar  to those shown
in the proof of Lemma~\ref{konst} as well as some notation. As in
the proof of Lemma~\ref{konst}, the set $W_n$ arises from a
tower $\{T^i\Delta_n:0\leq i<q_n\}$ of intervals. In the proof of
Lemma~\ref{konst} we used a dominating tower derived from the
splitting of $[0,1)$ into towers (guaranteed by
Remark~\ref{rozkladnawieze}) and a specific choice of iterates for
Rauzy-Veech induction. In the present proof we will base on two
dominating towers and, roughly speaking, $\{T^i\Delta_n:0\leq
i<q_n\}$ will be a composition of these dominating towers. Next we
will construct a family of subtowers $W_n^l\subset W_n$ for
$l=1,\ldots,r$  that satisfy the assumption of
Lemma~\ref{rokhlin}. Lemma~\ref{rokhlin} ensures the existence of
a set $D\subset[0,1)^r$ of full Lebesgue measure such that for
every $(\beta_1,\ldots,\beta_r)\in D$ we have
$(\beta_1,\ldots,\beta_r)\in W_n^1\times\ldots\times W_n^r$ for
infinitely many $n\in\n$. In the final step of the proof we will
show that every choice the discontinuity points
$\beta_1,\ldots,\beta_r$ of the roof function $f$ so that
$(\beta_1,\ldots,\beta_r)\in D$ implies that  $\xi_*P$ is a
discrete measure satisfying the crucial condition
\eqref{neq:condme}.

\begin{proof}[Proof of  Lemma \ref{general} under assumption \ref{L0}]
Fix $\pi_0\in G_\pi$ such that $\pi_0(1)=d$ and $\pi_0(d)=1$ and
$\lambda_0\in \Lambda^d$ such that $(\pi_0,\la_0)$ satisfies
Keane's condition. By Theorem~\ref{proposition}, there exists
$m\in\n$ such that $B:=A^m(\pi_0,\la_0)$ has positive entries and
$\pi_0^m=\pi_0$. Let $0<\ep<\min\big(1/\rho(B)10,1/8(2r+1)\big)$
and let $\ep/3<\de'<\de<\ep/2$ be such that
\begin{equation}\label{neq:delty}
\de-\de'<\frac{\ep}{4\rho(B)}.
\end{equation}
Let $Y\subset\Lambda^d$ stand for the set of $\la\in\Lambda^d$
such that
\[\frac{1}{2}-\de<\la_1<\frac{1}{2}-\de+\frac{\de-\de'}{4},\qquad \frac{1}{2}+\de'<\la_d<\frac{1}{2}+\de'+\frac{\de-\de'}{4}.\]
Then
\[\frac{\ep}{2}<\frac{2}{3}\ep-\frac{1}{16}\ep<\de'+\de-\frac{\de-\de'}{4}<\la_d-\la_1<\de'+\de+\frac{\de-\de'}{4}<2\de.\]
Since $Y\subset\Lambda^d$ is open, by the arguments used in the
proof of Lemma \ref{konst}, for almost every $\la\in\Lambda^d$
there exists an increasing sequence of natural numbers
$\{r_n\}_{n\in\n}$ such that
$(\pi^{m+r_n},\la^{m+r_n})=\widehat{R}^{m+r_n}(\pi,\la)\in
\{\pi_0\}\times \re_+ Y$. Therefore
\begin{gather}
\la^{m+r_n}_1>\Big(\frac{1}{2}-\de\Big)|\la^{m+r_n}|,\quad\la^{m+r_n}_d>\Big(\frac{1}{2}+\de'\Big)|\la^{m+r_n}|,
\label{szacwiez}\\
\frac{\ep}{2}|\la^{m+r_n}|<\la^{m+r_n}_d-\la^{m+r_n}_1<2\de|\la^{m+r_n}|.\label{szacwiez1}
\end{gather}
Set $s_j^n=\sum_{i=1}^dA^{m+r_n}_{ij}$ for $j=1,\ldots,d$ and let
\[
J^n:=I_1^{m+r_n}\cap T^{-s_1^n-s_d^n}I_1^{m+r_n}\cap
T^{2(-s_1^n-s_d^n)}I_1^{m+r_n}.
\]
Let us consider the IET
$\widetilde{T}:=T_{\pi^{m+r_n},\la^{m+r_n}}$. Then
$\widetilde{T}=T^{s_1^n}$ on $I_1^{m+r_n}$ and
$\widetilde{T}=T^{s_d^n}$ on $I_d^{m+r_n}$. Since
$\pi^{m+r_n}=\pi_0$ and $\pi_0(1)=d$, $\pi_0(d)=1$, the interval
$I_1^{m+r_n}$ is translated by $T^{s_1^n}$ ($=\widetilde{T}$) to
an interval ending at the end of $I^{m+r_n}$ and $I_d^{m+r_n}$ is
translated by $T^{s_d^n}$ ($=\widetilde{T}$) to an interval
starting at $0$. Therefore, the first translation is by
$|I^{m+r_n}|-|I_1^{m+r_n}|$ and the second translation is by
$|I^{m+r_n}_d|-|I^{m+r_n}|$. Moreover, in view of
\eqref{szacwiez}, the interval $T^{s_1^n}I_1^{m+r_n}$ is shorter
than $I_d^{m+r_n}$, so
\begin{equation}\label{inc:s1}
T^{s_1^n}I_1^{m+r_n}\subset I_d^{m+r_n}.
\end{equation}
It follows that
\begin{equation}\label{eq:transl}
\text{$T^{s_1^n+s_d^n}$ translates $I_1^{m+r_n}$  by
$|I_d^{m+r_n}|-|I_1^{m+r_n}|=\la^{m+r_n}_d-\la^{m+r_n}_1>0$.}
\end{equation}
Therefore, $J^n$ is an interval starting from $0$ whose length is
$\la^{m+r_n}_1-2(\la^{m+r_n}_d-\la^{m+r_n}_1)$. Since
$0<2\de<\ep<1/10$, by \eqref{szacwiez1},
\begin{equation}\label{neq:rozlambdy}
\la^{m+r_n}_d-\la^{m+r_n}_1< 2\delta|I^{m+r_n}|<
\ep|I^{m+r_n}|<\frac{1}{10}|I^{m+r_n}|.
\end{equation}
In view of \eqref{szacwiez} and $\de<1/20$, it follows that
\begin{equation}\label{miarajn}
|J^n|=\la^{m+r_n}_1-2(\la^{m+r_n}_d-\la^{m+r_n}_1)>\Big(\frac{1}{2}-\de-\frac{1}{5}\Big)|I^{m+r_n}|>\frac{1}{4}|I^{m+r_n}|.
\end{equation}
By Remark~\ref{rozkladnawieze}, $\{T^i I_1^{m+r_n}:0\leq
i<s_1^n\}$ and $\{T^i I_d^{m+r_n}:0\leq i<s_1^n\}$ are Rokhlin
towers of intervals and $\sum_{j=1}^ds_j^n|I^{m+r_n}_j|=1$. Since
$T^{s_1^n}I_1^{m+r_n}\subset I_d^{m+r_n}$ (see \eqref{inc:s1}) and
$J^n\subset I_1^{m+r_n}$, we have that
\[\{T^i I_1^{m+r_n}:0\leq
i<s_1^n+s_d^n\}\quad\text{ and }\quad\{T^i J^n:0\leq
i<s_1^n+s_d^n\}\] are Rokhlin towers of intervals as well.
Let
\[q_n:=s_1^n+s_d^n\quad\text{and}\quad W_n:=\bigcup_{i=0}^{s_1^n+s_d^n-1}T^iJ^n.\]
Taking $\Delta_n:=I_1^{m+r_n}$, we have \eqref{eq:jn}.
In view of \eqref{ro1} and \eqref{ro2},
\begin{equation}\label{ro7}
s_j^n\le \rho(A^{m+r_n}(\pi,\la))s_k^n\le \rho(B)s_k^n\quad \text{ for }\quad 1\leq j,k\leq d.
\end{equation}
It follows that
\begin{equation}
\label{liminf1}
s_1^n|I^{m+r_n}|\geq\sum_{j=1}^d\frac{s_j^n}{\rho(B)}|I_j^{m+r_n}|=1/\rho(B).
\end{equation}
Thus, by \eqref{miarajn}, we have
\[
Leb(W_n)=(s_1^n+s_d^n)|J^n|>\frac{1}{4}(s_1^n+s_d^n)|I^{m+r_n}|\geq\frac{1}{4\rho(B)}.
\]
Hence, passing to a subsequence if necessary,  we have
\begin{equation}\label{limalfa}
\lim_{n\to\infty}Leb(W_n)=\alpha>0.
\end{equation}
Since$\{T^i I_1^{m+r_n}:0\leq i<q_n\}$ is a Rokhlin tower of
intervals such that $T$ acts on each its level by a translation,
by the arguments used in the proof of Lemma \ref{konst}, we have
that for every $x\in W_n$ there exists $0\leq l<q_n$ such that
$x,T^{q_n}x,T^{2q_n}x\in T^lI_1^{m+r_n}$ and for every $0\leq
i<q_n$ there exists $0\leq k<q_n$ such that $T^{i}x,T^{i+q_n}x\in
T^kI_1^{m+r_n}$.
%
It follows that
\begin{equation}\label{eq:szt1}
\sup_{x\in W_n}|T^{q_n}x-x|\leq|I^{m+r_n}|\to 0\text{ and }
\sup_{x\in W_n}|T^{2q_n}x-x|\leq|I^{m+r_n}|\to 0.
\end{equation}
Moreover, since $T^k$ acts on $I_1^{m+r_n}$ as a translation, by \eqref{eq:transl},
\begin{equation}\label{eq:latr}
T^{q_n}T^ix-T^ix=T^{q_n}T^{i-k}x-T^{i-k}x=\lambda^{m+r_n}_d-\lambda^{m+r_n}_1.\qquad(T^{i-k}x\in I_1^{m+r_n})
\end{equation}

Let us consider $r$ disjoint segments
\[J^n_l=\Big[\frac{(2l-1)|J^n|}{2r+1},\frac{2l|J^n|}{2r+1}\Big)\subset J^n\quad\text{ for
}l=1,\ldots,r.\] Set $W_n^l:=\bigcup_{i=0}^{q_n-1}T^iJ^n_l$. Since $\{T^iJ^n:0\leq i<q_n\}$ is a Rokhlin tower,
$\{T^iJ^n_l:0\leq i<q_n\}$ are also Rokhlin
towers for $1\leq l\leq r$. Note that the measure of each such tower is $\frac{1}{2r+1}$ of the measure of $W_n$. By $\eqref{limalfa}$, it follows that $\lim Leb(W_n^l)$ is positive for all $1\leq l\leq r$.
By Lemma \ref{rokhlin}, for almost every choice $(\beta_1,\ldots
\beta_r)\in[0,1)^r$ we have $\beta_l\in W^n_l$ for infinitely many $n\in\n$ for all $1\leq l\leq
r$. Consider now the sets
\[V^n_l=\bigcup_{i=0}^{q_n-1}T^{-i}\big[\beta_l-(\la^{m+r_n}_d-\la^{m+r_n}_1),\beta_l\big)\quad\text{
for }\quad l=1,\ldots,r.\] We now prove that $V^n_l$ for
$l=1,\ldots,r$ are pairwise disjoint Rokhlin towers contained in
$W_n$. Since $\beta_l\in W_n^l$, there exists $0\leq k=k(l)<q_n$
such that $T^{-k}\beta_l\in J^n_l$. Next note that, by
\eqref{neq:rozlambdy} and \eqref{miarajn},
\begin{equation}\label{szacprzes}
\la^{m+r_n}_d-\la^{m+r_n}_1<\ep|I^{m+r_n}|<\frac{1}{8(2r+1)}|I^{m+r_n}|<\frac{1}{2(2r+1)}|J^n|.
\end{equation}
Let us consider the interval
$\Big[\frac{2l-2}{2r+1}|J^n|,\frac{2l}{2r+1}|J^n|\Big)\subset
J^n$. Since $T^k$ acts on $J^n$ as a translation, the image
$T^k\Big[\frac{2l-2}{2r+1}|J^n|,\frac{2l}{2r+1}|J^n|\Big)$ is an
interval such that $\beta_l$ belongs to its right half. As the
length of this interval is $\frac{2}{2r+1}|J^n|$, by
\eqref{szacprzes}, we have
\begin{equation}\label{eq:beta1}
\big[\beta_l-2(\la^{m+r_n}_d-\la^{m+r_n}_1),\beta_l\big)\subset
T^{k}\Big[\frac{2l-2}{2r+1}|J^n|,\frac{2l}{2r+1}|J^n|\Big)\subset
W_n.
\end{equation}
Since $T^{q_n}$ acts on $W_n$ as the translation by $\la^{m+r_n}_d-\la^{m+r_n}_1$ (see \eqref{eq:latr}),
we have
\[T^{q_n}\big[\beta_l-2(\la^{m+r_n}_d-\la^{m+r_n}_1),\beta_l-(\la^{m+r_n}_d-\la^{m+r_n}_1)\big)=
\big[\beta_l-(\la^{m+r_n}_d-\la^{m+r_n}_1),\beta_l\big).\] It
follows that
\begin{equation}\label{eq:beta2}
T^{-q_n}\big[\beta_l-(\la^{m+r_n}_d-\la^{m+r_n}_1),\beta_l\big)\subset
T^{k}\Big[\frac{2l-2}{2r+1}|J^n|,\frac{2l}{2r+1}|J^n|\Big).
\end{equation}
Thus, by \eqref{eq:beta1}, for every $0\leq i \leq k$ we have
\[
T^{-i}[\beta_l-(\la^{m+r_n}_d-\la^{m+r_n}_1),\beta_l)\subset
T^{k-i}\Big[\frac{2l-2}{2r+1}|J^n|,\frac{2l}{2r+1}|J^n|\Big)\subset
W_n\] and, by \eqref{eq:beta2}, for every $k<i<q_n$ we have
\[
T^{-i}[\beta_l-(\la^{m+r_n}_d-\la^{m+r_n}_1),\beta_l)\subset
T^{q_n+k-i}\Big[\frac{2l-2}{2r+1}|J^n|,\frac{2l}{2r+1}|J^n|\Big)\subset
W_n.\] Since $\{T^jJ^n:0\leq j<q_n\}$ is a Rokhlin tower,
$\big\{T^{j}\big[\frac{2l-2}{2r+1}|J^n|,\frac{2l}{2r+1}|J^n|\big):0\leq
j<q_n\big\}$ are disjoint Rokhlin towers for $l=1,\ldots,r$. Each
interval
$T^{-i}\big[\beta_l-(\la^{m+r_n}_d-\la^{m+r_n}_1),\beta_l\big)$ is
a subset of a level of the $l$-th tower and for $0\leq i<q_n$ the
intervals are distributed in different levels of the $l$-th tower.
Therefore, $V^n_l$,  $l=1,\ldots,r$ are pairwise disjoint Rokhlin
towers all included in $W_n$. Hence
\begin{equation}\label{eq:vn}
Leb\Big(\bigcup_{l=1}^rV^n_l\Big)=r(\la^{m+r_n}_d-\la^{m+r_n}_1)(s_1^n+s_d^n).
\end{equation}
Note that the points that do not belong to $W_n$ come from three
sources, namely: the tower of height $s^n_1$ built over
$I_1^{m+r_n}\setminus J^n$, the towers of height $s^n_j$ built
over the intervals $I_j^{m+r_n}$ for $j=2,\ldots,d-1$ and the
tower of height $s^n_d$ built over $I_d^{m+r_n}\setminus
T^{s_1^n}J^n$. From \eqref{miarajn}, we have
\begin{align*}Leb(I_1^{m+r_n}\setminus
J^n)&=|I_1^{m+r_n}|-|J^n|=2(\la^{m+r_n}_d-\la^{m+r_n}_1), \\
Leb(I_d^{m+r_n}\setminus
T^{s_1^n}J^n)&=|I_d^{m+r_n}|-|J^n|=3(\la^{m+r_n}_d-\la^{m+r_n}_1)
\end{align*}
and, by \eqref{szacwiez}, the sum of lengths of intervals $I_j^{m+r_n}$, $j=2,\ldots,d-1$
is
\[|I^{m+r_n}|-\la^{m+r_n}_d-\la^{m+r_n}_1<(\de-\de')|I^{m+r_n}|.\]
It follows that
\begin{align}\label{eq:wn}
Leb([0,1)\setminus
W_n)<(\la^{m+r_n}_d-\la^{m+r_n}_1)(2s_1^n+3s_d^n)+(\de-\de')|I^{m+r_n}|
\max_{1<j<d}(s_j^n).
\end{align}
Moreover, by \eqref{eq:wn}, \eqref{eq:vn}, \eqref{szacwiez1}, \eqref{ro7}, \eqref{neq:delty} and \eqref{liminf1}, we have
\begin{align*}
Leb&\Big(\bigcup_{l=1}^rV^n_l\Big)-Leb([0,1)\setminus
W_n)\\&>(\la^{m+r_n}_d-\la^{m+r_n}_1)
\Big((r-2)s_1^n+(r-3)s_d^n\Big)-(\de-\de')\max_{1<j<d}(s_j^n)|I^{m+r_n}|\\
&>\frac{\ep}{2}s_1^n|I^{m+r_n}|-(\de-\de')\max_{1<j<d}(s_j^n)|I^{m+r_n}|\\&>\Big(\frac{\ep}{2}-
(\de-\de')\rho(B)\Big)s_1^n|I^{m+r_n}|>\frac{\ep}{4}s_1^n|I^{m+r_n}|>\frac{\ep}{4\rho(B)}.
\end{align*}
Now passing to a subsequence, if necessary, by \eqref{limalfa}, we have
\begin{equation}\label{limGamma1}
\Gamma:=\lim_{n\to\infty}Leb\Big(\bigcup_{l=1}^rV^n_l\Big)>1-\alpha.
\end{equation}

Let $f:[0,1)\to\re_+$ be a piecewise constant roof function for
which $\beta_1,\ldots,\beta_r$ are all discontinuities and with
jumps equal to $d_1,\ldots,d_r$ respectively. By assumption,
$d_j\neq-d_k$ for  $1\leq j<k\leq r$.

Let $q_n'=2q_n$. Since $\{W_n\}_{n\in\n}$ is a sequence of towers
satisfying \eqref{limalfa} and \eqref{eq:szt1}, $\{W_n\}_{n\in\n}$
meet the conditions \eqref{wl1}-\eqref{wl4}. Moreover, in view of
Lemma~\ref{konst2}, there exist $\{a_n\}_{n\in\n}$ and $P\in
\mathcal{P}(\re^2)$ that meet the conditions
\eqref{wl5}-\eqref{wl7} with $a_n'=2a_n$.
In view of \eqref{eq:xip}, we have
\[\xi_*P=\lim_{n\to \infty}(f^{(q_n)}\circ T^{q_n}-f^{(q_n)})_*(Leb_{W_n}).\]

Let $x\in W_n$ and $0\leq i<q_n$. In view of \eqref{eq:latr}, $(T^ix,T^{q_n+i}x]$ is an interval of length $\lambda^{m+r_n}_d-\lambda^{m+r_n}_1$.
Since the distances between points $\beta_l$, $l=1,\ldots,r$ are greater than $\lambda^{m+r_n}_d-\lambda^{m+r_n}_1$, the interval $(T^ix,T^{q_n+i}x]$
can contain at most one point $\beta_l$. Moreover,
\[\beta_l\in (T^ix,T^{q_n+i}x]\quad\Longleftrightarrow\quad x\in T^{-i}[\beta_l-(\lambda^{m+r_n}_d-\lambda^{m+r_n}_1),\beta_l)\]
and
\[\beta_l\in (T^ix,T^{q_n+i}x]\quad\Longrightarrow\quad f(T^{q_n+i}x)-f(T^ix)=d_l.\]
If $(T^ix,T^{q_n+i}x]$ does not contain any point $\beta_l$,
$l=1,\ldots,r$, then $f(T^{q_n+i}x)-f(T^ix)=0$. Since
$f^{(q_n)}(T^{q_n}x)-f^{(q_n)}(x)=\sum_{i=0}^{q_n-1}(f(T^{q_n+i}x)-f(T^ix))$,
by the definition of sets $V^n_l\subset W_n$,  for every  $x\in
W_n$ we have
\[
f^{(q_n)}(T^{q_n}x)-f^{(q_n)}(x)=\begin{cases}
d_l&\text{  if  } x\in V^n_l\text{  for some }l=1,\ldots,r\\
0&\text{  otherwise}.
\end{cases}
\]
Thus, by \eqref{limalfa} and \eqref{limGamma1}, it follows that
$\xi_*P$ is a discrete  measure with $r$ non-zero atoms at $d_l$
for $l=1,\ldots,r$ whose total mass is equal to
$\frac{\Gamma}{\alpha}> \frac{1-\alpha}{\alpha}$. Since
$d_j\neq-d_k$ for  $1\leq j<k\leq r$, condition \eqref{neq:condme}
is valid and the proof is complete.
\end{proof}



\begin{thebibliography}{9}
\bibitem{Anz}H.\ Anzai,
\emph{On an example of a measure preserving transformation which
is not conjugate to its inverse},
Proc.\ Japan Acad.\ 27 (1951), 517-522.


\bibitem{Co-Fo-Si} I.P.\ Cornfeld, S.V.\ Fomin, Ya.G.\ Sinai,
\emph{Ergodic theory}, Springer-Verlag, New York, 1982.

\bibitem{DaRy} A.I.\ Danilenko, V.V.\ Ryzhikov,
\emph{On self-similarities of ergodic flows},
Proc.\ Lond.\ Math.\ Soc., 104 (2012), no. 3, 431-454.

\bibitem{dJun} A.\ del Junco,
\emph{Disjointness of measure-preserving transformations, minimal self-joinings and category.}
Ergodic theory and dynamical systems, I (College Park, Md., 1979-80), pp.\ 81-89,
Progr.\ Math., 10, Birkhäuser, Boston, Mass., 1981.

\bibitem{Fr}K.\ Fr\k{a}czek, \emph{Density of mild mixing property for vertical flows of Abelian differentials},
Proc.\ Amer.\ Math.\ Soc.\ 137 (2009), 4129-4142.

\bibitem{Fr-Ku-Le} K.\ Fr\k{a}czek,\ J. Ku\l aga,\ M. Lema\'nczyk,
\emph{Non-reversibility
and self-joinings of higher orders for ergodic flows},
\url{arXiv:1206.3053}, accepted for publication in Journal
d'Analyse Mathematique.

\bibitem{FrLem3} K.\ Fr\k{a}czek, M.\ Lema\'nczyk,
\emph{On the self-similarity problem for ergodic flows},
Proc.\ Lond.\ Math.\ Soc.\ 99 (2009),  658-696.

\bibitem{FrLem2} K.\ Fr\k{a}czek, M.\ Lema\'nczyk,
\emph{A class of special flows over irrational rotations which is disjoint from mixing flows},
Ergodic Theory Dynam.\ Systems 24 (2004), 1083-1095.

\bibitem{FrLem} K.\ Fr\k{a}czek, M.\  Lema\'nczyk,
\emph{On disjointness properties of some smooth flows},
Fund.\ Math.\ 185 (2005),  117-142.

\bibitem{Glas} E.\ Glasner,
\emph{Ergodic theory via joinings},
Mathematical Surveys and Monographs, 101.
American Mathematical Society, Providence, RI, 2003.



\bibitem{Hal-vNeu}P.R.\ Halmos, J.\ von Neumann,
\emph{Operator methods in classical mechanics. II},
Ann.\ of Math.\ (2) 43 (1942), 332-350.

\bibitem{Katok} A.\ Katok,
\emph{Interval exchange transformations and some special flows are not mixing},
Israel J.\ Math.\ 35 (1980), 301-310.


\bibitem{Keane} M.\ Keane,
\emph{Interval exchange transformations},
Math.\ Z.\ 141 (1975), 25-31.

\bibitem{King}J.\ King,
\emph{Joining-rank and the structure of finite rank mixing transformations},
J.\ Analyse Math.\ 51 (1988), 182-227.

\bibitem{Kul}J.\ Ku\l aga,
\emph{On the self-similarity problem for smooth flows on orientable surfaces},
Ergodic Theory Dynam.\ Systems 32 (2012), 1615-1660.


\bibitem{Ra}G.\ Rauzy,
\emph{Échanges d'intervalles et transformations induites},
Acta Arith.\ 34 (1979), 315-328.

\bibitem{Ryz1} V.V.\ Ryzhikov,
\emph{Partial multiple mixing on subsequences can distinguish between automorphisms $T$ and $T^{-1}$},
Math.\ Notes 74 (2003), 841-847

\bibitem{Veech0} W.\ Veech,
\emph{Interval exchange transformations}, J.\ Analyse Math.\ 33 (1978), 222-272.

\bibitem{Veech1} W.\ Veech,
\emph{Projective Swiss cheeses and uniquely ergodic interval exchange
transformations}, Ergodic theory and dynamical systems, I (College
Park, Md., 1979-80), pp.\ 113-193, Progr.\ Math., 10, Birkhäuser,
Boston, Mass., 1981.

\bibitem{Veech2} W.\ Veech,
\emph{Gauss measures for transformations on the space of interval
exchange maps}, Ann.\ of Math.\ (2) 115 (1982), 201-242.

\bibitem{Veech3} W.\ Veech,
\emph{The metric theory of interval exchange
transformations. I. Generic spectral properties},
Amer.\ J.\ Math.\ 106 (1984), 1331-1359.

\bibitem{Viana} M.\ Viana,
\emph{Ergodic theory of interval exchange maps},
Rev.\ Mat.\ Complut.\ 19 (2006), 7-100.
\end{thebibliography}
\end{document}